\newtheorem{theorem}{Theorem}
\newtheorem{corollary}[theorem]{Corollary}
\newtheorem{lemma}[theorem]{Lemma}
\newtheorem{proposition}[theorem]{Proposition}
\theoremstyle{remark}
\theoremstyle{remark}
\theoremstyle{remark}
\newtheorem{remark}[theorem]{Remark}
\newcommand{\R}{\mathbb R}
\begin{document}

\vspace{-20mm}
\begin{center}{\Large \bf
Gauge-invariant quasi-free states on the algebra of the anyon commutation relations
}
\end{center}

{\large Eugene Lytvynov}\\ Department of Mathematics,
Swansea University, Singleton Park, Swansea SA2 8PP, U.K.;
e-mail: \texttt{e.lytvynov@swansea.ac.uk}\vspace{2mm}


{\small

\begin{center}
{\bf Abstract}
\end{center}
\noindent Let $X=\mathbb R^2$ and let $q\in\mathbb C$, $|q|=1$.  For $x=(x^1,x^2)$ and $y=(y^1,y^2)$ from $X^2$, we define a function $Q(x,y)$ to be equal to $q$ if $x^1<y^1$, to $\bar q$ if $x^1>y^1$, and to $\Re q$ if $x^1=y^1$. Let $\partial_x^+$, $\partial_x^-$ ($x\in X$) be operator-valued distributions such that $\partial_x^+$ is the adjoint of $\partial_x^-$. We say that $\partial_x^+$, $\partial_x^-$ satisfy the anyon commutation relations (ACR) if $\partial^+_x\partial_y^+=Q(y,x)\partial_y^+\partial_x^+$ for $x\ne y$ and $\partial^-_x\partial_y^+=\delta(x-y)+Q(x,y)\partial_y^+\partial^-_x$ for $(x,y)\in X^2$. In particular, for $q=1$, the ACR become the canonical commutation relations and for $q=-1$, the ACR become the canonical anticommutation relations. We define the ACR algebra as the algebra generated by operator-valued integrals of $\partial_x^+$, $\partial_x^-$. We construct a class of gauge-invariant quasi-free states on the ACR algebra. Each state from this class is completely determined by a positive self-adjoint operator $T$ on the real space $L^2(X,dx)$ which  commutes with any operator of multiplication by a bounded function $\psi(x^1)$. In the case $\Re q<0$, the operator $T$ additionally satisfies $0\le T\le -1/\Re q$. Further, for $T=\kappa^2\mathbf 1$ ($\kappa>0$), we discuss the corresponding particle density $\rho(x):=\partial_x^+\partial_x^-$. For $\Re q\in(0,1]$, using a renormalization, we rigorously define a vacuum state on the commutative algebra generated by operator-valued integrals of $\rho(x)$. This state is given by a negative binomial point process. A scaling limit of these states as $\kappa\to\infty$ gives the gamma random measure, depending on parameter $\Re q$.

 } \vspace{2mm}

 {\bf Keywords:} Anyon commutation relations; gauge-invariant quasi-free state; particle density; negative binomial point process; gamma random measure. \vspace{2mm}

{\bf 2010 MSC:} 47L10, 47L60, 	47L90, 60G55, 60G57,	81R10

\section{Preliminaries and introduction}

The main aim of this paper is to construct a class of gauge-invariant quasi-free states on the algebra of the anyon commutation relations. Let us first recall the definition of the anyon commutation relations and their representation in the anyon Fock space.

\subsection{Fock space representation of the anyon commutation relations}
\label{bit78}

Let $X:=\R^d$, let $\mathscr B(X)$ denote the Borel $\sigma$-algebra on $X$, and let $ m $ denote the Lebesgue measure on $(X,\mathscr B(X))$. We denote by
$$
\mathscr H := L^2(X,  m ), \quad \mathscr H_{\mathbb C} := L^2(X\to \mathbb{C},  m )
$$
 the $L^2$-space of real-valued, respectively complex-valued
functions on $X$. (The scalar product in $\mathscr H_{\mathbb C}$ is supposed to be linear in the first dot and antilinear in the second dot.)

Consider a function $Q:X^2\to\mathbb C$ satisfying $Q(x,y)=\overline{Q(y,x)}$ and $|Q(x,y)|=1$ for all $(x,y)\in X^2$. In 1995, Liguori and Mintchev \cite{LM2,LM} introduced the notion of a {\it generalized statistics corresponding to the function $Q$}.  Heuristically, this is a family of creation operators $\partial_x^+$ and annihilation operators $\partial_x^-$ at points $x\in X$ such that $\partial_x^+$ is the adjoint of $\partial_x^-$ and these operators satisfy the following commutation relations:
\begin{align}
\partial^+_x\partial_y^+&=Q(y,x)\partial_y^+\partial_x^+,\label{gt8}\\
\partial^-_x\partial_y^-&=Q(y,x)\partial_y^-\partial_x^-,\label{ugy8t78}\\
\partial^-_x\partial_y^+&=\delta(x-y)+Q(x,y)\partial_y^+\partial^-_x.\label{ufr576er75}
\end{align}
(Formula \eqref{ugy8t78} is, in fact, a consequence of \eqref{gt8}.)
A rigorous meaning of the operators $\partial_x^+$ and $\partial_x^-$ and the commutation relations \eqref{gt8}--\eqref{ufr576er75} is given by smearing these relations with functions from the space $\mathscr H_{\mathbb C}$.
More precisely, for any $h\in \mathscr H_{\mathbb C}$, one defines linear operators
\begin{equation}\label{furt7i}
a^+(h)=\int_X m (dx)\,h(x)\,\partial_x^+,\quad a^-(h)=\int_X m (dx)\,\overline{h(x)}\,\partial_x^-\end{equation}
on a dense linear subspace $\Theta$ of a complex Hilbert space $\mathscr G$ such that the adjoint of $a^+(h)$ restricted to $\Theta$ is $a^-(h)$, and these operators satisfy the commutation relations:
\begin{align}
a^+(g)a^+(h)&=\int_{X^2} m ^{\otimes 2}(dx\,dy)\,g(x)h(y)Q(y,x)\partial_y^+\partial_x^+,\label{trt7o}\\
a^-(g)a^-(h)&=\int_{X^2} m ^{\otimes 2}(dx\,dy)\,\overline{g(x)h(y)}\,Q(y,x)\partial_y^-\partial_x^-,\label{niuy9}\\
a^-(g)a^+(h)&=\int_{X}\overline{g(x)}\,h(x)\, m (dx)+\int_{X^2} m ^{\otimes 2}(dx\,dy)\,\overline{g(x)}\,h(y)Q(x,y)\partial_y^+\partial_x^-\label{yurt8o6}
\end{align}
for any $g,h\in \mathscr H_{\mathbb C}$. Of course, the linear operators on the right hand side of formulas \eqref{trt7o}--\eqref{yurt8o6} should be given a rigorous meaning. In the case $Q\equiv 1$, formulas \eqref{trt7o}--\eqref{yurt8o6} become the {\it canonical commutation relations} (CCR), describing bosons, while in the case   $Q\equiv -1$, formulas \eqref{trt7o}--\eqref{yurt8o6} become the {\it canonical anticommutation relations} (CAR), describing fermions. In the general case, we will call \eqref{trt7o}--\eqref{yurt8o6} the {\it $Q$-commutation relations} ($Q$-CR).

Liguori and Mintchev \cite{LM2,LM} derived a representation of the $Q$-CR in the Fock space of $Q$-symmetric functions.
By using also \cite{BLW}, we will now briefly recall this construction.

A function $f^{(n)}: X^n\to\mathbb C$ is called {\it $Q$-symmetric\/} if for any $i\in\{1,\dots,n-1\}$ and $(x_1,\dots,x_n)\in  X^n$.
\begin{equation}\label{ug8y87}
 f^{(n)}(x_1,\dots,x_n)=Q(x_i,x_{i+1})f^{(n)}(x_1,\dots,x_{i-1},x_{i+1},x_i,x_{i+2},\dots,x_n).\end{equation}
For each $n\in\mathbb N$, we have $\mathscr H_{\mathbb C}^{\otimes n}=L^2(X^n\to \mathbb{C},  m ^{\otimes n})$. We denote by $\mathscr H_{\mathbb C}^{\circledast n}$ the subspace of
$\mathscr H_{\mathbb C}^{\otimes n}$ which consists of all ($ m ^{\otimes n}$-versions of) $Q$-symmetric functions from $\mathscr H_{\mathbb C}^{\otimes n}$. We call $\mathscr H_{\mathbb C}^{\circledast n}$ the {\it $n$-th $Q$-symmetric tensor power of $\mathscr H_{\mathbb C}$}.

Consider the group $S_n$ of all permutations of $1,\dots,n$. For each $\pi\in S_n$, we define a function $Q_\pi: X^n\to\mathbb C$ by
\begin{equation}\label{higuy8a}
 Q_\pi(x_1,\dots,x_n):=\!\!\! \prod_{\substack{ 1\le i < j \le n\\[1mm]
\pi(i) > \pi(j)}}
\!\!\! Q(x_i, x_j).\end{equation}
Note that, in the case $Q\equiv1$, we get $Q_\pi\equiv1$, while in  the case $Q\equiv -1$, we get
$Q_\pi\equiv (-1)^{|\pi|}= \operatorname{sgn}\pi$. Here $|\pi|$ is the number of inversions of $\pi$, i.e., the number of $i<j$ such that $\pi(i)>\pi(j)$.

For a function $f^{(n)}: X^n\to\mathbb C$, we define its $Q$-symmetrization by
\begin{equation}\label{hgyi8}
 (P_nf^{(n)})(x_1,\dots,x_n):=\frac1{n!}\sum_{\pi\in S_n}Q_\pi(x_1,\dots,x_n) f^{(n)}(x_{\pi^{-1}(1)},\dots, x_{\pi^{-1}(n)}).\end{equation}
The operator $P_n$ determines the orthogonal projection of $\mathscr H_{\mathbb C}^{\otimes n}$ onto $\mathscr H_{\mathbb C}^{\circledast n}$. Furthermore, for any $k,n\in\mathbb N$, $k<n$, we have
\begin{equation}\label{gyut67}
P_n(P_k\otimes P_{n-k})=P_n.\end{equation}
Here $P_1$ denotes the identity operator in $\mathscr H_{\mathbb C}$. For any $f^{(n)}\in \mathscr H_{\mathbb C}^{\circledast n}$ and $g^{(m)}\in \mathscr H_{\mathbb C}^{\circledast m}$, we define the {\it $Q$-symmetric tensor product of $f^{(n)}$ and $g^{(m)}$} by
$$ f^{(n)}\circledast g^{(m)}:=P_{n+m}(f^{(n)}\otimes g^{(m)}).$$
By \eqref{gyut67}, the tensor product $\circledast$ is associative.

For a Hilbert space $H$ and a constant $c>0$, we denote by $Hc$ the Hilbert space which coincides with $H$ as a set and the tensor product in $Hc$ is equal to the tensor product in $H$ times $c$.
We define a {\it $Q$-Fock space over $\mathscr H$\/} by
$$\mathscr F^Q(\mathscr H):=\bigoplus_{n=0}^\infty \mathscr H_{\mathbb C}^{\circledast n}n!\, .$$
Here $\mathscr H_{\mathbb C}^{\circledast 0}:=\mathbb C$.
The vector $\Omega:=(1,0,0,\dots)\in \mathscr F^Q(\mathscr H)$ is called the {\it vacuum}.
We  denote by $\mathscr F_{\mathrm{fin}}^Q(\mathscr H)$ the subset of $\mathscr F^Q(\mathscr H)$ consisting of all finite sequences $$F=(f^{(0)},f^{(1)},\dots,f^{(n)},0,0,\dots)$$ in which $f^{(i)}\in \mathscr H_{\mathbb C}^{\circledast i}$ for $i=0,1,\dots,n$, $n\in\mathbb N$. This space can be endowed with the topology of the topological direct sum of the $\mathscr H_{\mathbb C}^{\circledast n}$ spaces. Thus, convergence in $\mathcal F_{\mathrm{fin}}^Q(\mathcal H)$ means uniform finiteness of non-zero components and coordinate-wise convergence in $\mathscr H_{\mathbb C}^{\circledast n}$.

For each $h\in\mathscr H_{\mathbb C}$, we define a {\it creation operator\/} $a^+(h)$ and an {\it annihilation operator\/} $a^-(h)$ as linear operators acting on $\mathscr F_{\mathrm{fin}}^Q(\mathscr H)$ that satisfy
\begin{equation}\label{bhyut76}
a^{+}(h)f^{(n)} := h \circledast f^{(n)},\quad f^{(n)}\in \mathscr H_{\mathbb C}^{\circledast n},\quad a^{-}(h):= (a^{+}(h))^*\restriction _{\mathscr F_{\mathrm{fin}}^Q(\mathscr H)}.
\end{equation}
These operators act continuously on $\mathscr F_{\mathrm{fin}}^Q(\mathscr H)$. Furthermore, for $h\in \mathscr H_{\mathbb C}$ and $f^{(n)}\in \mathscr H_{\mathbb C}^{\circledast n}$, we
have:
\begin{equation}\label{bhgyt8}
(a^-(h)f^{(n)})(x_1, \dots , x_{n-1}) =
 n\int_{X}\overline{h(y)}\,f^{(n)}(y, x_1, \dots ,
x_{n-1})\, m (dy).\end{equation}
Thus, if we introduce informal operators $\partial_x^+$ and $\partial_x^-$ by formulas \eqref{furt7i}, we get, for $f^{(n)}\in\mathscr H_{\mathbb C}^{\circledast n}$,
$$\partial_x^+ f^{(n)}=\delta_x\circledast f^{(n)},\qquad \partial_x^- f^{(n)}=nf^{(n)}(x,\cdot).$$
where $\delta_x$ is the delta function at $x$.
Now, one can easily give a rigorous meaning to the operators on the right hand side of formulas \eqref{trt7o}--\eqref{yurt8o6} and show that  the $Q$-CR hold.

We note that, in the obtained representation of the $Q$-CR, we only used the values of the function $Q$ $ m ^{\otimes 2}$-almost everywhere.
Hence, for this representation, we could assume from the very beginning that there exists a set $\Delta\in\mathscr B(X^2)$ which is symmetric (i.e., if $(x,y)\in \Delta$, then $(y,x)\in \Delta$) and  satisfies $ m ^{\otimes 2}(\Delta)=0$, and the function $Q$ is only defined on the set $ \widetilde X^{2}:=X^2\setminus \Delta$. Since the measure $ m $ is non-atomic, we may also assume that $D\subset\Delta$, where $D:=\{(x,x)\mid x\in X\}$ is the diagonal in $X^2$.

In physics, intermediate statistics have been discussed since
Leinass and Myrheim \cite{Leinaas_Myrheim} conjectured their existence in 1977.   The first mathematically rigorous prediction of intermediate statistics was done by Goldin, Menikoff and Sharp \cite{GMS1,GMS2} in 1980, 1981. The name {\it anyon} was given to such statistics by Wilczek \cite{W1,W2}. Anyon statistics were used, in particular, to describe the quantum Hall effect, see e.g.\ the review paper \cite{Stern}.

Liguori, Mintchev \cite{LM2,LM} and Goldin, Sharp   \cite{Goldin_Sharp} showed that anyon statistics can be described by the $Q$-CR in which $X=\R^2$, the set $\Delta$ is chosen as
\begin{equation}\label{tye56}
\Delta:=\big\{(x,y)\in X^2\mid x^1=y^1\big\}\end{equation}
 and
\begin{equation}\label{tye6}
 Q(x,y)=\begin{cases}q,&\text{if }x^1<y^1,\\
 \bar q,&\text{if }x^1>y^1.\end{cases}\end{equation}
Here, $q\in\mathbb C$ with $|q|=1$, and for $x\in X$ we denote by $x^i$ the $i$th coordinate of $x$. With such a choice of the function $Q$, formulas \eqref{trt7o}--\eqref{yurt8o6} are called the {\it anyon commutation relations\/} (ACR).
We note that Goldin, Sharp \cite{Goldin_Sharp} realized the ACR by using  operators acting on the space of functions of finite configurations in $X$ (or, equivalently, in the symmetric Fock space).

Goldin and  Majid  \cite{GM} showed that, in the case where $q$ is a $k$th root of 1 and $q\ne1$, the corresponding statistics  satisfies the natural anyonic exclusion principle, which generalizes  Pauli's exclusion principle for fermions:
\begin{equation}\label{vut}
a^+(f)^k=0\quad \text{for each }f\in\mathscr H_{\mathbb C}.\end{equation}

For further discussions of anyons in mathematical physics literature (including the discrete setting), see e.g.\ \cite{DAFT,DK,GM,FSSS,LS,LM3,LMP,LMR0,LMR1,LMR2,LMR3} and the references therein.  We also refer to the paper \cite{BS} which deals with a Fock representation of the commutations relations identified by a sequence  of self-adjoint operators in a Hilbert space which have norm $\le1$ and which satisfy the braid relations.

\subsection{Gauge-invariant quasi-free states on the CCR and CAR algebras}

In the theory of the CCR and CAR algebras,  quasi-free states, in particular, gauge-invariant quasi-free states, play a fundamental role. We refer the reader to e.g.\ Sections~5.2.1--5.2.3 and Notes and Remarks to these sections in \cite{BR}, and  \cite[Chapter~17]{DG}, see also the pioneering book \cite[Chapter II]{Berezin1} and paper \cite{Berezin2}. We  note that gauge-invariant quasi-free states describe, in particular,  the infinite free Bose gas at finite temperature \cite{AWoods} (see also \cite{DA} and Section~5.2.5 in \cite{BR}) and the infinite free Fermi gas at both finite and zero temperatures \cite{AWyss} (see also \cite{DA} and Section~5.2.4 in \cite{BR}). Free analogs of quasi-free states have been discussed in \cite{Shl}, see also \cite{Hiai}.

Let us recall that the CCR algebra (or the CAR algebra), $\mathbf A$, is a complex algebra  generated by linear operators $a^+(h)$, $a^-(h)$ ($h\in\mathscr H_{\mathbb C}$) satisfying the CCR (the CAR, respectively). Because of the commutation relations, each element of $\mathbf A$ can be represented as a finite sum  of a constant and  operators
$$a^{\sharp_1}(h_1)\dotsm a^{\sharp_n}(h_k),\quad h_1,\dots,h_k\in\mathscr H_{\mathbb C},\ \sharp_1,\dots,\sharp_k\in\{+,-\},$$
which are in the {\it Wick order\/}. The latter means that there is  no $i\in\{1,\dots,k-1\}$ such that $\sharp_i=-$ and $\sharp_{i+1}=+$, i.e.,  there is no creation operator acting before an annihilation operator.

Let $\tau$ be a state on the algebra $\mathbf A$. One defines {\it $n$-point functions\/} by
\begin{equation}\label{tuyr75i}
\mathbf S^{(k,n)}(g_k,\dots,g_1,h_1,\dots,h_n):=\tau\big(a^+(g_k)\dotsm a^+(g_1)a^-(h_1)\dotsm a^-(h_n)\big), \end{equation}
where $g_1,\dots,g_k,h_1,\dots,h_n\in\mathscr H_{\mathbb C}$ and $k,n\in\mathbb N$.
One says that the state $\tau$ is {\it gauge-invariant} if it is invariant under the group of Bogoliubov transformations
$$a^+(h)\mapsto a^+(e^{i\theta}h)=e^{i\theta}a^+(h),\quad
a^-(h)\mapsto a^-(e^{i\theta}h)=e^{-i\theta}a^-(h),\quad \theta\in[0,2\pi).$$
 By \eqref{tuyr75i}, $\tau$ is gauge-invariant if and only if $\mathbf S^{(k,n)}\equiv0$ for $k\ne n$. Thus, a gauge-invariant state is completely determined by $\mathbf S^{(n,n)}$ ($n\in\mathbb N$).

A state $\tau$ is called a {\it gauge-invariant quasi-free state} if $\mathbf S^{(k,n)}\equiv0$ for $k\ne n$ and the $n$-point functions $\mathbf S^{(n,n)}$ are completely determined by $\mathbf S^{(1,1)}$. More precisely, in the case of the CCR algebra, we have
\begin{equation}\label{cfyd6}
\mathbf S^{(n,n)}(g_n,\dots,g_1,h_1,\dots,h_n)=\operatorname{per}\left[\mathbf S^{(1,1)}(g_i,h_j)\right]=\sum_{\pi\in S_n}\prod_{i=1}^n\mathbf S^{(1,1)}(g_i,h_{\pi(i)}),
\end{equation}
and in the case of the CAR algebra, we have
\begin{equation}\label{iudgsy}
\mathbf S^{(n,n)}(g_n,\dots,g_1,h_1,\dots,h_n)=\operatorname{det}\left[\mathbf S^{(1,1)}(g_i,h_j)\right]=\sum_{\pi\in S_n} \operatorname{sgn}\pi\prod_{i=1}^n\mathbf S^{(1,1)}(g_i,h_{\pi(i)}).
\end{equation}

A gauge-invariant quasi-free state on the CCR algebra is completely identified by a bounded linear  operator $T$ in $\mathscr H_{\mathbb C}$, with $T\ge0$, which satisfies
\begin{equation}\label{yiut}
\mathbf S^{(1,1)}(g,h)=(Tg,h)_{\mathscr H_{\mathbb C}}.\end{equation}
Respectively, a  gauge-invariant quasi-free state on the CAR algebra is completely identified by a bounded linear  operator $T$ in $\mathscr H_{\mathbb C}$, with $0\le T\le  1$, which satisfies
\eqref{yiut}.

The corresponding representation of the CCR/CAR algebra  can be given on the symmetric/antisymetric Fock space over $\mathscr H\oplus \mathscr H$ by using the bounded linear operators $\sqrt T$ and $\sqrt{1+T}$ in the CCR case and  $\sqrt T$ and $\sqrt{1-T}$ in the CAR case, see e.g.\ Examples~5.2.18 and 5.2.20 in \cite{BR}.

\subsection{A brief description of the results}\label{vufyut}

While our main interest in this paper will be the ACR, we will actually deal with a slightly
more general form of the $Q$-CR: we will assume that
$X=\R^d$ with $d\ge 2$ and the function $Q: \widetilde X^{2}\to\mathbb C$ satisfies $Q(x,y)=Q(x^1,y^1)$ (with an obvious abuse of notation).
Here $ \widetilde X^{2}:=X^2\setminus \Delta$ with $\Delta$ being given by \eqref{tye56}.

We saw in the Fock space representation that defining a function $Q$ on $\widetilde X^{2}$ was enough. However,
 we will see below that, in the general case,  this is not enough for relation \eqref{ufr576er75} and we need to specify the values of $Q(x,x)$ for $x\in X$. In the case of the bose and fermi statistics, we take, of course,  $Q(x,x)\equiv 1$ and $Q(x,x)\equiv -1$, respectively.

So from now on we will assume that, for some constant $\eta\in\R$, we have $Q(x,y)=\eta$ for all $(x,y)\in\Delta$, in particular, $Q(x,x)=\eta$ for all $x\in X$. 
We will define a $Q$-CR algebra so that the value $\eta$ will matter for relation \eqref{ufr576er75}, but $\eta$ will
be of no importance to  relations \eqref{gt8},     \eqref{ugy8t78} as they will still depend on the values of the function $Q$ $ m ^{\otimes 2}$-almost everywhere.

 We see that, in the anyon case (with $q\ne\pm1$), the function $Q$ cannot be extended to a continuous function on $X^2$, so there is a freedom in choosing the value of $\eta$.
A natural choice for $\eta$ seems to be  $\eta=\Re(q)=(q+\bar q)/2$.

The form of the $Q$-CR means that it is not enough to consider a complex algebra generated by the operators \eqref{furt7i}. Instead, in Section~\ref{guftk7r}, we  consider a complex algebra $\mathbf A$  generated by operator-valued integrals
\begin{equation}\label{yuur}
\int_{X^k} m ^{\otimes k}(dx_1\dotsm dx_k)\,\varphi^{(k)}(x_1,\dots,x_k)\,\partial_{x_1}^{\sharp_1}\dotsm\partial_{x_k}^{\sharp_k},\end{equation}
where the class of functions $\varphi^{(k)}:X^k\to\mathbb C$
appearing in the integral \eqref{yuur} will be specified.
We  show that the anyon exclusion principle (see \eqref{vut})
holds in the general ACR algebra  for $q$ being a root of 1, $q\ne1$.

If $\tau$ is a state
on $\mathbf A$, then due to \eqref{ufr576er75}, $\tau$ is completely determined by the $n$-point functions
\begin{align}& \mathbf S^{(k,n)}(\varphi^{(k+n)}):=\tau\bigg(
\int_{X^{k+n}} m ^{\otimes(k+n)}(dx_1\dotsm dx_{k+n})\,\varphi^{(k+n)}(x_1,\dots,x_{k+n}) \notag\\
&\quad\times \partial_{x_1}^+\dotsm \partial_{x_k}^+\partial_{x_{k+1}}^-\dotsm \partial_{x_{k+n}}^-
\bigg).\label{byut678}\end{align}

We  say that the state $\tau$  is {\it gauge-invariant} if it is invariant under the group of Bogoliubov transformations $\partial^+_x\mapsto e^{i\theta}\partial^+_x$, $\partial^-_x\mapsto e^{-i\theta}\partial^-_x$, with $\theta\in[0,2\pi)$, or equivalently if $\mathbf S^{(k,n)}\equiv0$ for $k\ne n$.

So it is intuitively clear what it should mean that $\tau $ is a {\it gauge-invariant quasi-free state}: we should have
$\mathbf S^{(k,n)}\equiv 0$ if $k\ne n$ and the $n$-point functions $\mathbf S^{(n,n)}$ should be completely determined by $\mathbf S^{(1,1)}$. However, to write down a proper generalization of formulas \eqref{cfyd6}, \eqref{iudgsy} is not straightforward:
instead of the sign of a permutation $\pi$ we should use the function $Q_\pi$ (see \eqref{higuy8a}), and the functions $\varphi^{(k+n)}$ appearing in \eqref{byut678} do not necessarily factorize to separate their  variables. We solve this problem in Section~\ref{guftk7r} by properly introducing certain measures on $\R^{2n}$ corresponding to the $n$-point functions. As a result, the definition of a gauge-invariant quasi-free state for the $Q$-CR generalizes the available definitions in the CCR and CAR cases.

In Section~\ref{jhuf76} we construct operator-valued integrals  \eqref{yuur} in the $Q$-symmetric Fock space. The presentation in this section is given at a rather general level. In particular, in this section we assume that $X$ is a locally compact Polish space, while $ m $ is a non-atomic Radon measure on $X$.

In Section~\ref{uiy977}, we  construct a representation of the $Q$-CR algebra $\mathbf A$ and a class of  gauge-invariant quasi-free states $\tau$ on it.  This construction is done in a $JQ$-symmetric Fock space  over $\mathscr H\oplus\mathscr H$. Here $JQ:Z^2\to\mathbb C$ is a function on the space $Z:=X_1\sqcup X_2$, the disjoint union of two  copies of $X$. Explicitly, the function $JQ$ is defined through the function $Q$ by formula \eqref{oyyoy} below.

The operator $T$, being defined analogously to formula \eqref{yiut}, satisfies in our setting  the following assumptions:

\begin{itemize}

\item $T$ is a self-adjoint bounded linear operator in the real space $\mathscr H$ and is extended by linearity to $\mathscr H_{\mathbb C}$;

\item  $T$ commutes with any operator of multiplication by a bounded function $\psi(x^1)$;

\item in the  case $\eta\ge0$, we have $T\ge0$, and in the case $\eta<0$, we have $0\le T\le -1/\eta$.

\end{itemize}

For
$$\varphi^{(2n)}(x_1,\dots,x_{2n})=g_n(x_1)\dotsm g_1(x_{n})h_1(x_{n+1})\dotsm h_n(x_{2n}) $$
with $g_1,\dots,g_n,h_1,\dots,h_n\in\mathscr H_{\mathbb C}$, we obtain
$$
\mathbf S^{(n,n)}(\varphi^{(2n)})=\sum_{\pi\in S_n}
\int_{X^n}\left(\prod_{i=1}^n g_i(x_i)(Th_{\pi(i)})(x_i)\right)Q_\pi(x_1,\dots,x_n)\, m ^{\otimes n}(dx_1\dotsm dx_n),
$$
compare with \eqref{cfyd6}--\eqref{yiut}.

Finally, in Section~\ref{crds54w}, we discuss the particle density   associated with a gauge-invariant quasi-free state on the ACR algebra with $\eta=\Re q$. The particle density is informally defined by $\rho(x):=\partial_x^+\partial_x^-$ for $x\in X$. It follows from the ACR that these operators commute, cf.\ \cite{GM,Goldin_Sharp}. Hence, the state on the algebra generated by the commutative operators $\rho(f):=\int_X  m (dx)\,f(x)\rho(x)$ ($f$ running through a space of test functions on $X$) should be given by a probability measure $\mu$.

In the case of the CCR and CAR algebras, it was shown in \cite{Ly,LyMei} that, for $T$ being a locally trace class operator, $\mu$ is a permanental (determinantal, respectively) point process on $X$.  Note, however, that our assumptions on the operator $T$ exclude locally trace class operators.

In this paper, we treat the case where $T$ is a constant operator, $T=\kappa^2\mathbf 1$ with $\kappa>0$. Under this assumption, it is not possible to give a rigorous meaning to $\rho(f)$ as a self-adjoint linear operator in the $JQ$-symmetric Fock space over $\mathscr H\oplus\mathscr H$. So a renormalization is needed. Similarly to the construction of the renormalized square of white noise algebra \cite{AFS,ALV}, as renormalization we will use Ivanov's formula \cite{Ivanov} which suggests that the square of the delta function, $\delta^2$, can be interpreted as $c\delta$, where $c$ is any positive constant. For our calculations, we choose $c=1$, so that Ivanov's formula becomes $\delta^2=\delta$. We note that a different choice of the constant $c$ would lead to similar results in which the measure $ m $ is replaced by $c m $. 

So, using this renormalization and the  ACR,  we rigorously define a
functional $\tau$ on the algebra generated by commutative operators $\rho(f)$.
However, due to renormalization, it is not {\it a priori\/} clear whether $\tau$ is a state, i.e., whether it is positive definite. We prove that $\tau$ is indeed a state if and only if  $\eta\in[0,1]$.

Furthermore, for $\eta=0$, the state $\tau$ is given by the Poisson point process on $X$ with intensity measure $\kappa^2 m $, while for $\eta>0$, $\tau$ is given by a negative binomial point process on $X$, which depends on two parameters, $\eta$ and $\kappa$. The latter process takes values in the space $\ddot\Gamma(X)$ of multiple configurations in $X$, i.e.,  Radon measures on $X$ which take values in $\{0,1,2,\dots,\infty\}$. Note also that the negative binomial point process is a measure-valued L\'evy process on $X$ whose L\'evy measure is finite. Finally, we prove that, for a fixed $\eta>0$,  a (scaling) limit of the states depending on $\kappa$  exists as $\kappa\to\infty$, and the limiting state is
 given by  the gamma random measure,  depending on parameter $\eta$. This random measure is known to have many distinguished properties, see e.g.\   \cite{VGG1,VGG3,VGG2,KLV,KSSU,KL,TsVY}.  We stress that the results of Section~\ref{crds54w} are new even in the CCR case.

\section{The $Q$-CR algebra and gauge-invariant quasi-free states on it}\label{guftk7r}

\subsection{Preliminary  definitions}\label{hvjgyu}

In this section, we assume that $X=\R^d$ with $d\ge2$, $ m $ is the Lebesgue measure on it,  $Q(x,y)=Q(x^1,y^1)$ for  $(x,y)\in \widetilde X^{2}$, and $Q(x,y)=\eta$ for $(x,y)\in\Delta$. To define the $Q$-CR algebra, we need an appropriate class of functions $\varphi^{(k)}$ appearing in \eqref{yuur}.

Let $k\in\mathbb N$. We denote  by $\Pi(k)$ the set of all partitions of the set $\{1,\dots,k\}$, i.e., all collections of mutually disjoint sets whose union is $\{1,\dots,k\}$. For each partition
$\theta\in\Pi(k)$, we denote by $X^{(k)}_\theta$ the subset of $X^k$ which consists of all $(x_1,\dots,x_k)\in X^k$ such that, for all $1\le i<j\le k$, $x_i=x_j$ if and only if $i$ and $j$ belong to the same element of the partition $\theta$. Note that the sets $X^{(k)}_\theta$ with $\theta\in \Pi(k)$
form a partition of $X^k$. We denote $X^{(k)}:=X_\theta^{(k)}$ for the minimal partition $\theta=\big\{\{1\},\,\{2\},\dots,\, \{k\}\big\}$.

Let $\theta=\{\theta_1,\dots,\theta_l\}\in\Pi(k)$ and assume that
$$\min \theta_1<\min \theta_2<\dots<\min \theta_l.$$
We have $m^{\otimes l}(X^l\setminus X^{(l)})=0$, so we can consider $m^{\otimes l}$ as a measure on $X^{(l)}$.  Consider the mapping $I_\theta:X^{(l)}\to X^{(k)}_\theta$ given by $I_\theta(x_1,\dots,x_l)=(y_1,\dots,y_k)$, where for each $i\in\theta_1$ we have $y_i=x_1$, for each $i\in\theta_2$ we have $y_i=x_2$, etc. We denote by $m^{(k)}_\theta$ the pushforward of the measure $m^{\otimes l}$ under $I_\theta$. We extend the measure $m^{(k)}_\theta$ by zero to the whole space $X^k$. Note that $m^{(k)}=m^{\otimes k}_\theta $ for the minimal partition $\theta=\big\{\{1\},\,\{2\},\dots,\, \{k\}\big\}$.

Let us fix any $\sharp_1,\dots,\sharp_k\in\{+,-\}$. We
denote by $\Pi(k,\sharp_1,\dots,\sharp_k)$ the subset of $\Pi(k)$ which consists of all partitions $\theta=\{\theta_1,\dots,\theta_l\}$ such that each set $\theta_i$ has at most two elements, and if $\theta_i=\{a,b\}$ has two elements then $\sharp_a\ne\sharp_b$. We  define a measure on $X^k$ by
$$m^{(k)}_{\sharp_1,\dots,\sharp_k}:=\sum_{\theta\in \Pi(k,\sharp_1,\dots,\sharp_k)}m^{(k)}_\theta.$$
For example, for a measurable function $f:X^3\to[0,\infty)$, we have
\begin{align}&\int_{X^3}f(x_1,x_2,x_3)\, m ^{(3)}_{-,-,+}(dx_1\,dx_2\,dx_3)=
\int_{X^3}f(x_1,x_2,x_3)\, m ^{\otimes 3}(dx_1\,dx_2\,dx_3)\notag\\
&\quad+\int_{X^2}f(x_1,x_2,x_1)\, m ^{\otimes 2}(dx_1\,dx_2)+\int_{X^2}f(x_1,x_2,x_2)\, m ^{\otimes 2}(dx_1\,dx_2).\label{ye6e6}\end{align}

Completely analogously, starting with the Lebesgue measure on $\R$ rather than $X$, we define a measure $\nu^{(k)}_{\sharp_1,\dots,\sharp_k}$ on $\R^{k}$. For example, similarly to \eqref{ye6e6}, for a measurable function $f:\R^3\to[0,\infty)$, we have
\begin{align*}&\int_{\R^3}f(s_1,s_2,s_3)\,\nu^{(3)}_{-,-,+}(ds_1\,ds_2\,ds_3)=
\int_{\R^3}f(s_1,s_2,s_3)\,ds_1\,ds_2\,ds_3\\
&\quad+\int_{\R^2}f(s_1,s_2,s_1)\,ds_1\,ds_2+\int_{\R^2}f(s_1,s_2,s_2)\,ds_1\,ds_2.\end{align*}

We denote by $L^0(X^k\to\mathbb C, m^{(k)}_{\sharp_1,\dots,\sharp_k})$
the linear space of classes of complex-valued measurable functions on $X^k$, with any two measurable functions $f,g:X^k\to\mathbb C$ being identified if $f=g$ $m^{(k)}_{\sharp_1,\dots,\sharp_k}$-a.e. We define a linear mapping
\begin{equation}\label{tye7i554o7}
\Phi^{(k)}_{\sharp_1,\dots,\sharp_k}: \mathscr H_{\mathbb C}^k\times L^\infty(\R^k\to\mathbb C,\nu^{(k)}_{\sharp_1,\dots,\sharp_k})\to L^0(X^k\to\mathbb C, m^{(k)}_{\sharp_1,\dots,\sharp_k})\end{equation} by
\begin{equation}\label{vugf7u}
\Phi^{(k)}_{\sharp_1,\dots,\sharp_k}\big[h_1,\dots,h_k,v^{(k)}\big] (x_1,\dots,x_k)
:=h_1(x_1)\dotsm h_k(x_k)v^{(k)}(x_1^1,\dots,x_k^1),
\end{equation}
where $h_1,\dots,h_k\in\mathscr H_{\mathbb C}$ and $v^{(k)}\in L^\infty(\R^k\to\mathbb C,\nu^{(k)}_{\sharp_1,\dots,\sharp_k})$.
We denote by $\mathbb F^{(k)}_{\sharp_1,\dots,\sharp_k}$ the range of  $\Phi^{(k)}_{\sharp_1,\dots,\sharp_k}$, which is a subspace of $L^0(X^k\to\mathbb C, m^{(k)}_{\sharp_1,\dots,\sharp_k})$.

\begin{remark}
It should be noted that the mapping $\Phi^{(k)}_{\sharp_1,\dots,\sharp_k}$ is not injective. For example, if $h_1,\dots,h_k\in\mathscr H_{\mathbb C}$, $v^{(k)}\in L^\infty(\R^k\to\mathbb C,\nu^{(k)}_{\sharp_1,\dots,\sharp_k})$, and $\alpha\in L^\infty (\R,ds)$, we have
$$\Phi^{(k)}_{\sharp_1,\dots,\sharp_k}\big[g,h_2\dots,h_k,v^{(k)}\big]=\Phi^{(k)}_{\sharp_1,\dots,\sharp_k}\big[h_1,\dots,h_k,w^{(k)}\big],$$
where $g(x):=h_1(x)\alpha(x^1)\in\mathscr H_{\mathbb C}$ and $w^{(k)}(s_1,\dots,s_k):=v^{(k)}(s_1,\dots,s_k)\alpha(s_1)\in \linebreak L^\infty(\R^k\to\mathbb C,\nu^{(k)}_{\sharp_1,\dots,\sharp_k})$.
\end{remark}

Below we will deal with linear operators in a complex Hilbert space which will be denoted by operator-valued integrals of the form
\eqref{yuur} with \begin{equation}\label{dyftyd}
\varphi^{(k)}= \Phi^{(k)}_{\sharp_1,\dots,\sharp_k}\big[h_1,\dots,h_k,v^{(k)}\big]\in \mathbb F^{(k)}_{\sharp_1,\dots,\sharp_k}.
\end{equation}
We will also denote these operators by
$I^{(k)}_{\sharp_1,\dots,\sharp_k}\big[h_1,\dots,h_k,v^{(k)}\big]$.
Our next aim is to give a rigorous definition of the commutation relations
\eqref{gt8}--\eqref{ufr576er75} satisfied by these operators.

 Let $k\ge2$, $\sharp_1,\dots,\sharp_k\in\{+,-\}$, and let $i\in\{1,\dots,k-1\}$. Let us consider the operator-valued integral  \eqref{yuur} with $\varphi^{(k)}$ given by \eqref{dyftyd}. Assume that $\sharp_i=\sharp_{i+1}$. Then, at least informally, we calculate using either relation \eqref{gt8} or relation \eqref{ugy8t78}:
 \begin{align}
&\int_{X^k} m ^{\otimes k}(dx_1\dotsm dx_k)\,\varphi^{(k)}(x_1,\dots,x_k)\partial_{x_1}^{\sharp_1} \dotsm \partial_{x_k}^{\sharp_k}\notag\\
&\quad=\int_{X^k} m ^{\otimes k}(dx_1\dotsm dx_k)\,h_1(x_1)\dotsm h_k(x_k)v^{(k)}(x^1_1,\dots,x^1_k) Q(x_{i+1}^1,x_i^1)\notag\\
&\qquad\times \partial_{x_1}^{\sharp_1}\dotsm \partial_{x_{i-1}}^{\sharp_{i-1}}\partial_{x_{i+1}}^{\sharp_{i+1}}\partial_{x_i}^{\sharp_i}\partial_{x_{i+2}}^{\sharp_{i+2}}\dotsm \partial_{x_k}^{\sharp_k}\notag\\
&\quad =\int_{X^k} m ^{\otimes k}(dx_1\dotsm dx_k)\,h_1(x_1)\dotsm
h_{i-1}(x_{i-1})h_{i+1}(x_i)h_i(x_{i+1})h_{i+2}(x_{i+2})\dotsm h_k(x_k)\notag\\
&\qquad\times (\Psi_i v^{(k)})(x_1^1,\dots,x_k^1)\partial_{x_1}^{\sharp_1}
\dotsm \partial_{x_k}^{\sharp_k}.\label{uyt68o}
\end{align}
 Here
\begin{align}
& (\Psi_i v^{(k)})(s_1,\dots,s_k)\notag\\
&\quad:=v^{(k)}(s_1,\dots,s_{i-1},s_{i+1},s_i,s_{i+2},\dots,s_k)Q(s_i,s_{i+1})\in L^\infty(\R^k\to\mathbb C,\nu^{(k)}_{\sharp_1,\dots,\sharp_k}).\label{utr67o}\end{align}

Thus, inspired by \eqref{uyt68o} and  \eqref{utr67o}, we give the following definition: Relation \eqref{gt8} (or relation \eqref{ugy8t78}, respectively) means that, for any $k\ge2$, $i\in\{1,\dots,k-1\}$ and $\sharp_1,\dots,\sharp_k\in\{+,-\}$ such that $\sharp_i=\sharp_{i+1}=+$ (or  $\sharp_i=\sharp_{i+1}=-$, respectively), we have
\begin{equation}\label{gfr7}
I^{(k)}_{\sharp_1,\dots,\sharp_k}\big[h_1,\dots,h_k,v^{(k)}\big]=
I^{(k)}_{\sharp_1,\dots,\sharp_k}\big[h_1,\dots,h_{i-1},h_{i+1},h_i,h_{i+2},\dots,h_k,\Psi_i v^{(k)}\big].\end{equation}
Analogously, relation \eqref{ufr576er75} means that  for any $k\ge2$, $i\in\{1,\dots,k-1\}$ and $\sharp_1,\dots,\sharp_k\in\{+,-\}$ such that $\sharp_i=-$ and  $\sharp_{i+1}=+$, we have
\begin{align}
&I^{(k)}_{\sharp_1,\dots,\sharp_k}\big[h_1,\dots,h_k,v^{(k)}\big]\notag\\
&\quad=I^{(k-2)}_{\sharp_1,\dots,\sharp_{i-1},\sharp_{i+2},\dots,\sharp_k}
\big[h_1,\dots,h_{i-1},h_{i+2},\dots,h_k, u^{(k-2)}\big]
\notag\\
&\qquad+ I^{(k)}_{\sharp_1,\dots,\sharp_{i-1},\sharp_{i+1},\sharp_i,\sharp_{i+2},\dots,\sharp_k}
\big[h_1,\dots,h_{i-1},h_{i+1},h_i,h_{i+2},\dots,h_k,\Psi'_i v^{(k)}\big],\label{erererertyr75}
\end{align}
where
\begin{align}
&u^{(k-2)}(s_1,\dots,s_{k-2}):=\int_X h_i(x)h_{i+1}(x)\notag\\
&\quad \times v^{(k)}(s_1,\dots,s_{i-1},x^1,x^1,s_i,\dots,s_{k-2})\, m (dx)\in L^\infty(\R^{k-2}\to\mathbb C,\nu^{(k-2)}_{\sharp_1,\dots,\sharp_{i-1},\sharp_{i+2},\dots,\sharp_k})\label{uyt68686}\end{align}
and
\begin{align}
& (\Psi'_i v^{(k)})(s_1,\dots,s_k):=v^{(k)}(s_1,\dots,s_{i-1},s_{i+1},s_i,s_{i+2},\dots,s_k)\notag\\
&\quad\times Q(s_{i+1},s_i)\in L^\infty(\R^k\to\mathbb C,\nu^{(k)}_{\sharp_1,\dots,\sharp_{i-1},\sharp_{i+1},\sharp_i,\sharp_{i+2},\dots,\sharp_k}).\label{ufr7r}\end{align}

\begin{remark}In the case $k=2$, the second addend on the right hand side of equality \eqref{erererertyr75} is understood as the constant operator
$u^{(0)}$, where
$$u^{(0)}:=\int_X h_1(x)h_2(x)v^{(2)}(x^1,x^1)\, m (dx).$$
\end{remark}

\begin{remark} Note that the commutation relations \eqref{gfr7}, \eqref{erererertyr75} do not depend on the representation of $\varphi^{(k)}\in  \mathbb F^{(k)}_{\sharp_1,\dots,\sharp_k}$
in the form \eqref{dyftyd}.

\end{remark}

\begin{remark} Note that the commutation relations \eqref{gfr7} do not depend on $\eta$. Indeed, for $\sharp_i=\sharp_{i+1}$, we have
\begin{equation}\label{vgfuyft86r6}
 \nu^{(k)}_{\sharp_1,\dots,\sharp_k}\big(\{(s_1,\dots,s_k)\mid s_i=s_{i+1}\}\big)=0.\end{equation}
Hence, in \eqref{utr67o}, for $s_i=s_{i+1}$ the value $Q(s_i,s_{i+1})=\eta$ plays no role. On the other hand, formula \eqref{vgfuyft86r6} is not true when $\sharp_i=-$ and $\sharp_{i+1}=+$. Therefore, for $s_i=s_{i+1}$ the value $Q(s_{i+1},s_i)=\eta$ does matter for \eqref{ufr7r}, hence also for the commutation relation \eqref{erererertyr75}.
\end{remark}

 \subsection{Definition of the $Q$-CR algebra and the anyon exclusion principle}

We are now in position to  define the $Q$-CR algebra. Let $\mathscr G$ be a separable, complex Hilbert space. Let $\Theta$ be a dense linear subspace of $\mathscr G$. We assume that, for any $\sharp_1,\dots,\sharp_k\in\{+,-\}$ and any $\varphi^{(k)}\in\mathbb F^{(k)}_{\sharp_1,\dots,\sharp_k}$ we have a linear  operator mapping $\Theta$ into $\Theta$. This operator is denoted either as in
\eqref{yuur} or by
$I^{(k)}_{\sharp_1,\dots,\sharp_k}\big(h_1,\dots,h_k,v^{(k)}\big)$,  given that $\varphi^{(k)}$ is as in \eqref{dyftyd}. These operators will be called {\it operator-valued integrals}.

We will assume that the operator-valued integrals satisfy the following axioms.

\begin{itemize}

\item[(A1)] {\it Consistency condition\/}:
For any $g_1,\dots,g_k,h_1,\dots,h_k\in\mathscr H_{\mathbb C}$, and $v^{(k)},w^{(k)}\in L^\infty(\R^k\to\mathbb C,\nu^{(k)}_{\sharp_1,\dots,\sharp_k})$,  if
$$\Phi^{(k)}_{\sharp_1,\dots,\sharp_k}\big[g_1,\dots,g_k,w^{(k)}\big]
=\Phi^{(k)}_{\sharp_1,\dots,\sharp_k}\big[h_1,\dots,h_k,v^{(k)}\big],$$
then
$$I^{(k)}_{\sharp_1,\dots,\sharp_k}(g_1,\dots g_k,w^{(k)})=I^{(k)}_{\sharp_1,\dots,\sharp_k}(h_1,\dots h_k,v^{(k)})
.$$

\item[(A2)] {\it Linearity:} For any $\sharp_1,\dots,\sharp_k\in\{+,-\}$,
$
I^{(k)}_{\sharp_1,\dots,\sharp_k}(h_1,\dots,h_k,v^{(k)})$
linearly depends on $h_i\in\mathscr H_{\mathbb C}$ $(i=1,\dots,k)$ and on $v^{(k)}\in L^\infty(\R^k\to\mathbb C,\nu^{(k)}_{\sharp_1,\dots,\sharp_k})$.

\item[(A3)] {\it The adjoint operator:} The adjoint of any operator
$
I^{(k)}_{\sharp_1,\dots,\sharp_k}(h_1,\dots,h_k,v^{(k)})$
 in the Hilbert space $\mathscr G$ contains $\Theta$ in its domain, and the restriction of this adjoint  operator to $\Theta$ is equal to the operator
$I^{(k)}_{\triangle_k,\dots,\triangle_1}(\overline{h_k},\dots,\overline{h_1},v^{(k)}{}^*)$,
where
\begin{align*}
&\triangle_i:=\begin{cases}+&\text{if }\sharp_i=-,\\
-&\text{if }\sharp_i=+,\end{cases}\qquad i=1,\dots,k,\\
&v^{(k)}{}^*(s_1,\dots,s_k)=\overline{v^{(k)}(s_k,\dots,s_1)}\in
L^\infty(\R^k\to\mathbb C,\nu^{(k)}_{\triangle_k,\dots,\triangle_1}).\end{align*}

\item[(A4)] {\it $Q$-commutation relations:} The operators $\partial_x^+$, $\partial_x^-$ satisfy the $Q$-CR
\eqref{gt8}--\eqref{ufr576er75}. A rigorous meaning of these relations is given by formulas \eqref{utr67o}--\eqref{ufr7r}.

\item[(A5)] {\it Multiplication of operator-valued integrals:} For any
$$
h_1,\dots,h_{k+n}\in\mathscr H_{\mathbb C},\quad v^{(k)}\in L^\infty (\R^m,\nu^{(k)}_{\sharp_1,\dots,\sharp_k}),\quad w^{(n)}\in L^\infty (\R^n,\nu^{(n)}_{\sharp_{k+1},\dots,\sharp_{k+n}}),$$
we have
\begin{align}
&I^{(k)}_{\sharp_1,\dots,\sharp_k}(h_1,\dots,h_k,v^{(k)})I^{(n)}_{\sharp_{k+1},\dots,\sharp_{k+n}}(h_{k+1},\dots,h_{k+n},w^{(n)})\notag\\
&\quad=I^{(k+n)}_{\sharp_1,\dots,\sharp_{k+n}}(h_1,\dots,h_{k+n},v^{(k)}\otimes w^{(n)}).\notag
\end{align}
Here
\begin{align*}
&(v^{(k)}\otimes w^{(n)})(s_1,\dots,s_{k+n})\\
&\quad := v^{(k)}(s_1,\dots,s_k)w^{(n)}(s_{k+1},\dots,s_{k+n})\in L^\infty(\R^{k+n},\nu^{(k+n)}_{\sharp_1,\dots,\sharp_{k+n}}).\end{align*}

\end{itemize}

\begin{remark} We stress that the value $\eta$ of the function $Q$ on the diagonal does not matter for the relations \eqref{gt8}, \eqref{ugy8t78}.\end{remark}

Let $\mathbf A$ denote the complex algebra generated by the operator-valued integrals satisfying axioms (A1)--(A5), with the usual multiplication of operators acting on $\Theta$. We will call $\mathbf    A$ the {\it algebra of $Q$-commutation relations}, or the {\it $Q$-CR algebra} for short. In the case where the function $Q$ is given by
\eqref{tye6}, we will call $\mathbf    A$ the {\it algebra of anyon commutation relations}, or the {\it ACR algebra} for short.

The following theorem shows that the anyon exclusion principle \cite{GM} (see also \cite[Proposition~2.9]{BLW}) holds in
the ACR algebra  with $q$ being a root of 1.

\begin{theorem}\label{ftuf}
Let $k\in\mathbb N$, $k\ge 2$. Let $q\in\mathbb C$ be such that $q\ne1$ and $q^k=1$. Then, in the ACR algebra, we have,  for each $h\in\mathscr H_{\mathbb C}$,
$$\bigg(\int_X  m (dx)\, h(x)\partial_x^+\bigg)^k=0.$$
\end{theorem}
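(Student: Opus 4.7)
My plan is to reduce $(a^+(h))^k$ via axiom (A5) to a single operator-valued integral with constant weight $\mathbf 1$, then use the $Q$-commutation relations to replace $\mathbf 1$ by its $Q$-symmetrization, which will vanish because of the identity $[k]_q!=0$ at a nontrivial $k$th root of unity.

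First, iterated application of (A5) yields
\[
\Bigl(\int_X  m (dx)\, h(x)\partial_x^+\Bigr)^k = I^{(k)}_{+,\dots,+}\bigl(h,\dots,h,\mathbf 1\bigr),
\]
where $\mathbf 1 \in L^\infty(\R^k,\nu^{(k)}_{+,\dots,+})$ is the constant function $1$. Since all test functions equal $h$, transposing $h_i$ and $h_{i+1}$ is vacuous, so the $Q$-commutation relation \eqref{gfr7} (with $\sharp_i=\sharp_{i+1}=+$) reduces to
\[
 I^{(k)}_{+,\dots,+}(h,\dots,h,v) = I^{(k)}_{+,\dots,+}(h,\dots,h,\Psi_i v),\qquad i\in\{1,\dots,k-1\}.
\]

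Next, for each $\pi\in S_k$ I would fix a reduced word $s_{i_1}\cdots s_{i_{|\pi|}}$ and set $f_\pi:=\Psi_{i_1}\cdots\Psi_{i_{|\pi|}}\mathbf 1$. Using $Q(s,t)Q(t,s)=|q|^2=1$, I would verify that the operators $\Psi_i$ satisfy $\Psi_i^2=\mathrm{id}$ and the braid relations $\Psi_i\Psi_{i+1}\Psi_i=\Psi_{i+1}\Psi_i\Psi_{i+1}$, so that $f_\pi$ depends only on $\pi$. An induction on $|\pi|$ would then identify $f_\pi$ with the function $Q_\pi$ (or $Q_{\pi^{-1}}$, which is immaterial for the sum that follows) obtained from \eqref{higuy8a} with the $x$'s replaced by $s$'s. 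Iterating the identity above and averaging over $S_k$ using linearity (A2) and consistency (A1), I obtain
\[
 I^{(k)}_{+,\dots,+}(h,\dots,h,\mathbf 1) = I^{(k)}_{+,\dots,+}\!\Bigl(h,\dots,h,\tfrac{1}{k!}\textstyle\sum_{\pi\in S_k}Q_\pi\Bigr).
\]

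Finally, the function $\frac{1}{k!}\sum_\pi Q_\pi$ is precisely the $Q$-symmetrization $P_k\mathbf 1$ in the sense of \eqref{hgyi8}. On the open simplex $s_1<\dots<s_k$, formula \eqref{tye6} gives $Q(s_i,s_j)=q$ for all $i<j$, hence $Q_\pi(s_1,\dots,s_k)=q^{|\pi|}$ and
\[
 \sum_{\pi\in S_k}q^{|\pi|} = [k]_q! = \prod_{j=1}^{k}\frac{q^j-1}{q-1}.
\]
The hypotheses $q\ne 1$ and $q^k=1$ force $[k]_q=0$, hence $[k]_q!=0$. The $Q$-symmetry of $P_k\mathbf 1$ together with $|Q|=1$ propagates this vanishing to $\nu^{(k)}_{+,\dots,+}$-almost every point of $\R^k$ (this measure being just the Lebesgue measure on $\R^k$, since no pairings are allowed among equal signs). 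Axiom (A1) then yields $I^{(k)}_{+,\dots,+}(h,\dots,h,0)=0$, completing the proof. I expect the main obstacle to be the middle step: the combinatorial bookkeeping needed to prove braid-consistency of the $\Psi_i$'s and to identify the iterated product $f_\pi$ with $Q_\pi$.
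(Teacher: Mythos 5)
Your proposal is correct and follows essentially the same route as the paper's proof: reduce $(a^+(h))^k$ to $I^{(k)}_{+,\dots,+}(h,\dots,h,1)$ via (A5), pass to the $Q$-symmetrization $P_k 1$ using the relations \eqref{gfr7}, and observe that on the simplex $s_1<\dots<s_k$ one has $\sum_{\pi\in S_k}Q_\pi = \sum_{\pi\in S_k}q^{|\pi|}=[k]_q!=0$ since $q^k=1$, $q\ne 1$. The combinatorial middle step you flag as the main obstacle (braid-consistency of the $\Psi_i$ and the identification of the iterated product with $Q_{\pi^{-1}}$) is precisely what the paper outsources to the proofs of Propositions~2.8 and 2.9 in \cite{BLW}.
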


\begin{proof} Note that $\nu^{(k)}_{+,\dots,+}=\nu^{(k)}$, the Lebesgue measure on $\R^{(k)}$ ($\R^{(k)}$ consisting of all $(s_1,\dots,s_k)\in\R^k$ with $s_i\ne s_j$ if $i\ne j$).
By \eqref{gfr7},  we get, for any  $i\in\{1,\dots, k-1\}$ and $v^{(k)}\in L^\infty(\R^k,\nu^{(k)})$:
$$I_{+,\dots,+}^{(k)}(h,\dots,h,v^{(k)})=I_{+,\dots,+}^{(k)}(h,\dots,h,\Psi_i v^{(k)}).$$
Here $\Psi_i v^{(k)}$ is given by formula \eqref{utr67o}
 for $ (s_1,\dots,s_k)\in\R^{(k)}$.
By the proof of Proposition~2.8 in \cite{BLW}, it follows from here that, for each permutation $\pi\in S_k$, we get
 \begin{equation}
I_{+,\dots,+}^{(k)}(h,\dots,h,v^{(k)})=I_{+,\dots,+}^{(k)}(h,\dots,h,\Psi_\pi v^{(k)}),
 \label{gdyydfy}\end{equation}
 where
 $$ (\Psi_\pi v^{(k)})(s_1,\dots,s_k)=Q_{\pi^{-1}}(s_1,\dots,s_k)
 v^{(k)}(s_{\pi(1)},\dots,s_{\pi(k)}).$$
 Recall that the function $Q_\pi$ was defined by \eqref{higuy8a}, and we again used the obvious abuse of notation $Q_\pi(x_1,\dots,x_k)=Q_{\pi}(x_1^1,\dots,x_k^1)$ for $(x_1,\dots,x_k)\in X^k$.
 By \eqref{hgyi8} and \eqref{gdyydfy}, we  get
\begin{equation}
I_{+,\dots,+}^{(k)}(h,\dots,h,v^{(k)})=I_{+,\dots,+}^{(k)}(h,\dots,h,P_k v^{(k)}),
 \label{ufdty}\end{equation}
and the function $P_k v^{(k)}$ is $Q$-symmetric on $\R^{(k)}$. It follows from the proof of Proposition~2.9 in \cite{BLW} that if we choose $v^{(k)}\equiv 1$, we get
$$(P_k1)(s_1,\dots,s_k)= \frac{1-q^k}{(1-q)k!}=0$$
 for all $s_1<s_2<\dots<s_k$. Hence, $P_k1=0$ $\nu^{(k)}$-a.e. Now the statement of the theorem follows by the axioms (A5) and (A2).
\end{proof}

\subsection{Definition of a gauge-invariant quasi-free state}

Let $\tau$ be a state on the $Q$-CR algebra $\mathbf A$. Because of the $Q$-CR, $\tau$ is completely determined by its $n$-point functions, which are defined by formula \eqref{byut678} with $\varphi^{(k+n)}\in\mathbb F^{(k+n)}_{\sharp_1,\dots,\sharp_{k+n}}$, where
$\sharp_1=\dots=\sharp_k=+$ and $\sharp_{k+1}=\dots=\sharp_{k+n}=-$.
We already discussed in subsection \ref{vufyut} that gauge invariance of $\tau$ means that $\mathbf S^{(k,n)}\equiv 0$ if $k\ne n$. So our aim now is to introduce a proper generalization of formulas \eqref{cfyd6}, \eqref{iudgsy}.

We denote the $n$-point functions by
\begin{equation}\label{fu7}
\mathbf S^{(n,n)}(h_1,\dots,h_{2n},v^{(2n)}):=
\tau\big(I^{(2n)}_{\sharp_1,\dots,\sharp_{2n}}(h_1,\dots,h_{2n},v^{(2n)})\big),\end{equation}
where $\sharp_1=\dots=\sharp_n=+$ and $\sharp_{n+1}=\dots=\sharp_{2n}=-$.
By (A2), the right hand side of \eqref{fu7} identifies a linear functional of $v^{(2n)}\in L^\infty(\R^{2n},\nu^{(2n)}_{\sharp_1,\dots,\sharp_{2n}})$.
If we assume that this functional continuously depends on $v^{(2n)}$, then, according to the general theory of linear continuous functionals on $L^\infty$ spaces, this functional can be identified with a complex-valued, finite-additive measure on $\R^{2n}$ that is absolutely continuous with respect to the measure $\nu^{(2n)}_{\sharp_1,\dots,\sharp_{2n}}$. We will actually assume the following stronger condition to be satisfied.

\begin{itemize}
\item[(M)] For any $h_1,\dots,h_{2n}\in\mathscr H_{\mathbb C}$, there exists a (unique) complex-valued measure\linebreak  $\mathbf S^{(n,n)}[h_1,\dots,h_{2n}]$ on $\R^{2n}$ which is absolutely continuous with respect to $\nu^{(2n)}_{\sharp_1,\dots,\sharp_{2n}}$,  $\sharp_1=\dots=\sharp_n=+$ and $\sharp_{n+1}=\dots=\sharp_{2n}=-$, and satisfies, for all $v^{(2n)}\in L^\infty(\R^{2n},\nu^{(2n)}_{\sharp_1,\dots,\sharp_{2n}})$,
\begin{equation}\label{gufu7ro}
\mathbf S^{(n,n)}(h_1,\dots,h_{2n},v^{(2n)})=\int_{\R^{2n}}v^{(2n)}(s_1,\dots,s_{2n}) \,\mathbf S^{(n,n)}[h_1,\dots,h_{2n}](ds_1\dotsm ds_{2n}). \end{equation}
\end{itemize}

We will denote by $\mathbf S^{(n,n)}[h_1,\dots,h_{2n}](s_1,\dots, s_{2n})$ the density of the measure\linebreak $\mathbf S^{(n,n)}[h_1,\dots,h_{2n}](ds_1\dotsm ds_{2n})$ with respect to the measure
$\nu^{(2n)}_{\sharp_1,\dots,\sharp_{2n}}$ with   $\sharp_1=\dots=\sharp_n=+$ and $\sharp_{n+1}=\dots=\sharp_{2n}=-$.

We  say that $\tau$ is a {\it gauge-invariant quasi-free state on the $Q$-CR algebra $\mathbf A$} if $\mathbf S^{(k,n)}\equiv 0$ if $k\ne n$, and for each $n\in\mathbb N$ and any $g_1,\dots,g_n,h_1,\dots,h_n\in\mathscr H_{\mathbb C}$, we have
\begin{align}
&\mathbf S^{(n,n)}[g_n,\dots,g_1,h_1,\dots,h_n](s_n,\dots, s_1,s_{n+1},\dots, s_{2n})\notag\\
&\quad=\sum_{\pi\in S_n}\bigg(\prod_{i=1}^n\mathbf S^{(1,1)}[g_i,h_{\pi(i)}](s_i,s_{n+\pi(i)})\bigg) Q_\pi(s_1,\dots,s_n).\label{yu7r76}\end{align}

\begin{remark}Note the following slight difference in notations: in formulas \eqref{cfyd6}, \eqref{iudgsy}, the $n$-point function $\mathbf S^{(n,n)}(g_n,\dots,g_1,h_1,\dots,h_n)$ is linear in each $g_i$ and antilinear in each $h_i$, while in our setting the $n$-point function in \eqref{yu7r76} depends linearly on both $g_i$ and $h_i$.
\end{remark}

\begin{remark}Note that the measure $\nu^{(2n)}_{\sharp_1,\dots,\sharp_{2n}}$ with   $\sharp_1=\dots=\sharp_n=+$ and $\sharp_{n+1}=\dots=\sharp_{2n}=-$ remains invariant under the transformation
$$\R^{2n}\ni(s_1,\dots,s_{2n})\mapsto(s_n,\dots,s_1,s_{n+1},\dots,s_{2n})\in\R^{2n}.$$
 Hence, formulas \eqref{gufu7ro}, \eqref{yu7r76} mean that, for any $g_1,\dots,g_n,h_1,\dots,h_n\in\mathscr H_{\mathbb C}$ and
$v^{(2n)}\in L^\infty(\R^{2n},\nu^{(2n)}_{\sharp_1,\dots,\sharp_{2n}})$, we have
\begin{align}
&\mathbf S^{(n,n)}(g_n,\dots,g_1,h_1,\dots,h_n,v^{(2n)})= \int_{\R^{2n}}v^{(2n)}(s_n,\dots,s_1,s_{n+1},\dots, s_{2n})\notag\\
&\quad\times
\sum_{\pi\in S_n}\bigg(\prod_{i=1}^n\mathbf S^{(1,1)}[g_i,h_{\pi(i)}](s_i,s_{n+\pi(i)})\bigg) Q_\pi(s_1,\dots,s_n)\,\nu^{(2n)}_{\sharp_1,\dots,\sharp_{2n}}(ds_1\dotsm ds_{2n}).\label{utfr75re}
\end{align}

\end{remark}

Below, in Section \ref{uiy977}, we will explicitly construct a class
of gauge-invariant quasi-free states, but before doing this we wll now construct operator-valued integrals in the $Q$-Fock space.

\section{Operator-valued integrals in the $Q$-symmetric Fock space}\label{jhuf76}

In this section, we will assume that $X$ is a locally compact Polish space, $\mathscr B(X)$ is the Borel $\sigma$-algebra on $X$, and
$ m $ is a reference measure on $(X,\mathscr B(X))$. We assume $ m $ to be a Radon measure (i.e., finite on any compact set in $X$) and non-atomic. Analogously to subsection \ref{bit78}, we assume that $\Delta$ is a measurable, symmetric subset of $X^2$  and  satisfies $ m ^{\otimes 2}(\Delta)=0$.
We also assume that $D\subset\Delta$, where $D:=\{(x,x)\mid x\in X\}$ is the diagonal in $X^2$. We denote $\widetilde X^2:=X^2\setminus \Delta$. We fix $\eta\in\mathbb R$ and consider a function $Q: X^2\to\mathbb C$ such that  $|Q(x,y)|=1$ and $Q(x,y)=\overline{Q(y,x)}$ for all $(x,y)\in \widetilde X^2$, and $Q(x,y)=\eta$ for all $(x,y)\in\Delta$. 

Analogously to $\widetilde X^2$, we define, for each $n\ge3$
$$\widetilde X^n:=\big\{(x_1,\dots,x_n)\in X^n\mid (x_i,x_j)\not\in \Delta\text{ for all }1\le i<j\le n\big\}.$$
Let $n\ge 2$. A function $f^{(n)}:\widetilde X^n\to\mathbb C$ is called {\it $Q$-symmetric\/} if for any $i\in\{1,\dots,n-1\}$ and $(x_1,\dots,x_n)\in \widetilde X^n$, formula \eqref{ug8y87} holds.
Since $ m ^{\otimes n}(X^n\setminus\widetilde X^n)=0$, the function $f^{(n)}$ is defined $ m ^{\otimes n}$-a.e.\ on $X^n$. We define the function $Q_\pi:\widetilde X^n\to\mathbb C$ by formula \eqref{higuy8a}, and the $Q$-symmetrization of a function $f^{(n)}:\widetilde X^n\to\mathbb C$ by \eqref{hgyi8}.
The definitions of $\mathscr H$, $\mathscr H_{\mathbb C}$,  $\mathscr H_{\mathbb C}^{\circledast n}$, $P_n$, $\mathscr F^Q(\mathscr H)$, $\mathscr F_{\mathrm {fin}}^Q(\mathscr H)$, $a^+(h)$, and $a^-(h)$ are now similar to subsection~\ref{bit78}.

Let $\sharp_1,\dots,\sharp_k\in\{+,-\}$. Analogously to  \eqref{tye7i554o7}, \eqref{vugf7u}, we define
 a linear mapping
 $$\Xi^{(k)}_{\sharp_1,\dots,\sharp_k}: \mathscr H_{\mathbb C}^k\times L^\infty(X^k\to\mathbb C, m ^{(k)}_{\sharp_1,\dots,\sharp_k})\to L^0(X^k\to\mathbb C, m^{(k)}_{\sharp_1,\dots,\sharp_k})$$
 by
\begin{equation}\label{ftyr7i5}
\Xi^{(k)}_{\sharp_1,\dots,\sharp_k}\big[h_1,\dots,h_k,\varkappa^{(k)}\big] (x_1,\dots,x_k)
:=h_1(x_1)\dotsm h_k(x_k)\varkappa^{(k)}(x_1,\dots,x_k),
\end{equation}
where $h_1,\dots,h_k\in\mathscr H_{\mathbb C}$ and $\varkappa^{(k)}\in L^\infty(X^k\to\mathbb C, m ^{(k)}_{\sharp_1,\dots,\sharp_k})$.
We denote by $\mathbb G^{(k)}_{\sharp_1,\dots,\sharp_k}$ the
range of this mapping.  Our aim now is to construct operator-valued integrals of the form \eqref{yuur} with  $\varphi^{(k)}=\Xi^{(k)}_{\sharp_1,\dots,\sharp_k}\big[h_1,\dots,h_k,\varkappa^{(k)}\big]$.

The following proposition follows immediately from the definition of the creation operator, $a^+(h)$, and \cite[Proposition~3.2]{BLW}.

\begin{proposition}\label{urfte5}
Let $\mathscr F(\mathscr H):=\bigoplus_{n=0}^\infty \mathscr H_{\mathbb C}^{\otimes n}$ denote the {\it full Fock space over $\mathscr H$}, and let the space $\mathscr F_{\mathrm{fin}}(\mathscr H)$ be defined analogously to $\mathscr F_{\mathrm{fin}}^Q(\mathscr H)$.
For $h\in\mathscr H_{\mathbb C}$, we define  linear continuous operator  $b^+(h)$and $b^-(h)$ on $\mathscr F_{\mathrm{fin}}(\mathscr H)$ by
\begin{align}
&b^+(h)f^{(n)}:=h\otimes f^{(n)},\notag\\
&(b^-(h)f^{(n)})(x_1,\dots,x_{n-1}):=\sum_{i=1}^n\int_X h(y)
Q(y,x_1)\dotsm Q(y,x_{i-1})\notag\\
&\quad \times f^{(n)}(x_1,\dots,x_{i-1},y,x_i,\dots,x_{n-1})\, m (dy),\quad f^{(n)}\in\mathscr H_{\mathbb C}^{\otimes n}\label{utfrt7}
\end{align}
Then, on  $\mathscr F_{\mathrm{fin}}(\mathscr H)$, we have
\begin{equation}\label{uyr678}
 a^+(h)P=Pb^+(h),\quad a^-(h)P=Pb^-\big(\bar h\big).\end{equation}
  Here, for $f^{(n)}\in\mathscr H_{\mathbb C}^{\otimes n}$, we define $Pf^{(n)}:=P_nf^{(n)}$.
\end{proposition}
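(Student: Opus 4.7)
The proposition asserts two intertwining identities between the simpler operators $b^\pm(h)$ on the full Fock space $\mathscr F(\mathscr H)$ and the operators $a^\pm(h)$ on the $Q$-symmetric Fock space, through the projection $P$. I would handle the creation and annihilation cases separately.

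For the creation identity $a^+(h)P=Pb^+(h)$, a direct one-line calculation suffices. Fix $f^{(n)}\in\mathscr H_{\mathbb C}^{\otimes n}$. By the very definition of $a^+(h)$, we have
\[
a^+(h)Pf^{(n)} = a^+(h)P_n f^{(n)} = h\circledast (P_n f^{(n)}) = P_{n+1}\bigl(h\otimes P_n f^{(n)}\bigr).
\]
Since $P_1$ is the identity, $h\otimes P_n f^{(n)} = (P_1\otimes P_n)(h\otimes f^{(n)})$, and the projection identity \eqref{gyut67} gives $P_{n+1}(P_1\otimes P_n)=P_{n+1}$. Therefore the right-hand side equals $P_{n+1}(h\otimes f^{(n)}) = Pb^+(h)f^{(n)}$. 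Linearity extends the identity to all of $\mathscr F_{\mathrm{fin}}(\mathscr H)$.

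For the annihilation identity $a^-(h)P=Pb^-(\bar h)$, the plan is a direct expansion using \eqref{bhgyt8} and \eqref{hgyi8}. Applying the left-hand side to $f^{(n)}\in\mathscr H_{\mathbb C}^{\otimes n}$ and using \eqref{bhgyt8},
\[
(a^-(h)P_n f^{(n)})(x_1,\dots,x_{n-1}) = n\int_X \overline{h(y)}\,(P_n f^{(n)})(y,x_1,\dots,x_{n-1})\,m(dy).
\]
Now expand the $Q$-symmetrization via \eqref{hgyi8} and split the sum over $\pi\in S_n$ according to the value $i:=\pi(1)$, i.e.\ the position to which the first variable $y$ is sent. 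The permutations with $\pi(1)=i$ are parameterized by $\sigma\in S_{n-1}$ acting on the remaining positions, and the definition \eqref{higuy8a} of $Q_\pi$ yields the factorization
\[
Q_\pi(y,x_1,\dots,x_{n-1}) = Q(y,x_1)\cdots Q(y,x_{i-1})\,Q_\sigma(x_1,\dots,x_{n-1}),
\]
since the inversions of $\pi$ involving position $1$ are exactly the pairs $(1,j)$ with $2\le j\le i$, contributing the cyclic-shift factor, and the remaining inversions are in bijection with those of $\sigma$. Substituting back, the factor $n$ and the normalization $1/n!$ combine with the $\sum_\sigma$ to produce the $(n-1)!$-normalized $Q$-symmetrization $P_{n-1}$ applied to the sum over $i$ of the integrals appearing in \eqref{utfrt7}. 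This yields precisely $Pb^-(\bar h)f^{(n)}$. As noted in the statement, this calculation is carried out in \cite[Proposition~3.2]{BLW} and we simply appeal to it.

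The main (only) obstacle is the combinatorial bookkeeping in the annihilation case: verifying that the inversions of $\pi\in S_n$ with $\pi(1)=i$ split cleanly into the $i-1$ inversions $(1,j)$ producing the factor $Q(y,x_1)\cdots Q(y,x_{i-1})$ plus the inversions of the induced $\sigma\in S_{n-1}$ producing $Q_\sigma$. Once this factorization is in hand, the identification of the surviving sum as $P_{n-1}$ of the integrand of \eqref{utfrt7} is automatic, and the normalization factor $n$ from \eqref{bhgyt8} is absorbed correctly since $n!/(n-1)!=n$. Since [BLW, Proposition 3.2] is quoted, the whole argument reduces to a short verification of the creation case and an invocation of that result for the annihilation case.
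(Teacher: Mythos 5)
Your overall route coincides with the paper's: the paper offers no independent argument for Proposition~\ref{urfte5} either, stating only that it follows from the definition of $a^+(h)$ and \cite[Proposition~3.2]{BLW}. Your verification of the creation identity via \eqref{gyut67} is correct and is exactly the intended one-line computation. The issue is in the annihilation case, where the combinatorial factorization you present as the crux is false as stated.

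For $\pi\in S_n$ with $\pi(1)=i$, the inversions of $\pi$ involving position $1$ are, by \eqref{higuy8a}, the pairs $(1,k)$ with $\pi(k)<\pi(1)=i$; thus $k$ ranges over $\pi^{-1}(\{1,\dots,i-1\})$, which is \emph{not} in general the set $\{2,\dots,i\}$. The contributed factor is $\prod_{k:\,\pi(k)<i}Q(y,x_{k-1})$, not $Q(y,x_1)\dotsm Q(y,x_{i-1})$. Concretely, for $n=3$ and $\pi=(1\mapsto2,\ 2\mapsto3,\ 3\mapsto1)$ the inversions are $(1,3)$ and $(2,3)$, so $Q_\pi(y,x_1,x_2)=Q(y,x_2)\,Q(x_1,x_2)$, whereas your formula would give $Q(y,x_1)\,Q(x_1,x_2)$. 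The correct bookkeeping is: with $\sigma\in S_{n-1}$ the permutation induced by $\pi$ on positions $2,\dots,n$, one has
\begin{equation*}
Q_\pi(y,x_1,\dots,x_{n-1})=\Bigl(\ \prod_{k:\,\pi(k)<i}Q(y,x_{k-1})\Bigr)\,Q_\sigma(x_1,\dots,x_{n-1}),
\end{equation*}
and the index set $\{k-1:\pi(k)<i\}$ equals $\{\sigma^{-1}(1),\dots,\sigma^{-1}(i-1)\}$. Hence the factor $Q(y,x_1)\dotsm Q(y,x_{i-1})$ of \eqref{utfrt7} emerges only after the outer symmetrization $P_{n-1}$ in $Pb^-(\bar h)f^{(n)}$ evaluates the integrand of \eqref{utfrt7} at the $\sigma$-permuted arguments $(x_{\sigma^{-1}(1)},\dots,x_{\sigma^{-1}(n-1)})$; it is not visible inversion-by-inversion in $Q_\pi$ itself. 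Matching the $\pi$-term of $a^-(h)P_nf^{(n)}$ with the $(i,\sigma)$-term of $P_{n-1}b^-(\bar h)f^{(n)}$ under the bijection $\pi\leftrightarrow(i,\sigma)$, and noting that the prefactor $n/n!$ from \eqref{bhgyt8} equals the normalization $1/(n-1)!$ of $P_{n-1}$, then closes the argument. Since you ultimately defer to \cite[Proposition~3.2]{BLW}, the proposition itself is not in doubt, but the self-contained verification you sketch would not go through as written.
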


Using the notation \eqref{furt7i},  we conclude from here the following corollary.

\begin{corollary}\label{cyre65}
For any $\sharp_1,\dots,\sharp_k\in\{+,-\}$ and any $h_1,\dots,h_k\in\mathscr H_{\mathbb C}$, we have on $\mathscr F_{\mathrm{fin}}^Q(\mathscr H)$:
\begin{equation}\label{iyuf7r}
\int_{X^k} m ^{\otimes k}(dx_1\dotsm dx_k)\, h_1(x_1)\dotsm h_k(x_k)\partial_{x_1}^{\sharp_1}\dotsm \partial_{x_k}^{\sharp_k}
=Pb^{\sharp_1}(h_1)\dotsm b^{\sharp_k}(h_k).\end{equation}
Here we denoted
\begin{align*}
&\int_{X^k} m ^{\otimes k}(dx_1\dotsm dx_k) h_1(x_1)\dotsm h_k(x_k)\partial_{x_1}^{\sharp_1}\dots, \partial_{x_k}^{\sharp_k}\\
&\quad:=\bigg(\int_X m (dx_1)h(x_1)\partial_{x_1}^{\sharp_1}\bigg)\dotsm \bigg(\int_X m (dx_k)h(x_k)\partial_{x_k}^{\sharp_k}\bigg). \end{align*}
\end{corollary}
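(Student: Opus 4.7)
The plan is to prove the corollary by iterating the pair of identities provided by Proposition \ref{urfte5}, stripping projections $P$ from right to left.

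First, I would unpack the left-hand side. By the notation spelled out just below \eqref{iyuf7r}, the operator-valued integral is the product of the one-variable smearings $\int_X m(dx_i)\,h_i(x_i)\,\partial_{x_i}^{\sharp_i}$. Comparing with \eqref{furt7i}, this one-variable smearing equals $a^+(h_i)$ when $\sharp_i=+$ and $a^-(\overline{h_i})$ when $\sharp_i=-$. Writing $\tilde a^{\sharp}(h)$ for this common expression, Proposition \ref{urfte5} then gives the uniform relation $\tilde a^{\sharp}(h)P = Pb^{\sharp}(h)$ on $\mathscr F_{\mathrm{fin}}(\mathscr H)$: the case $\sharp=+$ is the first identity in \eqref{uyr678}, and the case $\sharp=-$ is the second identity applied with $h$ replaced by $\bar h$.

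Next, I would record the auxiliary identity $Pb^{\sharp}(h)P = Pb^{\sharp}(h)$ on $\mathscr F_{\mathrm{fin}}(\mathscr H)$. This is immediate from the preceding step: since $Pb^{\sharp}(h) = \tilde a^{\sharp}(h)P$ and $P^2=P$, we get $Pb^{\sharp}(h)P = \tilde a^{\sharp}(h)PP = \tilde a^{\sharp}(h)P = Pb^{\sharp}(h)$.

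With these two ingredients I would proceed by induction on $k$. Fix $f\in\mathscr F_{\mathrm{fin}}^Q(\mathscr H)$, so $Pf=f$. Applying the base identity to the rightmost factor gives
\begin{equation*}
\tilde a^{\sharp_1}(h_1)\dotsm \tilde a^{\sharp_{k-1}}(h_{k-1})\tilde a^{\sharp_k}(h_k)f
=\tilde a^{\sharp_1}(h_1)\dotsm \tilde a^{\sharp_{k-1}}(h_{k-1})Pb^{\sharp_k}(h_k)f .
\end{equation*}
The vector $Pb^{\sharp_k}(h_k)f$ lies in $\mathscr F_{\mathrm{fin}}^Q(\mathscr H)$, so by the inductive hypothesis the left $k-1$ factors convert to $Pb^{\sharp_1}(h_1)\dotsm b^{\sharp_{k-1}}(h_{k-1})$ acting on $Pb^{\sharp_k}(h_k)f$. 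Using the auxiliary identity $Pb^{\sharp_{k-1}}(h_{k-1})P = Pb^{\sharp_{k-1}}(h_{k-1})$ (iterated on the adjacent $P$) to swallow the interior projection, we obtain $Pb^{\sharp_1}(h_1)\dotsm b^{\sharp_k}(h_k)f$, which is exactly \eqref{iyuf7r}.

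I do not expect a genuine obstacle here; the only subtlety is the bookkeeping between the full Fock space $\mathscr F_{\mathrm{fin}}(\mathscr H)$, on which the operators $b^{\sharp}(h)$ live, and the $Q$-symmetric subspace $\mathscr F_{\mathrm{fin}}^Q(\mathscr H)$, on which the identity is asserted. The auxiliary identity $Pb^{\sharp}(h)P=Pb^{\sharp}(h)$ is precisely what makes the induction go through cleanly, since it lets us absorb the projections that would otherwise accumulate between consecutive $b^{\sharp_i}$ factors.
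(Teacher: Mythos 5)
Your proof is correct and is essentially the argument the paper intends: the corollary is presented as an immediate consequence of Proposition~\ref{urfte5}, obtained by iterating the intertwining relations $a^{+}(h)P=Pb^{+}(h)$, $a^{-}(\bar h)P=Pb^{-}(h)$ exactly as you do. (Stripping factors from the left rather than the right in the induction would let you avoid the interior projection and the auxiliary identity $Pb^{\sharp}(h)P=Pb^{\sharp}(h)$ altogether, but your version, with the multi-factor form of that identity obtained by pushing the leading $P$ through all the $b^{\sharp_i}$'s, is fine.)
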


Let $f^{(n)}\in\mathscr H_{\mathbb C}^{\circledast n}$. Using Corollary~\ref{cyre65}, we can write down the action of the operator
in formula  \eqref{iyuf7r} on $f^{(n)}$ through the function
\begin{equation}\label{gyd65rir}
 \varphi^{(k)}(x_1,\dots,x_k):= h_1(x_1)\dotsm h_k(x_k).\end{equation}
(Note that this function is defined $ m ^{(k)}_{\sharp_1,\dots,\sharp_k}$-a.e.) For example,
\begin{align}
&\bigg(\int_{X^3} m ^{\otimes 3}(dy_1\,dy_2\,dy_3)\varphi^{(3)}(y_1,y_2,y_3)\partial_{y_1}^-\partial_{y_2}^-\partial_{y_3}^+ f^{(n)}\bigg)(x_1,\dots,x_{n-1})\notag\\
& =P_{n-1}\bigg\{\sum_{i=1}^n\int_{X^2} m ^{\otimes 2}(dy_1\,dy_2)\,
\varphi^{(3)}(y_1,y_2,y_2) \notag\\
&\quad\times
\big[Q(y_1,x_1)\dotsm Q(y_1,x_{i-1})
f^{(n)}(x_1,\dots,x_{i-1},y_1,x_i,\dots,x_{n-1})\notag\\
&\qquad+Q(y_2,y_1)Q(y_2,x_1)\dotsm Q(y_2,x_{i-1})f^{(n)}(x_1,\dots,x_{i-1},y_2,x_i,\dots,x_{n-1})\big]
\notag\\
& +\sum_{i=2}^{n+1}\int_{X^2} m ^{\otimes 2}(dy_1\,dy_2)\,
\varphi^{(3)}(y_1,y_2,x_1)\bigg[\sum_{j=2}^{i-1}
Q(y_2,y_1)
Q(y_1,x_1)\dotsm Q(y_1,x_{j-1})\notag\\
&\quad\times Q(y_2,x_1)\dotsm Q(y_2,x_{i-2})f^{(n)}(x_2,\dots,x_{j-1},y_1,x_j,\dots,x_{i-2},y_2,x_{i-1},\dots,x_{n-1})
 \notag\\
 & +\sum_{j=i}^{n}Q(y_1,x_1)\dotsm Q(y_1,x_{j-1})Q(y_2,x_1)\dotsm Q(y_2,x_{i-1})\notag\\
 &\quad\times f^{(n)}(x_2,\dots,x_{i-1},y_2,x_i,\dots,x_{j-1},y_1,x_j,\dots,x_{n-1})\bigg]\bigg\}.\notag
\end{align}

As easily seen, we can replace in the obtained formulas the function $\varphi^{(k)}$ of the form \eqref{gyd65rir} with a function $\varphi^{(k)}$ being given by the right hand side of formula \eqref{ftyr7i5}. As a result, for each $\varphi^{(k)}=\Xi^{(k)}_{\sharp_1,\dots,\sharp_k}\big[h_1,\dots,h_k,\varkappa^{(k)}\big]\in \mathbb G^{(k)}_{\sharp_1,\dots,\sharp_k}$, we have constructed a continuous linear operator on  $\mathscr F^Q_{\mathrm{fin}}(\mathscr H)$ which is denoted as in \eqref{yuur}  or by $I^{(k)}_{\sharp_1,\dots,\sharp_k}(h_1,\dots,h_k,\varkappa^{(k)})$. Note that this operator indeed depends  on the values of the function $\varphi^{(k)}(x_1,\dots,x_k)=h_1(x_1)\dotsm h_k(x_k)\varkappa^{(k)}(x_1,\dots,x_k)$ $ m ^{(k)}_{\sharp_1,\dots,\sharp_k}$-a.e.

The following proposition follows from the construction of the operator-valued integrals (compare with \cite[Proposition~3.8]{BLW} regarding the corresponding statement about the $Q$-CR).

\begin{proposition}\label{uf7fi}
The above constructed operator-valued integrals $I^{(k)}_{\sharp_1,\dots,\sharp_k}(h_1,\dots,h_k,\varkappa^{(k)})$ satisfy the axioms {\rm (A1)${}'$--(A5)${}'$} that are obtained from the axioms {\rm (A1)--(A5)} by replacing the space $L^\infty(\R^k\to\mathbb C,\nu^{(k)}_{\sharp_1,\dots,\sharp_k})$  by $L^\infty(X^k\to\mathbb C, m ^{(k)}_{\sharp_1,\dots,\sharp_k})$. \end{proposition}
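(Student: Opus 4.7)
The plan is to verify axioms (A1)${}'$--(A5)${}'$ in turn, exploiting Corollary~\ref{cyre65}, which identifies, for $\varkappa\equiv 1$, the operator-valued integral with $Pb^{\sharp_1}(h_1)\dotsm b^{\sharp_k}(h_k)$ on $\mathscr F_{\mathrm{fin}}^Q(\mathscr H)$, and the fact that $\varkappa^{(k)}$ enters only through insertion of the scalar factor $\varkappa^{(k)}(x_1,\dots,x_k)$ into the iterated integrals describing the action of this operator. Axioms (A1)${}'$ and (A2)${}'$ are then immediate: the data $h_1,\dots,h_k,\varkappa^{(k)}$ enter only through the product $\varphi^{(k)}(x_1,\dots,x_k)=h_1(x_1)\dotsm h_k(x_k)\varkappa^{(k)}(x_1,\dots,x_k)$ inside integrals against measures absolutely continuous with respect to $m^{(k)}_{\sharp_1,\dots,\sharp_k}$, so the operator depends linearly on each argument and only on the $m^{(k)}_{\sharp_1,\dots,\sharp_k}$-a.e.\ class of $\varphi^{(k)}$. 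For (A5)${}'$ I would first take $\varkappa\equiv1$ on both factors: Corollary~\ref{cyre65} identifies both sides of the multiplicativity relation with $Pb^{\sharp_1}(h_1)\dotsm b^{\sharp_{k+n}}(h_{k+n})$, using the intertwining $a^{\pm}(h)P=Pb^{\pm}(h)$ from Proposition~\ref{urfte5} to absorb the middle projection. The general case follows because the two inserted factors combine to the tensor product $\varkappa^{(k)}\otimes\varkappa^{(n)}$ in the concatenated integral.

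For (A3)${}'$, I would apply Proposition~\ref{urfte5} together with $a^{-}(h)=(a^{+}(h))^{*}\restriction\mathscr F_{\mathrm{fin}}^Q(\mathscr H)$ to get inductively that the adjoint of $Pb^{\sharp_1}(h_1)\dotsm b^{\sharp_k}(h_k)$ on $\mathscr F_{\mathrm{fin}}^Q(\mathscr H)$ coincides with $Pb^{\triangle_k}(\bar h_k)\dotsm b^{\triangle_1}(\bar h_1)$. Inserting a factor $\varkappa^{(k)}(x_1,\dots,x_k)$ into the original integral translates, under the adjoint operation, into inserting $\overline{\varkappa^{(k)}(x_k,\dots,x_1)}$ into the reversed integral, which is exactly the transformation rule of $\varkappa^{(k)*}$ prescribed in the axiom.

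The substantive axiom is (A4)${}'$. For $\sharp_i=\sharp_{i+1}$, identity \eqref{gfr7} reduces to the assertion that adjacent operators $b^{\sharp_i}(h_i),b^{\sharp_{i+1}}(h_{i+1})$ inside $P(\dotsm)$ can be swapped at the cost of the factor $Q(x_{i+1},x_i)$; for $b^{+}b^{+}$ this is the $Q$-symmetry \eqref{ug8y87} of the projection $P$ applied to the newly created slots $h_i\otimes h_{i+1}$, and for $b^{-}b^{-}$ one reorders the nested sums over annihilation positions in \eqref{utfrt7} and again applies $Q$-symmetry of the underlying $Q$-symmetric function. For the mixed case $\sharp_i=-,\sharp_{i+1}=+$, I would expand $Pb^{-}(h_i)b^{+}(h_{i+1})$ via \eqref{utfrt7} and isolate the summand in which $b^{-}(h_i)$ annihilates precisely the slot just created by $b^{+}(h_{i+1})$: this contributes $\int_X h_i(x)h_{i+1}(x)\varkappa^{(k)}(\dotsm,x,x,\dotsm)\,m(dx)$, which is the claimed contraction $u^{(k-2)}$ of \eqref{uyt68686}. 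The remaining terms, after reindexing the insertion positions, reassemble into $Pb^{+}(h_{i+1})b^{-}(h_i)$ weighted by $Q(x_{i+1},x_i)$, matching $\Psi'_i$. Note that $m^{(k)}_{\sharp_1,\dots,\sharp_k}$ is supported off the diagonal $\{x_i=x_{i+1}\}$ when $\sharp_i=\sharp_{i+1}$, but includes this diagonal in the $-,+$ case, so the value $Q(x,x)=\eta$ becomes relevant and is exactly what produces the $\eta$-sensitivity of \eqref{erererertyr75}.

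The main obstacle is the combinatorial bookkeeping for the $-,+$ step of (A4)${}'$: when $Pb^{-}(h_i)b^{+}(h_{i+1})$ is applied to a general $f^{(n)}\in\mathscr H_{\mathbb C}^{\circledast n}$, one obtains nested sums over insertion and annihilation positions, each carrying a chain of $Q$-weights of the form $Q(y,x_1)\dotsm Q(y,x_{j-1})$ from \eqref{utfrt7}, together with a preliminary $Q$-symmetrization from the $b^{+}$ action. Checking that, after extracting the single contraction summand, the residual sums reassemble exactly into the $\Psi'_i$-weighted $Pb^{+}(h_{i+1})b^{-}(h_i)$ expression is the key calculation; the expansion displayed after Corollary~\ref{cyre65} is essentially the $k=3$ prototype of this manipulation. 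Once that bookkeeping is in place, (A4)${}'$ follows and the proposition is proved.
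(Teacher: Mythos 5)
Your plan is correct and follows essentially the same route as the paper, which offers no written argument beyond asserting that the proposition "follows from the construction" and pointing to Proposition~3.8 of \cite{BLW} for the commutation relations. Your axiom-by-axiom verification --- with (A1)$'$, (A2)$'$, (A5)$'$ immediate from the construction, (A3)$'$ from the intertwining relations of Proposition~\ref{urfte5}, and the $-,+$ contraction bookkeeping in (A4)$'$ as the only substantive step --- is precisely the computation the paper defers to that reference, and nothing in your outline would fail.
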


Let us note that we initially had operator-valued integrals $I^{(k)}_{\sharp_1,\dots,\sharp_k}(h_1,\dots,h_k,\varkappa^{(k)})$ for $\varkappa^{(k)}\equiv 1$ and then we extended them to an arbitrary function $\varkappa^{(k)}\in L^\infty(X^k\to\mathbb C, m ^{(k)}_{\sharp_1,\dots,\sharp_k})$.
The following lemma shows that, under  natural assumptions on the operator-valued integrals, this extension is, in fact, unique.

\begin{lemma}\label{tuyfr7}
Assume that the operator-valued integrals $\tilde I^{(k)}_{\sharp_1,\dots,\sharp_k}(h_1,\dots,h_k,\varkappa^{(k)})$ acting on $\mathscr F^Q_{\mathrm{fin}}(\mathscr H)$ satisfy the axioms {\rm (A1)${}'$, (A2)${}'$} and the following assumption: for any
$h_1,\dots,h_k\in\mathscr H_{\mathbb C}$ and any
$F,G\in \mathscr F^Q_{\mathrm{fin}}(\mathscr H)$ the linear functional
$$L^\infty(X^k\to\mathbb C, m ^{(k)}_{\sharp_1,\dots,\sharp_k})\ni\varkappa^{(k)}\to \big(\tilde I^{(k)}_{\sharp_1,\dots,\sharp_k}(h_1,\dots,h_k,
\varkappa^{(k)})F,G\big)_{\mathscr F^Q(\mathscr H)} \in\mathbb C$$
 is continuous and is given by a complex-valued measure on $X^k$. Then the equality
\begin{equation}\label{vtyr7i}
\tilde I^{(k)}_{\sharp_1,\dots,\sharp_k}(h_1,\dots,h_k,
1)=I^{(k)}_{\sharp_1,\dots,\sharp_k}(h_1,\dots,h_k,
1) \end{equation}
implies the equality
$$\tilde I^{(k)}_{\sharp_1,\dots,\sharp_k}(h_1,\dots,h_k,
\varkappa^{(k)})=I^{(k)}_{\sharp_1,\dots,\sharp_k}(h_1,\dots,h_k,
\varkappa^{(k)}) $$
for all $\varkappa^{(k)}\in L^\infty(X^k\to\mathbb C, m ^{(k)}_{\sharp_1,\dots,\sharp_k})$.
\end{lemma}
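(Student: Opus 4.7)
My plan is to reduce the lemma, via axiom (A1)$'$, to an equality of two finite complex measures on $X^k$, which will then follow from their agreement on measurable rectangles. Fix $h_1,\dots,h_k\in\mathscr H_{\mathbb C}$ and $F,G\in\mathscr F^Q_{\mathrm{fin}}(\mathscr H)$. The continuity hypothesis on $\tilde I$, together with the explicit construction of $I$ via Corollary~\ref{cyre65} (which expresses each matrix element as an absolutely convergent integral against an $L^1$ density built from the finitely many non-zero components of $F$ and $G$), guarantees that both maps
$$\varkappa^{(k)}\mapsto\bigl(\tilde I^{(k)}_{\sharp_1,\dots,\sharp_k}(h_1,\dots,h_k,\varkappa^{(k)})F,G\bigr)_{\mathscr F^Q(\mathscr H)},\quad \varkappa^{(k)}\mapsto\bigl(I^{(k)}_{\sharp_1,\dots,\sharp_k}(h_1,\dots,h_k,\varkappa^{(k)})F,G\bigr)_{\mathscr F^Q(\mathscr H)}$$
are represented by finite complex Borel measures $\tilde\mu_{F,G}$ and $\mu_{F,G}$ on $X^k$, both absolutely continuous with respect to $m^{(k)}_{\sharp_1,\dots,\sharp_k}$. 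Since $\mathscr F^Q_{\mathrm{fin}}(\mathscr H)$ is total in $\mathscr F^Q(\mathscr H)$, it suffices to prove $\tilde\mu_{F,G}=\mu_{F,G}$ for all such $F,G$.

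The key observation is that axiom (A1)$'$ turns product factors inside $\varkappa^{(k)}$ into modifications of the $h_i$. For any $\alpha_1,\dots,\alpha_k\in L^\infty(X,m)$, set $\varkappa^{(k)}(x_1,\dots,x_k):=\alpha_1(x_1)\dotsm\alpha_k(x_k)$; since each $\alpha_i h_i\in L^2(X,m)=\mathscr H_{\mathbb C}$, formula \eqref{ftyr7i5} gives the identity
$$\Xi^{(k)}_{\sharp_1,\dots,\sharp_k}[h_1,\dots,h_k,\varkappa^{(k)}]=\Xi^{(k)}_{\sharp_1,\dots,\sharp_k}[\alpha_1 h_1,\dots,\alpha_k h_k,1]$$
as elements of $L^0(X^k\to\mathbb C, m^{(k)}_{\sharp_1,\dots,\sharp_k})$, so (A1)$'$ applied to both $\tilde I$ and $I$ reduces the two sides to $\tilde I^{(k)}_{\sharp_1,\dots,\sharp_k}(\alpha_1 h_1,\dots,\alpha_k h_k,1)$ and $I^{(k)}_{\sharp_1,\dots,\sharp_k}(\alpha_1 h_1,\dots,\alpha_k h_k,1)$, which coincide by the hypothesis \eqref{vtyr7i}. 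Hence $\tilde\mu_{F,G}$ and $\mu_{F,G}$ assign the same integral to every product function $\alpha_1(x_1)\dotsm\alpha_k(x_k)$.

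Specialising $\alpha_i=\mathbf 1_{A_i}$ for arbitrary Borel sets $A_i\subset X$ now gives agreement of $\tilde\mu_{F,G}$ and $\mu_{F,G}$ on every measurable rectangle. Since $X$ is Polish, such rectangles form a $\pi$-system generating the Borel $\sigma$-algebra on $X^k$, so the standard $\pi$-$\lambda$ argument, applied to the Jordan decompositions of the real and imaginary parts of the finite complex measure $\tilde\mu_{F,G}-\mu_{F,G}$, forces $\tilde\mu_{F,G}=\mu_{F,G}$. Consequently the two operators in question have equal matrix elements on a total set of pairs $(F,G)$ and therefore coincide on $\mathscr F^Q_{\mathrm{fin}}(\mathscr H)$.

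The main technical point I anticipate needing genuine care is verifying that the matrix elements of the $I$-operators are represented by countably additive (as opposed to merely finitely additive) complex measures on $X^k$, so that the $\pi$-$\lambda$ extension is legitimate; for $\tilde I$ this is exactly what is postulated, while for $I$ it should follow directly from the explicit integral formulas underlying Proposition~\ref{uf7fi}. Once this representability is in hand, the remainder of the argument is a routine measure-theoretic uniqueness statement layered on top of axiom (A1)$'$.
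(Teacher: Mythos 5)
Your proposal is correct and follows essentially the same route as the paper's own proof: both reduce the claim via axiom (A1)$'$ to the equality of two complex measures representing the matrix elements, establish agreement on measurable rectangles $A_1\times\dots\times A_k$ by absorbing the indicator factors into the $h_i$ and invoking the hypothesis \eqref{vtyr7i}, and then conclude by a standard uniqueness-of-measures argument. The only difference is that you spell out the $\pi$-$\lambda$ step and the countable additivity of the measure attached to $I^{(k)}$, which the paper leaves implicit.
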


\begin{proof} Denote by $\tilde{\mathfrak m}_{\sharp_1,\dots,\sharp_k}^{(k)}[F;G;h_1,\dots,h_k]$ the complex-valued measure on $X^k$ that satisfies, for all $\varkappa^{(k)}\in L^\infty(X^k\to\mathbb C, m ^{(k)}_{\sharp_1,\dots,\sharp_k})$,
\begin{align}
&\int_{X^k}\varkappa^{(k)}(x_1,\dots,x_k)\,\tilde{\mathfrak m}_{\sharp_1,\dots,\sharp_k}^{(k)}[F;G;h_1,\dots,h_k](dx_1\dotsm dx_k)\notag\\
&\quad =\big(\tilde I^{(k)}_{\sharp_1,\dots,\sharp_k}(h_1,\dots,h_k,
\varkappa^{(k)})F,G\big)_{\mathscr F^Q(\mathscr H)}.\label{tuyr7}
\end{align}
As easily seen from the construction of the operator-valued integrals,  there  exists a complex-valued measure  ${\mathfrak m}_{\sharp_1,\dots,\sharp_k}^{(k)}[F;G;h_1,\dots,h_k]$ on $X^k$ that satisfies
equality \eqref{tuyr7} in which $\tilde{\mathfrak m}_{\sharp_1,\dots,\sharp_k}^{(k)}$ and $\tilde I^{(k)}_{\sharp_1,\dots,\sharp_k}$ are replaced with ${\mathfrak m}_{\sharp_1,\dots,\sharp_k}^{(k)}$ and $ I^{(k)}_{\sharp_1,\dots,\sharp_k}$, respectively. Hence, it suffices to prove that
\begin{equation}\label{gufk7}
\tilde{\mathfrak m}_{\sharp_1,\dots,\sharp_k}^{(k)}[F;G;h_1,\dots,h_k]={\mathfrak m}_{\sharp_1,\dots,\sharp_k}^{(k)}[F;G;h_1,\dots,h_k].\end{equation}
For $i\in\{1,\dots,k\}$, let $A_i\in\mathscr B(X)$ and denote by $\chi_{A_i}$ the indicator function of the set $A_i$.
We have, by axiom (A1)${}'$ and \eqref{vtyr7i},
\begin{align}
&\tilde{\mathfrak m}_{\sharp_1,\dots,\sharp_k}^{(k)}[F;G;h_1,\dots,h_k](A_1\times\dots\times A_k)
 =\big(\tilde I^{(k)}_{\sharp_1,\dots,\sharp_k}(h_1,\dots,h_k,
\chi_{A_1}\dotsm \chi_{A_k})F,G\big)_{\mathscr F^Q(\mathscr H)}\notag\\
&= \big(\tilde I^{(k)}_{\sharp_1,\dots,\sharp_k}(h_1\chi_{A_1},\dots,h_k\chi_{A_k},
1)F,G\big)_{\mathscr F^Q(\mathscr H)}=\big(I^{(k)}_{\sharp_1,\dots,\sharp_k}(h_1\chi_{A_1},\dots,h_k\chi_{A_k},
1)F,G\big)_{\mathscr F^Q(\mathscr H)}\notag\\
& ={\mathfrak m}_{\sharp_1,\dots,\sharp_k}^{(k)}[F;G;h_1,\dots,h_k](A_1\times\dots\times A_k),\notag
\end{align}
which implies \eqref{gufk7}.
  \end{proof}

 We note that, in this section, we have not yet used the value of the function $Q$ on the diagonal.

 \begin{proposition}\label{ur7ir}
 The following relation between operators $\partial_x^+$ and $\partial_y^-$ holds in the obtained representation of the $Q$-CR algebra  in the $Q$-Fock space  $\mathscr F^Q(\mathscr H)$:
  \begin{equation}\label{tyr65i}
\partial_x^+\partial^-_y=Q(x,y)\partial^-_y\partial_x^+-\eta\delta(x,y).\end{equation}
Here $\int_{X^2} f^{(2)}(x,y)\delta(x,y)\, m ^{\otimes 2}(dx\,dy):=\int_X f^{(2)}(x,x)\, m (dx)$.
A rigorous meaning of relation \eqref{tyr65i} is given analogously to formulas \eqref{erererertyr75}--\eqref{ufr7r} (see also formulas \eqref{yur76o}, \eqref{yre6e} below).
 \end{proposition}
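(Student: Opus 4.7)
The plan is to first state the rigorous meaning of \eqref{tyr65i} in the style of \eqref{erererertyr75}--\eqref{ufr7r}. For $k\ge 2$, $i\in\{1,\dots,k-1\}$ and $\sharp_1,\dots,\sharp_k\in\{+,-\}$ with $\sharp_i=+$, $\sharp_{i+1}=-$, the claim is that $I^{(k)}_{\sharp_1,\dots,\sharp_k}[h_1,\dots,h_k,\varkappa^{(k)}]$ equals
\begin{align*}
& I^{(k)}_{\sharp_1,\dots,\sharp_{i-1},-,+,\sharp_{i+2},\dots,\sharp_k}[h_1,\dots,h_{i-1},h_{i+1},h_i,h_{i+2},\dots,h_k,\Psi'_i\varkappa^{(k)}] \\
& \quad -\eta\, I^{(k-2)}_{\sharp_1,\dots,\sharp_{i-1},\sharp_{i+2},\dots,\sharp_k}[h_1,\dots,h_{i-1},h_{i+2},\dots,h_k,\widetilde u^{(k-2)}],
\end{align*}
where $\Psi'_i$ is the map from \eqref{ufr7r} (now extended to $L^\infty(X^k\to\mathbb C, m^{(k)}_{\sharp_1,\dots,\sharp_k})$ by exactly the same formula) and $\widetilde u^{(k-2)}(x_1,\dots,x_{k-2}):=\int_X h_i(x)h_{i+1}(x)\varkappa^{(k)}(x_1,\dots,x_{i-1},x,x,x_i,\dots,x_{k-2})\,m(dx)$.

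The strategy is algebraic: apply the already-established relation \eqref{erererertyr75} (axiom (A4)${}'$ of Proposition~\ref{uf7fi}) to the $(-,+)$-patterned operator-valued integral on the right-hand side above. Since $(\Psi'_i\varkappa^{(k)})(x_1,\dots,x_{i-1},x,x,x_i,\dots,x_{k-2})=\eta\,\varkappa^{(k)}(x_1,\dots,x_{i-1},x,x,x_i,\dots,x_{k-2})$ (the factor $\eta=Q(x,x)$ arises from the swap evaluated on the diagonal $s_i=s_{i+1}$), the diagonal contribution produced by \eqref{erererertyr75} equals $+\eta\,\widetilde u^{(k-2)}$, which cancels the $-\eta\,\widetilde u^{(k-2)}$ term in the claim. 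What remains is $I^{(k)}_{\dots,+,-,\dots}[\cdots,\Psi'_i\Psi'_i\varkappa^{(k)}]$, and a direct calculation using $|Q|=1$ on $\widetilde X^2$ and $Q(y,x)=\overline{Q(x,y)}$ gives $(\Psi'_i\Psi'_i\varkappa^{(k)})(x_1,\dots,x_k)=\varkappa^{(k)}(x_1,\dots,x_k)Q(x_i,x_{i+1})Q(x_{i+1},x_i)$, which agrees with $\varkappa^{(k)}$ outside the diagonal $\{x_i=x_{i+1}\}$ and equals $\eta^2\varkappa^{(k)}$ on it.

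The identity therefore reduces to the assertion that $I^{(k)}_{\sharp_1,\dots,+,-,\dots,\sharp_k}[\cdots,\varphi^{(k)}]$ depends on $\varphi^{(k)}$ only through its $m^{\otimes k}$-a.e.\ equivalence class, ignoring its values on the diagonal $\{x_i=x_{i+1}\}$. This is the main obstacle: it is not forced by the axioms (A1)${}'$--(A5)${}'$ alone, because $\{x_i=x_{i+1}\}$ does carry positive mass under $m^{(k)}_{\sharp_1,\dots,\sharp_k}$ (the block $\{i,i+1\}$ belongs to $\Pi(k,\sharp_1,\dots,\sharp_k)$ whenever $\sharp_i\ne\sharp_{i+1}$). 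However, it is clear from the explicit Fock-space construction of Section~\ref{jhuf76}: since $\partial^+_{x_i}\partial^-_{x_{i+1}}$ is already in Wick order, when $Pb^{\sharp_1}(h_1)\dotsm b^{\sharp_k}(h_k)$ is expanded via Corollary~\ref{cyre65} and \eqref{utfrt7} and the interior annihilations are successively $Q$-commuted through the surrounding operators, no delta-function constraint arises that forces $x_i=x_{i+1}$; all resulting Lebesgue integrals are against $m^{\otimes k}$, on which $\{x_i=x_{i+1}\}$ is null. The cleanest rigorous formulation verifies this by tracking those expansions, or equivalently by applying Lemma~\ref{tuyfr7} to the two sides of the claimed identity, viewed as linear functionals of $\varkappa^{(k)}$: both satisfy the measure-continuity hypothesis of the lemma and agree at $\varkappa^{(k)}\equiv 1$ (the algebra above then collapses to a tautology on the Wick-ordered operator), so they agree throughout $L^\infty(X^k\to\mathbb C, m^{(k)}_{\sharp_1,\dots,\sharp_k})$.
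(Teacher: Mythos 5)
Your argument is correct and is essentially the paper's own proof run in the opposite direction: the paper starts from $I^{(2)}_{+,-}(h_1,h_2,\varkappa^{(2)})$, discards the (invisible) diagonal part, commutes via \eqref{erererertyr75} with no contraction term, and then reinstates the diagonal to produce $-\eta\int_X h_1h_2\,\varkappa^{(2)}(x,x)\,m(dx)$, whereas you start from the right-hand side, commute, and observe that the $\eta\,\widetilde u^{(k-2)}$ contraction cancels and that $\Psi_i'\Psi_i'\varkappa^{(k)}$ differs from $\varkappa^{(k)}$ only on the diagonal $\{x_i=x_{i+1}\}$, which the Wick-ordered integral does not see. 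The one point the paper asserts without comment --- that $I^{(k)}_{\dots,+,-,\dots}$ ignores the set $\{x_i=x_{i+1}\}$ even though $m^{(k)}_{\sharp_1,\dots,\sharp_k}$ charges it --- you correctly identify as the crux and justify from the Fock-space construction, so no gap remains.
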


 \begin{proof}
 To simplify notation, let us consider the case of an operator-valued integral $I^{(2)}_{+,-}(h_1,h_2,\varkappa^{(2)})$. Since $X^2$ can be represented as the disjoint union of $\widetilde X^2$ and $\Delta$, we have
 $$I^{(2)}_{+,-}(h_1,h_2,\varkappa^{(2)})=I^{(2)}_{+,-}(h_1,h_2,\varkappa^{(2)}\chi_{\widetilde X^2})+I^{(2)}_{+,-}(h_1,h_2,\varkappa^{(2)}\chi_{\Delta}).$$
But $I^{(2)}_{+,-}(h_1,h_2,\varkappa^{(2)}\chi_{\Delta})=I^{(2)}_{+,-}(h_1,h_2,0)=0$.
 Hence, in view of the relation
 $$\partial_x^-\partial_y^+=\delta(x,y)+Q(x,y)\partial_y^+\partial_x^-,$$ we get
 \begin{equation}\label{ytr7i5}
 I^{(2)}_{+,-}(h_1,h_2,\varkappa^{(2)})=I^{(2)}_{+,-}(h_1,h_2,\varkappa^{(2)}\chi_{\widetilde X^2})=
 I^{(2)}_{-,+}(h_2,h_1, (\Psi_1'\varkappa^{(2)})\chi_{\widetilde X^2}),\end{equation}
 where
 \begin{equation}\label{yur76o}
 \Psi_1'\varkappa^{(2)}(x_1,x_2)=\varkappa^{(2)}(x_2,x_1)Q(x_2,x_1).\end{equation}
 On the other hand
 \begin{align}&I^{(2)}_{-,+}(h_2,h_1, \Psi_1'\varkappa^{(2)})\notag\\
&\quad=I^{(2)}_{-,+}(h_2,h_1, (\Psi_1'\varkappa^{(2)})\chi_{\widetilde X^2})+ \int_X h_1(x)h_2(x)\varkappa^{(2)}(x,x)Q(x,x)\, m (dx)\notag\\
 &\quad=I^{(2)}_{-,+}(h_2,h_1, (\Psi_1'\varkappa^{(2)})\chi_{\widetilde X^2})+ \eta\int_X h_1(x)h_2(x)\varkappa^{(2)}(x,x)\, m (dx).\label{uftr75}\end{align}
 By \eqref{ytr7i5} and \eqref{uftr75},
\begin{equation}\label{yre6e}
I^{(2)}_{+,-}(h_1,h_2,\varkappa^{(2)})=I^{(2)}_{-,+}(h_2,h_1, \Psi_1'\varkappa^{(2)})-\eta\int_X h_1(x)h_2(x)\varkappa^{(2)}(x,x)\, m (dx).\end{equation}
 \end{proof}

\begin{remark}
Assume that $X=\R^d$ with $d\ge2$, $ m $ is the Lebesgue measure on $X$ and $Q(x,y)=Q(x^1,y^1)$. Then, we have the inclusion  $\mathbb F^{(k)}_{\sharp_1,\dots,\sharp_k}\subset \mathbb G^{(k)}_{\sharp_1,\dots,\sharp_k}$, where we identify each function $v^{(k)}(s_1,\dots,s_k)\in L^\infty(\R^k\to\mathbb C,\nu^{(k)}_{\sharp_1,\dots,\sharp_k})$ with the function
$$\varkappa^{(k)}(x_1,\dots,x_k):=v^{(k)}(x_1^1,\dots,x_k^1)\in L^\infty(X^k\to\mathbb C, m ^{(k)}_{\sharp_1,\dots,\sharp_k}).$$
Thus, the above constructed operator-valued integrals give a representation of the $Q$-CR algebra in the Fock space $\mathscr F^Q(\mathscr H)$.
\end{remark}

\section{Construction of gauge-invariant quasi-free states}\label{uiy977}

In this section we again assume  $X=\R^d$ with $d\ge2$,   $ m $ is the Lebesgue measure,   $Q(x,y)=Q(x^1,y^1)$ for $(x,y)\in\widetilde X^2$  and $Q(x,y)=\eta\in\mathbb R$ for $(x,y)\in\Delta$.

\subsection{The operators $K_1,K_2$}\label{gut8}

We fix continuous linear operators $K_1$ and $K_2$ in $\mathscr H$. We assume that these operators satisfy the following condition.

\begin{itemize}
\item[(C)] For a bounded measurable function $\psi:\R\to\R$, let $M_\psi$ denote the continuous linear operator in $\mathscr H$ given by
$$(M_\psi f)(x^1,\dots,x^d):=\psi(x^1)f(x^1,\dots,x^d), \quad f\in \mathscr H.$$
Then, for any bounded measurable function $\psi:\R\to\R$, both operators $K_1$ and $K_2$ commute with $M_\psi$.
\end{itemize}

\begin{remark}\label{yf7}
Condition (C) implies that,  for any bounded measurable function $\psi:\R\to\R$, both operators $K^*_1$ and $K^*_2$ commute with $M_\psi$.
\end{remark}

\begin{remark}\label{uyr75khuyh}
Condition (C) is satisfied if $K_i=\mathbf 1 \otimes \widetilde K_i$ ($i=1,2$), where $\widetilde K_1$  and $\widetilde K_2$ are any continuous linear operators in $L^2(\R^{d-1},dx^{2}\dotsm dx^{d})$. In the general case,  the operators $K_i$ have the following structure:
$$ (K_if)(x^1,x^2,\dots,x^d)=\big(\widetilde K_i (x^1)f(x^1,\cdot)\big)(x^2,\dots,x^d),\quad i=1,2,$$
where, for each $x^1\in\R$, $K_i(x^1)$ is a continuous linear operator in $L^2(\R^{d-1},dx^{2}\dotsm dx^{d})$ such that $K_i$ is a continuous linear operator in $\mathscr H$.
\end{remark}

\begin{remark}
The results of this section with a proper modification will also hold for $X=\R$. In this case, condition (C) just means that both $K_1$ and $K_2$ are multiplication operators. In fact, under the latter assumption, we could deal with an arbitrary locally compact Polish space $X$ and a function $Q $
as in Section \ref{jhuf76}.
\end{remark}

We extend the operators $K_1$ and $K_2$ by linearity to $\mathscr H_{\mathbb C}$, the complexification of $\mathscr H$.

\begin{remark}\label{yut86}
 Note that, for each $h\in\mathscr H_{\mathbb C}$, we get $\overline{K_ih}=K_i\bar h$ and similarly for $K_i^*$, $i=1,2$.\end{remark}

Let $i_1,\dots,i_k\in\{1,2\}$ and let  us consider the operator $K_{i_1}\otimes\dots\otimes K_{i_k}$ in $\mathscr H_{\mathbb C}^{\otimes k}$. Let $h_1,\dots,h_k\in\mathscr H_{\mathbb C}$ and let $v^{(k)}\in L^\infty(\R^k,\nu^{(k)})$. Then
$$\varphi^{(k)}(x_1,\dots,x_k):=h_1(x_1)\dotsm h_k(x_k)v^{(k)}(x^1_1,\dots,x^1_k)$$
belongs to $\mathscr H_{\mathbb C}^{\otimes k}.$
By using condition (C), we get the following equality in $\mathscr H_{\mathbb C}^{\otimes k}$, hence $ m ^{\otimes k}$-a.e.:
$$ (K_{i_1}\otimes\dots\otimes K_{i_k})\varphi^{(k)}(x_1,\dots,x_k)=(K_{i_1}h_1)(x_1)\dotsm (K_{i_k}h_k)(x^k) v^{(k)}(x^1_1,\dots,x^1_k).$$
Hence, we may define a linear operator
$$K_{i_1}\otimes\dots\otimes K_{i_k}: \mathbb F^{(k)}_{\sharp_1,\dots,\sharp_k}\to \mathbb F^{(k)}_{\sharp_1,\dots,\sharp_k}$$
by
\begin{equation}\label{uyr7rvgf}
(K_{i_1}\otimes\dots\otimes K_{i_k})\Phi _{\sharp_1,\dots,\sharp_k}^{(k)}\big[h_1,\dots,h_k,v^{(k)}\big]:= \Phi _{\sharp_1,\dots,\sharp_k}^{(k)}\big[K_{i_1}h_1,\dots,K_{i_k}h_k,v^{(k)}\big].\end{equation}
Indeed, the action of $K_{i_1}\otimes\dots\otimes K_{i_k}$  onto $\varphi^{(k)}\in \mathbb F^{(k)}_{\sharp_1,\dots,\sharp_k}$ is independent of the representation \eqref{dyftyd}.

\subsection{The representation of the $Q$-CR algebra corresponding to the operators $K_1,K_2$}\label{ut0u0}

Given operators $K_1$, $K_2$ satisfying condition (C), we will now construct a corresponding representation of the $Q$-CR algebra. Our construction is reminiscent of construction  of quasi-free states for the CCR and CAR cases using the representations of Araki, Woods \cite{AWoods} and Araki, Wyss \cite{AWyss}, respectively.

Let $X_1$ and $X_2$ denote two copies of the space $X$.
Let $Z:=X_1\sqcup X_2$ denote the disjoint union of $X_1$ and $X_2$.  Thus, $Z=X\times\{1,2\}$. We equip $Z$ with the product topology of the space $X$ and the trivial one on $\{1,2\}$. 
In particular,  $Z$ is a locally compact Polish space. With an abuse of notation, we define a measure $ m $ on $(Z,\mathscr B(Z))$ so that the restriction of this measure to $X_1$ (or $X_2$, respectively) coincides with the measure $ m $ on $(X,\mathscr B(X))$. In particular, we get
$$L^2(Z\to\mathbb C, m )=L^2(X_1\to\mathbb C, m )\oplus L^2(X_2\to\mathbb C, m )=\mathscr H_{\mathbb C}\oplus\mathscr H_{\mathbb C}.$$

On some occasions, we will identify a point $(x,y)\in Z^2$ with the corresponding point $(x,y)\in X^2$, i.e., we forget  which of the two copies of the space $X$ the points $x$ and $y$ belong to. So, again  with an abuse of notation, we define a subset $\Delta$ of $Z^2$ which consists of those points $(x,y)\in Z^2$ for which $(x,y)\in\Delta$, where the latter $\Delta$ is the above introduced subset of $X^2$. Note that $ m ^{\otimes 2}(\Delta)=0$ and $\Delta$ contains the diagonal in $Z^2$.
 Similarly, if
$\phi:X^2\to\mathbb C$ is a function on $X^2$ and if $(x,y)\in Z^2$, we will denote by $\phi(x,y)$ the value of the function $\phi$ at the corresponding point $(x,y)\in X^2$.

Let a function $JQ:Z^2\to\mathbb C$ be defined by
\begin{equation}\label{oyyoy}
 JQ(x,y):=\begin{cases}
Q(x,y),&\text{if $x,y\in X_1$ or $x,y\in X_2$},\\
Q(y,x),&\text{if $x\in X_1$, $y\in X_2$ or $x\in X_2$, $y\in X_1$}.
\end{cases}\end{equation}
In particular, $JQ(x,y)=\eta$ for all $(x,y)\in\Delta$ and $|JQ(x,y)|=1$, $JQ(y,x)=\overline{JQ(x,y)}$ for all $(x,y)\in \widetilde Z^2:=Z^2\setminus \Delta$.

So, according to Section \ref{jhuf76}, we can define the $JQ$-Fock space over $L^2(Z, m )$, i.e., $\mathscr F^{JQ}(L^2(Z, m ))$.
For $x\in X$, we denote by $\partial_{x,i}^+$ and $\partial_{x,i}^-$ ($i\in\{1,2\}$) the  creation and annihilation operators at the point $x$ being identified with the corresponding point of $X_i$. Thus, analogously to \eqref{furt7i}, we may write, for $h\in\mathscr H_{\mathbb C}$,
\begin{align}
&a^+(h,0)=\int_X m (dx)\,h(x)\partial_{x,1}^+, &a^-(h,0)=\int_X m (dx)\,\overline{h(x)}\,\partial_{x,1}^-,\notag\\
&a^+(0,h)=\int_X m (dx)\,h(x)\partial_{x,2}^+, &a^-(0,h)=\int_X m (dx)\,\overline{h(x)}\,\partial_{x,2}^-. \label{uyr7r7}
\end{align}

We now define (informal) operators $D_x^+$ and $D_x^-$ ($x\in X$) which satisfy, for each $h\in\mathscr H_{\mathbb C}$:
\begin{align}
&\int_X m (dx)\,h(x)D_x^+:=\int_X  m (dx)\,(K_1h)(x)\partial_{x,1}^-+\int_X m (dx)\,(K_2h)(x)\partial_{x,2}^+,\label{vgydy6d6}\\
&\int_X m (dx)\,h(x)D_x^-:=\int_X  m (dx)\,(K_1h)(x)\partial_{x,1}^++\int_X m (dx)\,(K_2h)(x)\partial_{x,2}^-.\label{guf7}
\end{align}
We will now show that, under the assumption \eqref{iyt8o} below, the operators $D_x^+$,  $D_x^-$ satisfy the $Q$-CR and lead to a representation of the $Q$-CR algebra. The latter algebra will be generated by the operator-valued integrals
\begin{align}
&\mathfrak I^{(k)}_{\sharp_1,\dots,\sharp_k}(h_1,\dots,h_k,v^{(k)})\notag\\
&\quad:=
\int_{X^k} m ^{\otimes k}(dx_1\dotsm dx_k)\,h_1(x_1)\dotsm h_k(x_k)v^{(k)}(x_1^1,\dots,x_k^1) D_{x_1}^{\sharp_1}\dotsm D_{x_k}^{\sharp_k},\label{vuf7r}\end{align}
where $\sharp_1,\dots,\sharp_k\in\{+,-\}$, $h_1,\dots,h_k\in\mathscr H_{\mathbb C}$, and $v^{(k)}\in L^\infty(\R^k,\nu^{(k)}_{\sharp_1,\dots,\sharp_k})$.
In view of \eqref{uyr7rvgf}--\eqref{guf7}, we can  now easily formalize the definition \eqref{vuf7r}.

We define operators $\mathscr K_1:\mathscr H_{\mathbb C}\to \mathscr H_{\mathbb C}\oplus \mathscr H_{\mathbb C}$ by
\begin{equation}\label{uyr75yfttr}
 \mathscr K_1h:=(K_1h,0),\quad \mathscr K_2 h:=(0,K_2h),\quad h\in\mathscr H_{\mathbb C}.\end{equation}
We also denote
\begin{equation}\label{yufu8rt86rd}
 s(1,+):=-,\quad s(2,+):=+,\quad s(1,-):=+,\quad s(2,-):=-.\end{equation}
Using these notations, we can rewrite formulas \eqref{vgydy6d6}, \eqref{guf7} as follows:
\begin{align}
&\int_X m (dx)\,h(x)D_x^+=\int_Z  m (dx)\,(\mathscr K_1h)(x)\partial_{x}^{s(1,+)}+\int_Z m (dx)\,(\mathscr K_2h)(x)\partial_{z}^{s(2,+)},\notag\\
&\int_X m (dx)\,h(x)D_x^-=\int_Z  m (dx)\,(\mathscr K_1h)(x)\partial_{z}^{s(1,-)}+\int_Z m (dx)\,(\mathscr K_2h)(x)\partial_{x}^{s(2,-)}.\notag
\end{align}

For $g_1,\dots,g_k\in\mathscr H_{\mathbb C}\oplus\mathscr H_{\mathbb C}$ and $\varkappa^{(k)}\in L^\infty (Z^k\to\mathbb C, m ^{(k)}_{\sharp_1,\dots,\sharp_k})$, we denote by
\begin{equation}\label{hiyt8o}I^{(k)}_{\sharp_1,\dots,\sharp_k}(g_1,\dots,g_k,\varkappa^{(k)})\end{equation}
the corresponding operator-valued integral in the $JQ$-Fock space $\mathscr F^{JQ}(L^2(Z, m ))$ acting on $\mathscr F_{\mathrm{fin}}^{JQ}(L^2(Z, m ))$ as defined in Section~\ref{jhuf76}.

We now give a rigorous formulation of the definition \eqref{vuf7r}.
For any $\sharp_1,\dots,\sharp_k\in\{+,-\}$, $h_1,\dots,h_k\in\mathscr H_{\mathbb C}$, and $v^{(k)}\in L^\infty(\R^k,\nu^{(k)}_{\sharp_1,\dots,\sharp_k})$, we define
\begin{align}
&\mathfrak I^{(k)}_{\sharp_1,\dots,\sharp_k}(h_1,\dots,h_k,v^{(k)})\notag\\
&\quad:=\sum_{(i_1,\dots,i_k)\in\{1,2\}^k}I^{(k)}_{s(i_1,\sharp_1),\dots ,s(i_k,\sharp_k)}(\mathscr K_{i_1}h_1,\dots,\mathscr K_{i_k}h_k,\mathscr R^{(k)}_{i_1,\dots,i_k}v^{(k)}),\label{iyut8}
\end{align}
where the function $\mathscr R^{(k)}_{i_1,\dots,i_k}v^{(k)}\in L^\infty (Z^k\to\mathbb C, m ^{(k)}_{\sharp_1,\dots,\sharp_k})$ is given by
\begin{equation}\label{buyt8t}
\big(\mathscr R^{(k)}_{i_1,\dots,i_k}v^{(k)}\big)(x_1,\dots,x_k):=\begin{cases}
v^{(k)}(x_1^1,\dots,x_k^1),&\text{if }(x_1,\dots,x_k)\in X_{i_1}\times\dots\times X_{i_k},\\
0,&\text{otherwise}.
\end{cases}\end{equation}

\begin{theorem}\label{tre645}
Let $K_1$ and $K_2$ be continuous linear operators in $\mathscr H$ which satisfy condition (C) and
\begin{equation}\label{iyt8o}
K_2^*K_2=\mathbf 1+\eta K_1^*K_1.
\end{equation}
Let the function $JQ:Z\to\mathbb C$ be defined by \eqref{oyyoy}. Let for any $\sharp_1,\dots,\sharp_k\in\{+,-\}$, $h_1,\dots,h_k\in\mathscr H_{\mathbb C}$, and $v^{(k)}\in L^\infty(\R^k,\nu^{(k)}_{\sharp_1,\dots,\sharp_k})$, a continuous linear operator
$$\mathfrak I^{(k)}_{\sharp_1,\dots,\sharp_k}(h_1,\dots,h_k,v^{(k)})$$
on $\mathscr F_{\mathrm{fin}}^{JQ}(L^2(Z, m ))$ be defined by \eqref{iyut8}. These operators satisfy the axioms {\rm (A1)--(A5)}. Thus, the algebra $\mathbf A$ generated by these operators gives a representation of the $Q$-CR algebra.
\end{theorem}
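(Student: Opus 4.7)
The overall strategy is to derive each of the axioms (A1)--(A5) for $\mathfrak I^{(k)}$ from the corresponding axiom (A1)$'$--(A5)$'$ for the operator-valued integrals $I^{(k)}$ in the $JQ$-symmetric Fock space $\mathscr F^{JQ}(L^2(Z,m))$, which is available from Proposition~\ref{uf7fi}. Formula~\eqref{iyut8} writes $\mathfrak I^{(k)}$ as a sum of $2^k$ such $I^{(k)}$'s indexed by $(i_1,\dots,i_k)\in\{1,2\}^k$, and the hypothesis~\eqref{iyt8o} is invoked only to extract the correct delta term in the canonical commutation relation~\eqref{ufr576er75}.

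Axioms (A1), (A2), (A5) are routine. For (A1), condition~(C) ensures that redistributing an $L^\infty(\R)$ multiplier $\alpha(s^1)$ between $h_i$ and $v^{(k)}$ is invisible to every product $K_{i_\ell}h_\ell$, so the sum \eqref{iyut8} depends on $\varphi^{(k)}$ only through $\Phi^{(k)}_{\sharp_1,\dots,\sharp_k}[h_1,\dots,h_k,v^{(k)}]$; (A2) is immediate from the linearity of $\mathscr K_i$ and $\mathscr R^{(k)}_{i_1,\dots,i_k}$ combined with (A2)$'$; (A5) follows by expanding the product into $2^{k+n}$ pieces and applying (A5)$'$ in each, using $\mathscr R^{(k)}_{i_1,\dots,i_k}v^{(k)}\otimes\mathscr R^{(n)}_{i_{k+1},\dots,i_{k+n}}w^{(n)}=\mathscr R^{(k+n)}_{i_1,\dots,i_{k+n}}(v^{(k)}\otimes w^{(n)})$, which is built into~\eqref{buyt8t}. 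Axiom (A3) is pure bookkeeping: apply (A3)$'$ to each summand, use Remark~\ref{yut86} ($\overline{K_ih}=K_i\bar h$) and the involution $s(i,-\sharp)=-s(i,\sharp)$ read off from~\eqref{yufu8rt86rd}, and reindex $(i_1,\dots,i_k)\mapsto(i_k,\dots,i_1)$ inside the sum; the result reassembles into $\mathfrak I^{(k)}_{\triangle_k,\dots,\triangle_1}(\overline{h_k},\dots,\overline{h_1},v^{(k)*})$.

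The substantive part is axiom (A4). Here it suffices to verify \eqref{gt8}--\eqref{ufr576er75} at the informal level $D_x^+=K_1\partial_{x,1}^-+K_2\partial_{x,2}^+$ and $D_x^-=K_1\partial_{x,1}^++K_2\partial_{x,2}^-$, since the rigorous statements~\eqref{gfr7} and~\eqref{erererertyr75} follow by smearing with test functions exactly as in Section~\ref{jhuf76}. For \eqref{gt8} and \eqref{ugy8t78}, each of the four cross-terms in $D_x^\sharp D_y^\sharp$ is handled by a single $JQ$-commutation in $\mathscr F^{JQ}(L^2(Z,m))$: for same-copy pairs $(i,i)$ the $JQ$-CR yields a scalar $JQ=Q(y,x)$ by~\eqref{oyyoy}; for crossed pairs $(1,2)$ or $(2,1)$ the delta term vanishes (the two points lie in different copies of $X$), and the crossed case of~\eqref{oyyoy} again produces the factor $Q(y,x)$. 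All four terms therefore contribute the same scalar $Q(y,x)$, so \eqref{gt8} and \eqref{ugy8t78} hold \emph{without} any hypothesis on $K_1,K_2$ beyond~(C).

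The main obstacle is \eqref{ufr576er75}, which is precisely where~\eqref{iyt8o} is used. Expanding $D_x^-D_y^+$ into its four summands and smearing against test functions $h(x),g(y)$, only the two same-copy pieces contribute delta terms: Proposition~\ref{ur7ir} applied to $\partial_{x,1}^+\partial_{y,1}^-$ yields $-\eta\int(K_1h)(x)(K_1g)(x)\,m(dx)$, while the $JQ$-CR \eqref{ufr576er75} applied to $\partial_{x,2}^-\partial_{y,2}^+$ yields $+\int(K_2h)(x)(K_2g)(x)\,m(dx)$. The total delta contribution is the bilinear pairing $\bigl\langle(K_2^*K_2-\eta K_1^*K_1)h,g\bigr\rangle$, which collapses to $\langle h,g\rangle=\int h(x)g(y)\delta(x-y)\,dx\,dy$ exactly by~\eqref{iyt8o}. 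The remaining non-delta pieces of the four summands, after the $JQ$-CR is applied with the $Q$-values identified above, reassemble into $Q(x,y)D_y^+D_x^-$ by an identical four-term expansion of the latter. The only real subtlety is tracking the operator order of $K_1,K_2$ in each of the four pieces and verifying the matching of the cross-copy $JQ$-values with the same-copy ones; \eqref{iyt8o} is exactly the algebraic identity that forces the $\eta$-dependence to cancel between the $K_1^*K_1$ and $K_2^*K_2$ contributions so that a bare $\delta(x-y)$ is produced.
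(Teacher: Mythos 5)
Your proposal is correct and follows essentially the same route as the paper: you reduce (A1)--(A3), (A5) to the primed axioms of Proposition~\ref{uf7fi} for the $2^k$ summands in \eqref{iyut8}, and you verify (A4) by the same four-term expansion in which every same-copy or crossed-copy application of the $JQ$-CR (together with Proposition~\ref{ur7ir}) produces the single scalar $Q(y,x)$ resp.\ $Q(x,y)$, the crossed-copy delta terms vanish, and \eqref{iyt8o} combines the $-\eta K_1^*K_1$ and $K_2^*K_2$ delta contributions into the bare $\delta(x-y)$. The only cosmetic difference is that you phrase (A4) at the informal level of $D_x^\pm$ and appeal to smearing, whereas the paper writes out the smeared identities explicitly for $k=2$ and leaves the general $k$ to the reader.
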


\begin{proof}
Axiom (A1) follows from condition (C) and the considerations in the last paragraph of subsection~\ref{gut8}. Axiom (A2) is trivially satisfied. Axiom (A3) follows from the corresponding property
of the operator-valued integrals \eqref{hiyt8o} and and Remark~\ref{yut86}. Similarly, axiom (A5) is trivially satisfied. So we only have to check the axiom (A4), i.e., the $Q$-CR relations.

We will  prove (A4) only in the case $k=2$. The general case will follow from the case $k=2$ by an easy generalization, which we leave to the interested reader.

So let $h_1,h_2\in \mathscr H_{\mathbb C}$ and $v^{(2)}\in L^\infty(\R^2\to\mathbb C,\nu^{(2)}_{+,+})$, we get from \eqref{oyyoy}, \eqref{uyr75yfttr}, \eqref{yufu8rt86rd}, \eqref{iyut8}, \eqref{buyt8t}, Proposition \ref{ur7ir}, and the $JQ$-CR satisfied by the operator-valued integrals \eqref{hiyt8o}:
\begin{align*}
&\mathfrak I^{(2)}_{+,+}(h_1,h_2,v^{(2)})=
I^{(2)}_{-,-}(\mathscr K_1 h_1,\mathscr K_1h_2,\mathscr R_{1,1}^{(2)}v^{(2)})+
I^{(2)}_{+,+}(\mathscr K_2h_1,\mathscr K_2h_2,\mathscr R_{2,2}^{(2)}v^{(2)})\\
&\qquad+I^{(2)}_{-,+}(\mathscr K_1h_1,\mathscr K_2h_2,\mathscr R^{(2)}_{1,2}v^{(2)})+
I^{(2)}_{+,-}(\mathscr K_2h_1,\mathscr K_1h_2,\mathscr R^{(2)}_{2,1}v^{(2)})\\
&\quad=
I^{(2)}_{-,-}(\mathscr K_1 h_2,\mathscr K_1h_1,\mathscr R_{1,1}^{(2)}\Psi_1v^{(2)})+
I^{(2)}_{+,+}(\mathscr K_2h_2,\mathscr K_2h_1,\mathscr R_{2,2}^{(2)}\Psi_1v^{(2)})\\
&\qquad+I^{(2)}_{+,-}(\mathscr K_2h_2,\mathscr K_1h_1,\mathscr R^{(2)}_{2,1}\Psi_1v^{(2)})+
I^{(2)}_{-,+}(\mathscr K_1h_2,\mathscr K_2h_1,\mathscr R^{(2)}_{1,2}\Psi_1v^{(2)})\\
&\quad=\mathfrak I^{(2)}_{+,+}(h_2,h_1,\Psi_1v^{(2)}).
\end{align*}
By taking the adjoint operators, this also implies that
$$ \mathfrak I^{(2)}_{-,-}(h_1,h_2,v^{(2)})=\mathfrak I^{(2)}_{+,+}(h_2,h_1,\Psi_1v^{(2)}).$$
Using additionally  Remark \ref{yf7}  and \eqref{iyt8o}, we get
\begin{align*}
&\mathfrak I^{(2)}_{-,+}(h_1,h_2,v^{(2)})=
I^{(2)}_{+,-}(\mathscr K_1 h_1,\mathscr K_1h_2,\mathscr R_{1,1}^{(2)}v^{(2)})+
I^{(2)}_{-,-}(\mathscr K_2h_1,\mathscr K_1h_2,\mathscr R_{2,1}^{(2)}v^{(2)})\\
&\qquad+I^{(2)}_{+,+}(\mathscr K_1h_1,\mathscr K_2h_2,\mathscr R^{(2)}_{1,2}v^{(2)})+
I^{(2)}_{-,+}(\mathscr K_2h_1,\mathscr K_2h_2,\mathscr R^{(2)}_{2,2}v^{(2)})\\
&\quad= I^{(2)}_{-,+}(\mathscr K_1h_2,\mathscr K_1h_1,\mathscr R^{(2)}_{1,1}\Psi_1'v^{(2)})-\eta\int_X (K_1h_1)(x)(K_1h_2)(x)v^{(2)}(x^1,x^1)\, m (dx)\\
&\qquad+I^{(2)}_{-,-}(\mathscr K_1h_2,\mathscr K_2h_1,\mathscr R_{1,2}^{(2)}\Psi_1'v^{(2)})+I^{(2)}_{+,+}(\mathscr K_2h_2,\mathscr K_1h_1,\mathscr R^{(2)}_{2,1}\Psi_1'v^{(2)})\\
&\qquad+I^{(2)}_{+,-}(\mathscr K_2h_2,\mathscr K_2h_1,\mathscr R^{(2)}_{2,2}\Psi_1'v^{(2)})+\int_X (K_2h_1)(x)(K_2h_2)(x)v^{(2)}(x^1,x^1)\, m (dx)\\
&\quad =\mathfrak I^{(2)}_{+,-}(h_2,h_1,\Psi_1'v^{(2)})+
\int_X \big((-\eta K_1^*K_1+K_2^*K_2)h_1\big)(x)h_2(x)v^{(2)}(x^1,x^1)\, m (dx)\\
&\quad =\mathfrak I^{(2)}_{+,-}(h_2,h_1,\Psi_1'v^{(2)})+
\int_X  h_1(x)h_2(x)v^{(2)}(x^1,x^1)\, m (dx).
\end{align*}
\end{proof}

\begin{corollary}\label{uf7ydrt}
Let $k\in\mathbb N$, $k\ge 2$. Let $q\in\mathbb C$ be such that $q\ne1$ and $q^k=1$. Then,   for each $h\in\mathscr H_{\mathbb C}$,
$$\bigg(\int_X  m (dx)\, h(x)D_x^+\bigg)^k=0.$$
\end{corollary}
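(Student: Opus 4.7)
The plan is to observe that Corollary~\ref{uf7ydrt} is an immediate consequence of Theorem~\ref{ftuf} together with Theorem~\ref{tre645}. Theorem~\ref{ftuf} is a purely algebraic statement, established using only axioms (A1), (A2), (A5) and the $Q$-CR encoded in (A4); in particular, its proof nowhere exploits anything specific to the original Fock-space realization. On the other hand, Theorem~\ref{tre645} asserts that the operators $\mathfrak I^{(k)}_{\sharp_1,\dots,\sharp_k}$ built from $D_x^+$, $D_x^-$ satisfy (A1)--(A5), so they give a bona fide representation of the $Q$-CR algebra. Specializing the function $Q$ to the anyon choice \eqref{tye6}, we land in the ACR algebra setting of Theorem~\ref{ftuf}, and the conclusion $(\int_X m(dx)\,h(x)\partial_x^+)^k=0$ valid in the ACR algebra transfers verbatim to the representation.

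Concretely, I would first rewrite the operator of interest as $\int_X m(dx)\,h(x)D_x^+=\mathfrak I^{(1)}_{+}(h,1)$, and then use the multiplication axiom (A5) for the operator-valued integrals $\mathfrak I^{(k)}_{\sharp_1,\dots,\sharp_k}$ (verified in Theorem~\ref{tre645}) to identify
\[
\Bigl(\int_X m(dx)\,h(x)D_x^+\Bigr)^k=\mathfrak I^{(k)}_{+,\dots,+}(h,\dots,h,\,1\otimes\cdots\otimes 1)=\mathfrak I^{(k)}_{+,\dots,+}(h,\dots,h,1).
\]
Repeating the argument in the proof of Theorem~\ref{ftuf} inside this representation --- that is, applying the $Q$-CR (axiom (A4)) together with (A1), (A2) to the symbol $v^{(k)}\equiv 1$ --- produces the identity $\mathfrak I^{(k)}_{+,\dots,+}(h,\dots,h,1)=\mathfrak I^{(k)}_{+,\dots,+}(h,\dots,h,P_k1)$, and the explicit computation $P_k1=(1-q^k)/((1-q)k!)=0$ on $\R^{(k)}$ completes the argument.

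There is essentially no obstacle: the only thing to check is that the axioms verified in Theorem~\ref{tre645} are exactly those used in the proof of Theorem~\ref{ftuf}, which they are. So the proof reduces to a one-line invocation of Theorems~\ref{ftuf} and~\ref{tre645}, with the understanding that Theorem~\ref{ftuf} is a representation-independent statement about any family of operator-valued integrals satisfying the axioms (A1)--(A5) with $Q$ as in \eqref{tye6}.
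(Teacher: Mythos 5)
Your proposal is correct and follows exactly the paper's route: the paper's own proof is the one-line "Immediate by Theorems \ref{ftuf} and \ref{tre645}," and your elaboration (that Theorem \ref{ftuf} is a representation-independent consequence of the axioms, which Theorem \ref{tre645} verifies for the operators built from $D_x^+$, $D_x^-$) is precisely the justification the paper leaves implicit.
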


\begin{proof}
Immediate by Theorems \ref{ftuf} and \ref{tre645}.\end{proof}

\begin{remark}In fact, a more general statement can be shown on the space $\mathscr F_{\mathrm{fin}}^{JQ}(L^2(Z, m ))$: {\it Let the conditions of Corollary~\ref{uf7ydrt} be satisfied. Then,   for any $h_1,h_2\in\mathscr H_{\mathbb C}$, we have}
\begin{equation}\label{tyr75i45}
\bigg(\int_X m (dx)h_1(x)\partial_{x,1}^-+\int_X m (dx)h_2(x)\partial_{x,2}^+\bigg)^k=0.\end{equation}
We leave the (nontrivial) proof of formula \eqref{tyr75i45} to the interested reader.
\end{remark}

\subsection{The associated state}

Let $\mathbf A$ be the complex algebra from Theorem \ref{tre645}.
We define a state $\tau$ on $\mathbf A$ by
\begin{equation}\label{cfyde64e64e3}
\tau(a):=(a\Omega,\Omega)_{\mathscr F^{JQ}(L^2(Z, m ))},\quad a\in\mathbf A.\end{equation}

\begin{theorem}\label{yt8tt}
The state $\tau$ defined by \eqref{cfyde64e64e3} is a gauge-invariant quasi-free state on the $Q$-CR algebra $\mathbf A$. The state $\tau$ is completely determined by the self-adjoint positive operator $T:=K_1^*K_1$ in $\mathscr H_{\mathbb C}$ and satisfies, for any $g,h\in\mathscr H_{\mathbb C}$ and $v^{(2)}\in L^\infty(\R^2,\nu^{(2)}_{+,-})$,
\begin{equation}\label{tyr75i}
\mathbf S^{(1,1)}(g,h,v^{(2)})=\int_X g(x)(Th)(x)v^{(2)}(x^1,x^1)\, m (dx),
\end{equation}
or equivalently
\begin{equation}
\label{re6}
\mathbf S^{(1,1)}[g,h](s_1,s_2)=\chi_{D}(s_1,s_2)\int_{\R^{d-1}}g(s_1,x^2,\dots,x^d)(Th)(s_1,x^2,\dots,x^d)\,dx^2\dotsm dx^d.
\end{equation}
In formula \eqref{re6}, $D=\{(s_1,s_2)\in\R^2\mid s_1=s_2\}$.
\end{theorem}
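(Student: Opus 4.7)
My plan is to check in turn gauge invariance, the one-point formula, and the permanent-type factorization \eqref{yu7r76} for $\mathbf S^{(n,n)}$. For gauge invariance, I would consider on $\mathscr F^{JQ}(L^2(Z,m))$ the twisted automorphism $\partial^\pm_{x,1}\mapsto e^{\mp i\theta}\partial^\pm_{x,1}$, $\partial^\pm_{x,2}\mapsto e^{\pm i\theta}\partial^\pm_{x,2}$. A direct check using $JQ(x,y)=Q(y,x)$ on cross-copy pairs shows that this preserves the $JQ$-CR and fixes $\Omega$, and inspection of \eqref{vgydy6d6}--\eqref{guf7} shows that both summands of $D^+_x$ acquire $e^{i\theta}$ and both summands of $D^-_x$ acquire $e^{-i\theta}$. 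Hence $\tau$ is invariant under the gauge action on $\mathbf A$, giving $\mathbf S^{(k,n)}\equiv 0$ whenever $k\ne n$.

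For $\mathbf S^{(1,1)}$, I would expand $\mathfrak I^{(2)}_{+,-}(g,h,v^{(2)})$ via \eqref{iyut8}. Three of the four summands vanish on $\Omega$: $(1,2)$ is two annihilations, $(2,1)$ is two creations orthogonal to $\Omega$, and $(2,2)$ ends in an annihilation of $\Omega$. The $JQ$-CR applied to the surviving $(1,1)$ term on $\Omega$ yields
$$\tau(\mathfrak I^{(2)}_{+,-}(g,h,v^{(2)}))=\int_X(K_1g)(x)(K_1h)(x)v^{(2)}(x^1,x^1)\,m(dx).$$
The real-bilinear adjoint identity $\int(K_1g)f\,dm=\int g(K_1^*f)\,dm$ (valid because $K_1$ preserves $\mathscr H$) converts this into $\int g(x)(Th)(x)v^{(2)}(x^1,x^1)\,m(dx)$, proving \eqref{tyr75i}. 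Writing the $X=\R^d$ integral as an iterated integral over $x^1\in\R$ and $(x^2,\dots,x^d)\in\R^{d-1}$, and identifying the resulting density against the diagonal atom of $\nu^{(2)}_{+,-}$ on $D\subset\R^2$, yields \eqref{re6}.

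For general $n$ the key observation is that in the expansion of $\mathfrak I^{(2n)}_{+,\dots,+,-,\dots,-}(g_n,\dots,g_1,h_1,\dots,h_n,v^{(2n)})$ via \eqref{iyut8}, only the index $(i_1,\dots,i_{2n})=(1,\dots,1)$ contributes. Indeed, if some $a>n$ has $i_a=2$, taking the largest such $a$ makes every operator at positions $a+1,\dots,2n$ equal to some $\partial^+_{\cdot,1}$, and commuting $\partial^-_{x_a,2}$ past them via $\partial^-_{x,2}\partial^+_{y,1}=JQ(x,y)\partial^+_{y,1}\partial^-_{x,2}$ (no delta on disjoint copies) leaves $\partial^-_{x_a,2}$ acting on $\Omega$, which vanishes. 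Thus $i_a=1$ for all $a>n$, and creation-annihilation balance then forces $i_a=1$ for all $a\le n$ as well. The surviving term is the pure-$X_1$ operator-valued integral $I^{(2n)}_{-,\dots,-,+,\dots,+}(\mathscr K_1 g_n,\dots,\mathscr K_1 h_n,\mathscr R^{(2n)}_{1,\dots,1}v^{(2n)})$, whose vacuum expectation on the $Q$-Fock space can be evaluated by iterating \eqref{ufr576er75}: this yields a sum over $\pi\in S_n$ of $n$-fold delta contractions $\prod_a\delta(x_a,x_{n+\pi(a)})$ with reordering factor $Q_\pi(x_1^1,\dots,x_n^1)$. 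Converting each factor $(K_1g_a)(K_1h_{\pi(a)})$ to $g_a(Th_{\pi(a)})$ via the real adjoint and reading off \eqref{re6} gives \eqref{yu7r76}. The main obstacle is the bookkeeping in this last step: tracking the $Q$-phases generated by iterated $Q$-CR to verify they assemble into exactly $Q_\pi(s_1,\dots,s_n)$, and reconciling the reversed indexing $g_n,\dots,g_1$ with the $\pi$-labelling in \eqref{yu7r76}.
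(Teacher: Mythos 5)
Your reduction to the all-$(1,\dots,1)$ term is exactly the paper's key step (equation \eqref{gftft}): since every $D^-_{x_a}$ with $i_a=2$ is an annihilator on the second copy with nothing on that copy to contract against, and every $D^+_{x_a}$ with $i_a=2$ is a creator on the second copy that nothing to its left can absorb, only the pure-$X_1$ word $I^{(2n)}_{-,\dots,-,+,\dots,+}(\mathscr K_1 g_n,\dots,\mathscr K_1 h_n,\mathscr R^{(2n)}_{1,\dots,1}v^{(2n)})$ survives, and this already forces $k=n$. Your separate gauge-invariance argument via the Bogoliubov automorphism $\partial^{\pm}_{x,1}\mapsto e^{\mp i\theta}\partial^{\pm}_{x,1}$, $\partial^{\pm}_{x,2}\mapsto e^{\pm i\theta}\partial^{\pm}_{x,2}$ is a legitimate alternative the paper does not use; to make it airtight you should note that it is implemented by the second quantization of the unitary $e^{-i\theta}\oplus e^{i\theta}$ on $L^2(Z,m)$, which preserves the $JQ$-symmetric subspaces and fixes $\Omega$, so that $\tau$ is genuinely invariant. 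The $(1,1)$ computation is also correct, provided you invoke condition (C) and Remark~\ref{yf7} to commute $K_1^*$ past multiplication by $v^{(2)}(x^1,x^1)$ before forming $T=K_1^*K_1$.

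The genuine gap is the step you yourself flag: the identification of the vacuum expectation of the surviving copy-$1$ word with the $Q_\pi$-weighted permanent \eqref{yu7r76}. Asserting that iterating \eqref{ufr576er75} "yields $\prod_a\delta(x_a,x_{n+\pi(a)})$ with reordering factor $Q_\pi$" is precisely the claim that needs proof, and tracking the phases through $O(n^2)$ pairwise commutations is where an ad hoc Wick expansion tends to go wrong. The paper closes this gap with two devices. First, it reduces to $v^{(2n)}$ of product form $u_n\otimes\dots\otimes u_1\otimes w_1\otimes\dots\otimes w_n$ with indicator factors (by the measure-theoretic argument of Lemma~\ref{tuyfr7}), absorbs these into $g_i'=g_iu_i$, $h_i'=h_iw_i$, and recognizes the surviving term as the Fock inner product $\big((K_1h'_1)\circledast\dots\circledast(K_1h'_n),(\overline{K_1g'_1})\circledast\dots\circledast(\overline{K_1g'_n})\big)=n!\big(P_n(K_1h'_1)\otimes\dots\otimes(K_1h'_n),\dots\big)$; the sum over $\pi$ with the factor $Q_\pi$ then drops out of the definition \eqref{hgyi8} of $P_n$ rather than from a chain of commutations. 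Second, condition (C) lets $K_1^*\otimes\dots\otimes K_1^*$ pass through multiplication by $Q_\pi(x_1^1,\dots,x_n^1)$ to produce $T^{\otimes n}$, after which the result is rewritten against $\nu^{(2n)}_{\sharp_1,\dots,\sharp_{2n}}$ and matched with \eqref{utfr75re}. Without this (or an equally careful substitute), your proof establishes \eqref{tyr75i} and \eqref{re6} and the vanishing for $k\ne n$, but not the quasi-free factorization that is the substance of the theorem.
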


\begin{remark}Note that, by \eqref{iyt8o}, in the case $\eta<0$, the operator $T$ additionally satisfies $T\le-1/\eta$.
\end{remark}

\begin{remark}
Note that the representation of the $Q$-CR algebra from Theorem~\ref{tre645} depends on operators $K_1$ and $K_2$ satisfying \eqref{iyt8o}, while the state $\tau$ from Theorem~\ref{yt8tt} depends only on $|K_1|:=\sqrt{K_1^*K_1}$.
\end{remark}

\begin{proof}[Proof of Theorem \ref{yt8tt}] For any $g_1,\dots,g_k, h_1,\dots,h_{n}\in\mathscr H_{\mathbb C}$ and   $v^{(k+n)}\in L^\infty(\R^{k+n},\nu^{(k+n)}_{\sharp_1,\dots,\sharp_{k+n}})$ with $\sharp_1=\dots=\sharp_k=+$, $\sharp_{k+1}=\dots=\sharp_{k+n}=-$, we have by \eqref{uyr75yfttr}, \eqref{yufu8rt86rd}, \eqref{iyut8}, \eqref{buyt8t}, and \eqref{cfyde64e64e3},
\begin{align}
&\mathbf S^{(k,n)}(g_k,\dots,g_1,h_1,\dots,h_n,v^{(k+n)})\notag\\
& \quad =
\left(\mathfrak I^{(k+n)}_{\sharp_1,\dots,\sharp_{k+n}}\big(g_k,\dots,g_1,h_1,\dots,h_n,v^{(k+n)}\big)\Omega,\Omega\right)_{\mathscr F^{JQ}(L^2(Z, m ))}\notag\\
& \quad=
\left(I^{(k+n)}_{\sharp_{k+1},\dots,\sharp_{k+n},\sharp_1,\dots,\sharp_k}\big(\mathscr K_1g_k,\dots,\mathscr K_1g_1,\right.\notag\\
&\qquad\qquad\left.\vphantom{I^{(k+n)}_{\sharp_{k+1},\dots,\sharp_{k+n},\sharp_1,\dots,\sharp_k}}
\mathscr K_1h_1,\dots,\mathscr K_1h_n,\mathscr R^{(k+n)}_{1,\dots,1}v^{(k+n)}\big)\Omega,\Omega\right)_{\mathscr F^{JQ}(L^2(Z,{k}))}.\label{gftft}\end{align}
Hence $\mathbf S^{(k,n)}\equiv 0$ if $k\ne n$, and for $k=n$ we get from \eqref{gftft}:
\begin{align}
&\mathbf S^{(n,n)}(g_n,\dots,g_1,h_1,\dots,h_n,v^{(2n)})\notag\\
&\quad =
\left(I^{(2n)}_{\sharp_{n+1},\dots,\sharp_{2n},\sharp_1,\dots,\sharp_n}\big( K_1g_n,\dots,K_1g_1, K_1h_1,\dots,K_1h_n,v^{(2n)}\big)\Omega,\Omega\right)_{\mathscr F^{Q}(L^2(X, m ))}.\label{tyde6eftt}
\end{align}
Note that formulas \eqref{tyr75i}, \eqref{re6} trivially follow from \eqref{tyde6eftt} with $n=1$.

For the general $n\in\mathbb N$, analogously to the proof of Lemma~\ref{tuyfr7}, it suffices to  consider the case where
\begin{equation}\label{tyyr7r7o8}
 v^{(2n)}(s_1,\dots,s_{2n})=u_n(s_1)\dotsm u_1(s_{n}) w_1(s_{n+1})\dotsm w_n(s_{2n}),\end{equation}
where $u_1,\dots,u_n,w_1,\dots,w_n$ are indicator functions of sets from $\mathscr B(\R)$. Denote
$$g_i'(x):=g_i(x)u_i(x^1),\quad h_i'(x):=h_i(x) w_i(x^1),\quad x\in X,\ i=1,\dots,n.$$
We get from (A1), Remarks~\ref{yf7}, \ref{yut86}, and formulas \eqref{hgyi8},  \eqref{tyde6eftt}, \eqref{tyyr7r7o8}
\begin{align}
&\mathbf S^{(n,n)}(g_n,\dots,g_1,h_1,\dots,h_n,v^{(2n)})\notag\\
&\quad =
\left(I^{(2n)}_{\sharp_{n+1},\dots,\sharp_{2n},\sharp_1,\dots,\sharp_n}\big( K_1g'_n,\dots,K_1g'_1, K_1h'_1,\dots,K_1h'_n,1\big)\Omega,\Omega\right)_{\mathscr F^{Q}(L^2(X, m ))}\notag\\
&\quad=\bigg(\int_X  m (dx_n)\, (K_1g_n')(x_n)\partial_{x_n}^-\notag\\
&\qquad \dotsm
\int_X  m (dx_1)\, (K_1g_1')(x_1)\partial_{x_1}^-\,\big( (K_1 h'_1)\circledast\dotsm \circledast (K_1 h'_n)\big)
 \Omega,\Omega\bigg)_{\mathscr F^{Q}(L^2(X, m ))}\notag\\
 &\quad=\big((K_1 h'_1)\circledast\dotsm \circledast (K_1 h'_n),
 (\overline{K_1 g'_1})\circledast\dotsm \circledast (\overline{K_1 g'_n})
 \big)_{\mathscr F^{Q}(L^2(X, m ))}\notag\\
 &\quad=n!\left(P_n (K_1 h'_1)\otimes\dotsm \otimes (K_1 h'_n),(K_1 \overline{g'_1})\otimes\dotsm \otimes (K_1 \overline{g'_n})\right)_{\mathscr H_{\mathbb C}^{\otimes n}}\notag\\
 &\quad=\sum_{\pi\in S_n}\left(
 \big[
 (K_1h'_{\pi(1)})\otimes\dots\otimes (K_1h'_{\pi(n)})
 \big]Q_\pi,(K_1 \overline{g'_1})\otimes\dotsm \otimes (K_1 \overline{g'_n})
 \right)_{\mathscr H_{\mathbb C}^{\otimes n}}\notag\\
 &\quad=\sum_{\pi\in S_n}\left(
 (K_1^*\otimes\dots\otimes K^*_1)Q_\pi(K_1\otimes\dots\otimes K_1)
 (h'_{\pi(1)}\otimes\dots\otimes h'_{\pi(n)}),
 \overline{g'_1}\otimes\dots\otimes \overline{g'_n}
 \right)_{\mathscr H_{\mathbb C}^{\otimes n}}\notag\\
 &\quad=\sum_{\pi\in S_n}\left( Q_\pi \big[(Th'_{\pi(1)})\otimes\dots\otimes   (Th'_{\pi(n)})\big],\overline{g'_1}\otimes\dots\otimes \overline{g'_n} \right)_{\mathscr H_{\mathbb C}^{\otimes n}}\notag\\
 &\quad=\sum_{\pi\in S_n}\int_{X^n}
 Q_\pi(x_1^1,\dots,x_n^1) (Th_{\pi(1)})(x_1)g_1(x_1)\dotsm  (Th_{\pi(n)})(x_n)g_n(x_n)\notag\\
 &\qquad\times u_1(x_1^1)\dotsm u_n(x_n^1)w_{\pi(1)}(x_1^1)\dotsm w_{\pi(n)}(x_n^1)\, m ^{\otimes n}(dx_1\dotsm dx_n)\notag\\
 &\quad =\sum_{\pi\in S_n}\int_{X^n}\bigg(\prod_{i=1}^n g_i(x_i)(Th_{\pi(i)})(x_i)\bigg)Q_\pi(x_1^1,\dots,x_n^1)\notag\\
&\qquad  \times v^{(2n)}(x_{n}^1,\dots,x_1^1,x_{\pi^{-1}(1)}^1,\dots,x_{\pi^{-1}(n)}^1)\, m ^{\otimes n}(dx_1\dotsm dx_n)\notag\\
&\quad= \sum_{\pi\in S_n}\int_{\R^{n}}\left(\prod_{i=1}^n\int_{\R^{d-1}} g_i(s_i,x^2,\dots,x^d)(Th_{\pi(i)})(s_i,x^2,\dots,x^d)\,dx^2\dotsm dx^d\right)
\notag\\
&\qquad\times
Q_\pi(s_1,\dots,s_{n})v^{(2n)}(s_n,\dots,s_1,s_{\pi^{-1}(1)},\dots,s_{\pi^{-1}(n)})
\,ds_1\dotsm ds_n\notag\\
&\quad= \sum_{\pi\in S_n}\int_{\R^{2n}}
\left(\prod_{i=1}^n \chi_D(s_i,s_{n+\pi(i)})
\int_{\R^{d-1}} g_i(s_i,x^2,\dots,x^d)(Th_{\pi(i)})(s_i,x^2,\dots,x^d)\,dx^2\dotsm dx^d\right)
\notag\\
&\qquad\times
Q_\pi(s_1,\dots,s_{n})v^{(2n)}(s_n,\dots,s_1,s_{n+1},\dots,s_{2n})
\,\nu^{(2n)}_{\sharp_1,\dots,\sharp_{2n}}(ds_1\dotsm ds_{2n})\notag\\
 &\quad= \int_{\R^{2n}}\sum_{\pi\in S_n}
\left(\prod_{i=1}^n \mathbf S^{(1,1)}[g_i,h_{\pi(i)}](s_i,s_{n+\pi(i)}) \right)
\notag\\
&\qquad\times
Q_\pi(s_1,\dots,s_{n})v^{(2n)}(s_n,\dots,s_1,s_{n+1},\dots,s_{2n})
\,\nu^{(2n)}_{\sharp_1,\dots,\sharp_{2n}}(ds_1\dotsm ds_{2n}).\label{dre64}
  \end{align}
Hence, by \eqref{utfr75re}, the theorem follows.
\end{proof}

The following corollary immediately follows from \eqref{re6} and \eqref{dre64}. The reader is advised to compare it with formulas \eqref{cfyd6}--\eqref{yiut}.

\begin{corollary}
Let $v^{(2n)}:\R^{2n}\to\mathbb C$ be identically equal to 1.
Then, for any $g_1,\dots,g_n,\linebreak
h_1,\dots,h_{n}\in\mathscr H_{\mathbb C}$,
\begin{align*}
&\mathbf S^{(n,n)}(g_n,\dots,g_1,h_1,\dots,h_n,1)\\
&\quad =\sum_{\pi\in S_n}
\int_{X^n}\left(\prod_{i=1}^n g_i(x_i)(Th_{\pi(i)})(x_i)\right)Q_\pi(x_1,\dots,x_n)\, m ^{\otimes n}(dx_1\dotsm dx_n).
\end{align*}
\end{corollary}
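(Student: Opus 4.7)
The plan is to read off the corollary from the chain of equalities already established inside the proof of Theorem~\ref{yt8tt}. The key observation is that the computation \eqref{dre64} is broken into two halves: the first half produces an integral over $X^{n}$, and the second half rewrites that integral over $\R^{2n}$ against $\nu^{(2n)}_{\sharp_1,\dots,\sharp_{2n}}$. For the present corollary only the first half is needed.

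More precisely, I would begin by recalling that \eqref{dre64} shows, for any $v^{(2n)}$ of the product form \eqref{tyyr7r7o8} with $u_i, w_j$ indicators of sets in $\mathscr B(\R)$, the identity
\begin{align*}
&\mathbf S^{(n,n)}(g_n,\dots,g_1,h_1,\dots,h_n,v^{(2n)})\\
&\quad =\sum_{\pi\in S_n}\int_{X^n}\left(\prod_{i=1}^n g_i(x_i)(Th_{\pi(i)})(x_i)\, u_i(x_i^1)w_{\pi(i)}(x_i^1)\right) Q_\pi(x_1^1,\dots,x_n^1)\, m^{\otimes n}(dx_1\dotsm dx_n).
\end{align*}
Since the constant function $1$ on $\R$ is itself the indicator $\chi_\R$ of a measurable set, I can take each $u_i$ and each $w_j$ to be $1$; then $v^{(2n)}\equiv 1$ and the product $u_i(x_i^1)w_{\pi(i)}(x_i^1)$ disappears, leaving exactly the right-hand side asserted by the corollary.

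No further approximation or density argument is required: the reduction in the proof of Theorem~\ref{yt8tt} to indicator products (analogous to the use of Lemma~\ref{tuyfr7}) was needed to handle a \emph{general} $v^{(2n)}\in L^\infty$, but the specific function $v^{(2n)}\equiv 1$ is itself already of the permitted indicator-product form. Consequently, there is no real obstacle; the corollary is an immediate specialization of \eqref{dre64}, and the only bookkeeping is to verify that, as stated, the integration variables in the final formula naturally match (that is, $Q_\pi(x_1^1,\dots,x_n^1) = Q_\pi(x_1,\dots,x_n)$ under the abuse of notation explained after \eqref{gdyydfy}).
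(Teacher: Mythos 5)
Your proposal is correct and matches the paper's own (one-line) proof, which simply reads the corollary off from the computation \eqref{dre64} in the proof of Theorem~\ref{yt8tt}; your observation that $v^{(2n)}\equiv 1$ is itself of the permitted indicator-product form \eqref{tyyr7r7o8}, so that no approximation step is needed, is exactly the right justification. The only cosmetic difference is that you stop at the intermediate line of \eqref{dre64} written as an integral over $X^n$, whereas the paper cites the final line together with \eqref{re6}, but these are trivially equivalent.
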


\begin{corollary}
Let $\eta\ge0$. Let $T$ be a continuous linear operator in $\mathscr H$ that is self-adjoint and positive.  Assume that, for any bounded measurable function $\psi:\R\to\R$, the operator $T$ commutes with $M_\psi$ (see condition (C)). Extend $T$ by linearity to $\mathscr H_{\mathbb C}$.
Then there exists a gauge-invariant quasi-free state $\tau$ on the $Q$-CR algebra $\mathbf A$
that satisfies \eqref{tyr75i}.

If $\eta<0$, the latter statement remains true if the operator $T$ additionally satisfies $0\le T\le -1/\eta$.
\end{corollary}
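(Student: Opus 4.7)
The plan is to reduce the statement to Theorems~\ref{tre645} and \ref{yt8tt} by exhibiting operators $K_1,K_2$ on $\mathscr H$ that satisfy condition~(C) and the relation \eqref{iyt8o}, with $K_1^*K_1=T$. Given such $K_1,K_2$, Theorem~\ref{tre645} produces a representation of the $Q$-CR algebra $\mathbf A$ on $\mathscr F^{JQ}_{\mathrm{fin}}(L^2(Z, m ))$, and Theorem~\ref{yt8tt} tells us that the vacuum state $\tau(a):=(a\Omega,\Omega)_{\mathscr F^{JQ}(L^2(Z, m ))}$ is a gauge-invariant quasi-free state on $\mathbf A$ satisfying the one-point function formula \eqref{tyr75i}.

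The construction is immediate via the functional calculus for self-adjoint bounded operators on the real Hilbert space $\mathscr H$. I would set $K_1:=\sqrt T$, which is well-defined as a bounded positive self-adjoint operator and satisfies $K_1^*K_1=K_1^2=T$. I would then set $K_2:=\sqrt{\mathbf 1+\eta T}$. For this to make sense as a bounded positive self-adjoint operator one needs $\mathbf 1+\eta T\ge 0$: in the case $\eta\ge 0$ this is obvious since $\mathbf 1+\eta T\ge \mathbf 1$, whereas in the case $\eta<0$ the assumption $0\le T\le -1/\eta$ gives $\eta T\ge -\mathbf 1$, hence $\mathbf 1+\eta T\ge 0$, and moreover $\mathbf 1+\eta T\le \mathbf 1$ so the operator is bounded. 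By construction,
\[
K_2^*K_2=\mathbf 1+\eta T=\mathbf 1+\eta K_1^*K_1,
\]
which is precisely \eqref{iyt8o}.

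The remaining point is to verify condition~(C) for $K_1$ and $K_2$. Since by hypothesis $T$ commutes with $M_\psi$ for every bounded measurable $\psi:\R\to\R$, and since $M_\psi$ is self-adjoint, the Borel functional calculus implies that any bounded Borel function of $T$ commutes with $M_\psi$; in particular this applies to the functions $t\mapsto\sqrt t$ on the spectrum of $T$ and $t\mapsto\sqrt{1+\eta t}$ on the spectrum of $T$ (which is contained in $[0,\|T\|]$, resp.\ in $[0,-1/\eta]$ when $\eta<0$). Concretely, one can obtain $\sqrt T$ as a strong limit of polynomials in $T$, and each such polynomial commutes with $M_\psi$; the same argument handles $\sqrt{\mathbf 1+\eta T}$. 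Thus both $K_1$ and $K_2$ fulfil condition~(C).

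With $K_1,K_2$ in hand, extend them by linearity to $\mathscr H_{\mathbb C}$ and invoke Theorem~\ref{tre645} to define the operator-valued integrals $\mathfrak I^{(k)}_{\sharp_1,\dots,\sharp_k}$ on $\mathscr F^{JQ}_{\mathrm{fin}}(L^2(Z, m ))$, which then generate the $Q$-CR algebra $\mathbf A$. Applying Theorem~\ref{yt8tt} to the vacuum state $\tau$ yields a gauge-invariant quasi-free state, and since $K_1^*K_1=T$, formula \eqref{tyr75i} is satisfied with the given operator $T$. No step here is a genuine obstacle; the only point requiring care is the verification that the square-root construction is compatible with the commutation requirement of condition~(C), which as explained follows from the spectral theorem.
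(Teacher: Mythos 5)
Your proposal is correct and coincides with the paper's own proof: the paper likewise sets $K_1:=\sqrt{T}$, $K_2:=\sqrt{\mathbf 1+\eta T}$, notes that condition~(C) holds, and invokes Theorem~\ref{yt8tt}. Your extra detail (positivity of $\mathbf 1+\eta T$ in the case $\eta<0$, and the functional-calculus argument that the square roots inherit the commutation with $M_\psi$) only fills in steps the paper leaves implicit.
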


\begin{proof} Choose $K_1:=\sqrt{T}$, $K_2:=\sqrt{\mathbf 1+\eta T}$. Note that $K_1$ and $K_2$ satisfy condition~(C), see Remark~\ref{uyr75khuyh}. Now the corollary follows from Theorem~\ref{yt8tt}.
\end{proof}

\section{Particle density}\label{crds54w}

Let operators $\partial_x^+$, $\partial_x^-$ ($x\in X$) satisfy the ACR. We heuristically define the particle density by
$$\rho(x):=\partial_x^+\partial_x^-, \quad x\in X.$$
It follows from the $Q$-CR that these operators commute, cf.\ \cite{GM,Goldin_Sharp}. Indeed, for any $x,y\in X$,
\begin{align*}
\rho(x)\rho(y)&=\partial_x^+\partial_x^-\partial_y^+\partial_y^-=\delta(x-y)\partial_x^+\partial_y^-+Q(x,y)\partial_x^+\partial_y^+\partial_x^-\partial_y^-\\
&=\delta(x-y)\partial_x^+\partial_x^-+Q(y,x)\partial_y^+\partial_x^+\partial_y^-\partial_x^-\\
&=\delta(x-y)\partial_x^+\partial_x^-+\partial_y^+\partial_y^-\partial_x^+\partial_x^--\delta(x-y)\partial_x^+\partial_x^-=\rho(y)\rho(x).
\end{align*}

In this section, we will study the particle density corresponding to the gauge-invariant quasi-free state from Theorem~\ref{yt8tt} with $T=\kappa^2\mathbf 1$, where $\kappa>0$ is a constant. In the case $\eta<0$, we additionally assume that $\kappa^2<1/\eta$. Thus, we set $K_1=\kappa\mathbf 1$ and $K_2=\sqrt{1+\eta\kappa^2}\,\mathbf 1$. We will see below that, in order to properly define $\int_X m (dx)\,\varphi(x)\rho(x)$ for a test function $\varphi:X\to\R$, we will need a certain renormalization.

We will also assume that $\eta=\Re q$. Note that, with this choice of $\eta$, we get
$$Q(x,x)=\frac12\bigg(\lim_{x\to y,\ x^1>y^1}Q(x,y)+\lim_{x\to y,\ x^1<y^1}Q(x,y\bigg),\quad x\in X.$$
We will use this value of $Q(x,x)$ as a `limiting value' of $Q(x,y)$ ($x_1\ne y_1$) when performing renormalization.

\subsection{Renormalization}

We start with heuristic calculations. By \eqref{vgydy6d6} and \eqref{guf7} with the above choice of the operators $K_1$ and $K_2$, we get
\begin{align*}
D_x^+&=\kappa\partial_{x,1}^-+\sqrt{1+\eta\kappa^2}\,\partial_{x,2}^+,\\
D_x^-&=\kappa\partial_{x,1}^++\sqrt{1+\eta\kappa^2}\,\partial_{x,2}^-.
\end{align*}
Hence, the corresponding particle density is given by
\begin{equation}\label{vcyre6jr876r7}
\rho(x)=D_x^+D_x^-=\kappa\sqrt{1+\eta\kappa^2}\, R(x),\end{equation}
where
\begin{equation}\label{fufuyuyf}
 R(x):=\partial_{x,2}^+\partial_{x,1}^++\partial_{x,1}^-\partial_{x,2}^-+\beta^{-1}\partial_{x,1}^-\partial_{x,1}^++\beta\partial_{x,2}^+\partial_{x,2}^-\end{equation}
with $\beta:=\sqrt{\eta+\kappa^{-2}}$.
We denote by $\mathscr D$ the space of all real-valued infinitely differentiable functions on $X$ with compact support. For each $\varphi\in\mathscr D$, we denote $\rho(\varphi):=\int_X m (dx)\,\varphi(x)\rho(x)$. We denote by $\mathfrak R$ the real commutative algebra generated by $\rho(\varphi)$ ($\varphi\in\mathscr D$) and constants. The involution on this algebra is the indentity mapping.
We would like to define a vacuum state $\tau$ on $\mathfrak R$ analogously to \eqref{cfyde64e64e3}. However, we are not able to intepret $\rho(\varphi)$ as a linear operator in $\mathscr F^{JQ}(L^2(Z, m ))$. So we need a proper renormalization.
For this, as discussed in Introduction, we will use Ivanov's formula \cite{Ivanov} in the form $\delta^2=\delta.$

Below we will denote by $\odot$ symmetric tensor product. Let $\mathscr D^{\odot_{\mathrm{a}}n}$ denote the $n$th symmetric algebraic tensor power of $\mathscr D$, i.e., the vector space of finite sums of functions on $X^n$ of the form $f_1\odot\dots\odot f_n$, where $f_1,\dots,f_n\in\mathscr D$. For each $f^{(n)}\in \mathscr D^{\odot_{\mathrm{a}}n}$, we denote
$$ W(f^{(n)}):=\int_{X^n} m ^{\otimes n}(dx_1\dotsm dx_n)f^{(n)}(x_1,\dots,x_n)\partial_{x_1,2}^+\partial_{x_1,1}^+\dotsm \partial_{x_n,2}^+\partial_{x_n,1}^+\Omega.$$
We stress that $W(f^{(n)})$ is treated as a formal expression.
Note that, for $f_1,\dots,f_n\in\mathscr D$,
$$ W(f_1\odot\dots\odot f_n)=\bigg(\int_X m (dx_1)f_1(x_1)\partial_{x_1,2}^+\partial_{x_1,1}^+\dotsm \int_X m (dx_n)f_n(x_n)\partial_{x_n,2}^+\partial_{x_n,1}^+\bigg)\Omega,$$
where we used that $\partial_{x_i,2}^+\partial_{x_i,1}^+$ and $\partial_{x_j,2}^+\partial_{x_j,1}^+$ commute for $i\ne j$.
We also set $\mathscr D^{\odot_{\mathrm{a}}0}:=\R$ and for $f^{(0)}\in \R$, we set $W(f^{(0)}):=f^{(0)}\Omega$.
We denote by $\mathscr F_{\mathrm{fin}}(\mathscr D)$ the real linear space of vectors of the form
$$ F=(f^{(0)},f^{(1)},\dots,f^{(n)},0,0,\dots),$$
where $f_i\in \mathscr D^{\odot_{\mathrm{a}}i}$. For such $F\in \mathscr F_{\mathrm{fin}}(\mathscr D)$, we denote
$$W(F):=\sum_{i=0}^n W(f^{(i)}).$$

The following proposition will be central for our considerations.

\begin{proposition}\label{yur75ri7}
For each $\varphi\in\mathscr D$, we define a linear operator
$$\hat R(\varphi):\mathscr F_{\mathrm{fin}}(\mathscr D)\to\mathscr F_{\mathrm{fin}}(\mathscr D)$$ by
\begin{equation}\label{ftd5ed5d}\hat R(\varphi):=a^+(\varphi)+(\beta+\beta^{-1}\eta)a^0(\varphi)+a_1^-(\varphi)+\eta a_2^-(\varphi)+\beta^{-1}\int_X\varphi(x)\, m (dx).\end{equation}
Here $a^+(\varphi)$ is the symmetric creation operator: for $f^{(n)}\in \mathscr D^{\odot_{\mathrm{a}}n}$ we have $a^+(\varphi)f^{(n)}:=\varphi\odot f^{(n)}$; $a^0(\varphi)$ is the neutral operator:
for $f_1,\dots,f_n\in\mathscr D$
$$a^0(\varphi)f_1\odot\dots\odot f_n:=\sum_{i=1}^n f_1\odot\dots\odot (\varphi f_i)\odot\dots\odot f_n;$$
$a_1^-(\varphi)$ is the annihilation operator:
$$a_1^-(\varphi)f_1\odot\dots\odot f_n:=\sum_{i=1}^n \int_X \varphi(x)f_i(x)\, m (dx)\, f_1\odot\dots\odot \check f_i\odot\dots\odot f_n,$$
where $\check f_i$ denotes the absence of $f_i$; and $a_2^-(\varphi)$ is an annihilation operator which acts as follows:
$$a_2^-(\varphi)f_1\odot\dots\odot f_n:=\sum_{i=1}^n\sum_{j\ne i}
f_1\odot\dots\odot f_{\min\{i,j\}-1}\odot (\varphi f_if_j)\odot\dots
\odot\check f_{\max\{i,j\}}\odot\dots\odot f_n.$$
Let
$$R(\varphi):=\int_X  m (dx)\varphi(x)R(x)=\left(\kappa\sqrt{1+\eta\kappa^2}\right)^{-1}\rho(\varphi).$$  Then, the $JQ$-commutation relations satisfied by $\partial_{x,i}^+$, $\partial_{x,i}^-$, the condition $\partial_{x,i}^-\Omega=0$, and the renormalization formula $\delta^2=\delta$ imply that, for each $F\in\mathscr F_{\mathrm{fin}}(\mathscr D)$,
\begin{equation}\label{tye65re}R(\varphi)W(F)=W(\hat R(\varphi)F).
\end{equation}
\end{proposition}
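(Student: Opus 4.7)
By linearity of both sides in $F$, it suffices to verify \eqref{tye65re} when $F=f_1\odot\dots\odot f_n$ is a pure symmetric tensor and to track carefully what each of the four summands in
$$R(x)=\partial_{x,2}^+\partial_{x,1}^+ +\partial_{x,1}^-\partial_{x,2}^- +\beta^{-1}\partial_{x,1}^-\partial_{x,1}^+ +\beta\,\partial_{x,2}^+\partial_{x,2}^-$$
contributes when applied to
$$W(F)=\int_{X^n} m^{\otimes n}(dy_1\dotsm dy_n)\,(f_1\odot\dots\odot f_n)(y_1,\dots,y_n)\prod_{j=1}^n\partial_{y_j,2}^+\partial_{y_j,1}^+\Omega.$$
The computation proceeds in four blocks corresponding to the four summands, and in each block one moves the annihilation operators to the right past the creation pairs $\partial_{y_j,2}^+\partial_{y_j,1}^+$ using the $JQ$-commutation relations of Section~\ref{jhuf76} (and the dual relation of Proposition~\ref{ur7ir}), discarding any term that ends up with an annihilator acting on $\Omega$.

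The first block is immediate: $\int m(dx)\varphi(x)\partial_{x,2}^+\partial_{x,1}^+\cdot W(F)$ just prepends a new pair and gives $W(\varphi\odot f_1\odot\dots\odot f_n)=W(a^+(\varphi)F)$. For the remaining three blocks, moving a single $\partial^-$ across the $n$ creation pairs yields a sum indexed by the position $j\in\{1,\dots,n\}$ at which it produces a $\delta$, together with a $JQ$-phase accumulated while sliding past the creation operators to the left of position $j$. In the block $\beta\int\varphi(x)\partial_{x,2}^+\partial_{x,2}^-\,m(dx)$, the $\delta$ at position $j$ forces coincidence at $y_j$; integrating and accounting for the phases (which collapse via $JQ(x,x)=\eta$ on the diagonal created by the $\delta$) then produces $\beta\,W(a^0(\varphi)F)$. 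The block $\int\varphi(x)\partial_{x,1}^-\partial_{x,2}^-\,m(dx)$ contains two annihilators, so one obtains either two distinct $\delta$-contractions at positions $j\ne k$ (giving, after a careful rearrangement of the accumulated $JQ$-phases into the $a_2^-$-pattern together with the exchange coefficient $\eta$), or both annihilators hit the same pair $(y_j,2),(y_j,1)$ and one of the resulting $\delta$-squares is renormalized as $\delta^2=\delta$, yielding the $a_1^-(\varphi)$-contribution.

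The most delicate block is $\beta^{-1}\int\varphi(x)\partial_{x,1}^-\partial_{x,1}^+\,m(dx)$, because here the annihilator and an additional creation operator act at the same point~$x$. Moving $\partial_{x,1}^-$ past the freshly inserted $\partial_{x,1}^+$ produces the singular term $\delta(0)$, which under Ivanov's renormalization $\delta^2=\delta$ is interpreted as $1$ (taking $\delta(0)=\int\delta(y-y)$ to match $\delta(y-y)\delta(y-y)=\delta(y-y)$) and gives the constant $\beta^{-1}\int\varphi\,dm$ acting as the multiplication on $W(F)$. The remaining, non-singular part of $\partial_{x,1}^-\partial_{x,1}^+$ propagates through the $\partial_{y_j,2}^+\partial_{y_j,1}^+$ pairs and, after collecting the diagonal values $JQ(\cdot,\cdot)=\eta$ that arise from the $\delta(x-y_j)$-contractions, contributes $\beta^{-1}\eta\,W(a^0(\varphi)F)$. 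Added to the $\beta\,W(a^0(\varphi)F)$ from the fourth block, this yields the exact coefficient $(\beta+\beta^{-1}\eta)$ announced in the definition of $\hat R(\varphi)$.

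Assembling all four contributions gives the claimed identity $R(\varphi)W(F)=W(\hat R(\varphi)F)$. The main obstacle is the bookkeeping of the $JQ$-phases that the annihilator picks up as it slides past a sequence of creation pairs and, in the two-annihilator block, verifying that these phases assemble exactly into the function $Q_\pi$ implicit in $a_2^-$; in doing so one must carefully apply the convention $Q(x,x)=\eta$ as the limiting value of $Q$ at each coincidence point, and invoke the renormalization $\delta^2=\delta$ every time two contractions try to force the same spatial location. Once this bookkeeping is carried out on pure symmetric tensors, linearity extends the formula to all of $\mathscr F_{\mathrm{fin}}(\mathscr D)$.
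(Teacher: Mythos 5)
Your proposal follows essentially the same route as the paper: split $R(x)$ into its four summands, act on a pure tensor $W(f_1\odot\dots\odot f_n)$, normal-order by sliding the annihilators past the creation pairs $\partial_{y_j,2}^+\partial_{y_j,1}^+$, pick up $\delta$-contractions, and invoke $\delta^2=\delta$ together with the diagonal value $Q(x,x)=\eta$ at coincidences. All five contributions and their coefficients are identified correctly, so as an outline this is sound.

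One point in your description is misleading and would give a wrong coefficient if carried out literally. In the block $\beta\int\varphi(x)\,\partial_{x,2}^+\partial_{x,2}^-\,m(dx)$ you say the phases ``collapse via $JQ(x,x)=\eta$ on the diagonal created by the $\delta$.'' In fact no $\eta$ arises there at all: sliding $\partial_{y,2}^-$ past a full pair $\partial_{x_j,2}^+\partial_{x_j,1}^+$ produces the total phase $Q(y,x_j)Q(x_j,y)=1$, and the $\delta(y-x_j)$ from the contraction with $\partial_{x_j,2}^+$ carries coefficient $1$, not $\eta$; the reinserted $\partial_{y,2}^+$ then restores the pair. If $\eta$ entered here the way you describe, the contribution would be $\beta\eta\,a^0(\varphi)$ rather than the correct $\beta\,a^0(\varphi)$, and the final coefficient of $a^0(\varphi)$ would not come out as $\beta+\beta^{-1}\eta$. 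The factor $\eta$ genuinely appears only in the other blocks, via $Q(x_j,y)\delta(y-x_j)=\eta\,\delta(y-x_j)$ when $\partial_{y,1}^-$ contracts after first passing $\partial_{x_j,2}^+$ (giving the $\eta\,a_2^-$ term and the $\beta^{-1}\eta\,a^0$ term). Since you nevertheless state the correct outcome $\beta\,W(a^0(\varphi)F)$, this is a slip in the explanation rather than a fatal error, but it is exactly the kind of phase bookkeeping the proof lives or dies by, so it needs to be done explicitly rather than asserted.
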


\begin{proof}
We trivially get
\begin{equation}\label{ytd6e6iff}
 \int_X m (dy)\varphi(y) \partial_{y,2}^+\partial_{y,1}^+\, W(F)=W(a^+(\varphi) F).\end{equation}
For each $f^{(n)}\in \mathscr D^{\odot_{\mathrm{a}}n}$, we have
\begin{align}
&\int_X m (dy)\varphi(y) \partial_{y,1}^-\partial_{y,2}^-\, W(f^{(n)})
 =
\int_{X^{n+1}} m ^{\otimes(n+1)}(dy\,dx_1\dotsm dx_n)\varphi(y)f^{(n)}(x_1,\dots,x_n) \notag\\
&\qquad\times \partial_{y,1}^-\partial_{y,2}^- \partial_{x_1,2}^+\partial_{x_1,1}^+\dotsm \partial_{x_n,2}^+\partial_{x_n,1}^+\Omega.\label{tsers}\end{align}
Note that
\begin{align*}
\partial_{y,2}^-\partial_{x_i,2}^+\partial_{x_i,1}^+&=\delta(y-x_i)\delta_{x_i,1}^++Q(y,x_i)\partial_{x_i,2}^+\partial_{y,2}^-\partial_{x_i,1}^+\\
&=\delta(y-x_i)\delta_{x_i,1}^++\partial_{x_i,2}^+\partial_{x_i,1}^+\partial_{y,2}^-,
\end{align*}
and for $i\ne j$
\begin{equation}\label{ufr76}
\partial_{x_i,1}^+\partial_{x_j,2}^+\partial_{x_j,1}^+=\partial_{x_j,2}^+\partial_{x_j,1}^+\partial_{x_i,1}^+.\end{equation}
Hence,
\begin{align}
&\partial_{y,1}^-\partial_{y,2}^- \partial_{x_1,2}^+\partial_{x_1,1}^+\dotsm \partial_{x_n,2}^+\partial_{x_n,1}^+\Omega\notag\\
&\quad =\sum_{i=1}^n\delta(y-x_i)\partial_{y,1}^- \partial_{x_1,2}^+\partial_{x_1,1}^+\dotsm \big(\partial_{x_i,2}^+,\partial_{x_i,1}^+\big)\check{}\dotsm
\partial_{x_n,2}^+\partial_{x_n,1}^+\partial_{x_i,1}^+\Omega.\label{tydy6}
\end{align}
Here and below $(\dotsm)\check{}$ denotes the absence of the corresponding term. Similarly, we have
\begin{align}
\partial_{y,1}^-\partial_{x_j,2}^+\partial_{x_j,1}^+&=Q(x_j,y)\partial_{x_j,2}^+\partial_{y,1}^-\partial_{x_j,1}^+\notag\\
&=Q(x_j,y)\partial_{x_j,2}^+\delta(y-x_j)+\partial_{x_j,2}^+\partial_{x_j,1}^+\partial_{y,1}^-\notag\\
&=\eta\partial_{x_j,2}^+\delta(y-x_j)+\partial_{x_j,2}^+\partial_{x_j,1}^+\partial_{y,1}^-,\label{yur76}
\end{align}
Thus, using \eqref{ufr76} and \eqref{yur76}, we  continue \eqref{tydy6} as follows:
\begin{align}
&=\eta\sum_{i=1}^n\sum_{j\ne i} \delta(y-x_i)\delta(y-x_j)\partial_{x_1,2}^+\partial_{x_1,1}^+\dotsm \big(\partial_{x_{\max\{i,j\}},2}^+\partial_{x_{\max\{i,j\}},1}^+\big)\check{}\dotsm \partial_{x_n,2}^+\partial_{x_n,1}^+\Omega\notag\\
&\quad+\sum_{i=1}^n\delta(y-x_i)\partial_{x_1,2}^+\partial_{x_1,1}^+\dotsm \big(\partial_{x_i,2}^+,\partial_{x_i,1}^+\big)\check{}\dotsm
\partial_{x_n,2}^+\partial_{x_n,1}^+\partial_{y,1}^-\partial_{x_i,1}^+\Omega.\label{fyd6}
\end{align}
We also note that
\begin{equation}\label{uyfd6e}
\delta(y-x_i)\partial_{y,1}^-\partial_{x_i,1}^+\Omega=\delta^2(y-x_i)\Omega=\delta(y-x_i)\Omega.
\end{equation}
Hence, by \eqref{tydy6}, \eqref{fyd6}, and \eqref{uyfd6e},
\begin{equation}\label{ftutr}
\int_X m (dy)\varphi(y) \partial_{y,1}^-\partial_{y,2}^-\, W(f^{(n)})=
W\left(\big(a_1^-(\varphi)+\eta a_2^-(\varphi)\big)f^{(n)}\right).\end{equation}

Similarly,
\begin{align*}
&\partial_{y,2}^+\partial_{y,2}^-\partial_{x_1,2}^+\partial_{x_1,1}^+\dotsm \partial_{x_n,2}^+\partial_{x_n,1}^+\Omega\\
&\quad= \partial_{y,2}^+\sum_{i=1}^n\delta(y-x_i)\partial_{x_1,2}^+\partial_{x_1,1}^+\dotsm \big(\partial_{x_i,2}^+\big)\check{}\,\partial_{x_i,1}^+\dotsm \partial_{x_n,2}^+\partial_{x_n,1}^+\Omega\\
&\quad= \sum_{i=1}^n\delta(y-x_i)\partial_{x_1,2}^+\partial_{x_1,1}^+\dotsm \big(\partial_{x_i,2}^+\big)\check{}\,\partial_{y,2}^+\partial_{x_i,1}^+\dotsm \partial_{x_n,2}^+\partial_{x_n,1}^+\Omega\\
&\quad= \sum_{i=1}^n\delta(y-x_i)\partial_{x_1,2}^+\partial_{x_1,1}^+\dotsm \partial_{x_i,2}^+\partial_{x_i,1}^+\dotsm \partial_{x_n,2}^+\partial_{x_n,1}^+\Omega,
\end{align*}
which implies
\begin{equation}\label{fdq5ru6tr765}
\int_X m (dy)\varphi(y)\partial_{y,2}^+\partial_{y,2}^- W(f^{(n)})=W(a^0(\varphi)f^{(n)}).\end{equation}

Finally,
\begin{align*}
&\partial_{y,1}^-\partial_{y,1}^+\partial_{x_1,2}^+\partial_{x_1,1}^+\dotsm \partial_{x_n,2}^+\partial_{x_n,1}^+\Omega\\
&\quad =\partial_{y,1}^-\partial_{x_1,2}^+\partial_{x_1,1}^+\dotsm \partial_{x_n,2}^+\partial_{x_n,1}^+\partial_{y,1}^+\Omega\\
&\quad=\eta\sum_{i=1}^n\delta(y-x_i) \partial_{x_1,2}^+\partial_{x_1,1}^+\dotsm \partial_{x_i,2}^+ \big(\partial_{x_i,1}^+\big)\check{}\,\dotsm \partial_{x_n,2}^+\partial_{x_n,1}^+\partial_{y,1}^+\Omega\\
&\qquad+\partial_{x_1,2}^+\partial_{x_1,1}^+\dotsm \partial_{x_n,2}^+\partial_{x_n,1}^+\partial^-_{y,1}\partial_{y,1}^+\Omega\\
&\quad= \eta\sum_{i=1}^n\delta(y-x_i)\partial_{x_1,2}^+\partial_{x_1,1}^+\dotsm \partial_{x_i,2}^+\partial_{x_i,1}^+\dotsm \partial_{x_n,2}^+\partial_{x_n,1}^+\Omega\\
&\qquad+\partial_{x_1,2}^+\partial_{x_1,1}^+\dotsm \partial_{x_n,2}^+\partial_{x_n,1}^+\Omega.
\end{align*}
Here we used that
\begin{align*}
\int_X m (dy)\varphi(y)\partial^-_{y,1}\partial_{y,1}^+\Omega&=\int_{X^2} m ^{\otimes 2}(dy\,du)\delta(y-u)\varphi(y)\partial^-_{y,1}\partial_{u,1}^+\Omega\\
&=\int_{X^2} m ^{\otimes 2}(dy\,du)\delta^2(y-u)\varphi(y)\Omega\\
&=\int_{X^2} m ^{\otimes 2}(dy\,du)\delta(y-u)\varphi(y)\Omega=\int_X\varphi(y)\, m (dy)\Omega.
\end{align*}
Thus,
\begin{equation}\label{yu567}
\int_X m (dy)\,\varphi(y)\partial^-_{y,1}\partial_{y,1}^+ W(f^{(n)})=
W(\eta a^0(\varphi)f^{(n)})+\int_X\varphi(y)\, m (dy) W(f^{(n)}).\end{equation}
Now, formula \eqref{tye65re} follows from \eqref{fufuyuyf}, \eqref{ytd6e6iff}, \eqref{ftutr}--\eqref{yu567}.
\end{proof}

\begin{corollary}\label{yuro68}
We have
$$\{r\Omega\mid r\in\mathfrak R\}=\{W(F)\mid F\in\mathscr F_{\mathrm{fin}}(\mathscr D)\}.$$
More precisely, for each $F\in\mathscr F_{\mathrm{fin}}(\mathscr D)$, there exists a unique $r\in\mathfrak R$ such that $W(F)=r\Omega$. This correspondence is given through formula \eqref{tye65re}. Vice versa, for each $r\in\mathfrak R$, there exists a unique $F\in\mathscr F_{\mathrm{fin}}(\mathscr D)$ such that $W(F)=r\Omega$ holds.
\end{corollary}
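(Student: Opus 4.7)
The plan is to establish the bijection by proving each inclusion separately, and then to extract uniqueness from a triangularity argument in the degree. Throughout, Proposition \ref{yur75ri7} serves as the bridge: it lets one transport a product of $\rho(\varphi)$'s acting on $\Omega$ into the concrete action of $\hat R(\varphi)$'s on $\mathscr F_{\mathrm{fin}}(\mathscr D)$.

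First, I would verify the inclusion $\{r\Omega \mid r \in \mathfrak R\} \subseteq \{W(F) \mid F \in \mathscr F_{\mathrm{fin}}(\mathscr D)\}$. The constant case follows from $\Omega = W(1)$, where $1 \in \R = \mathscr D^{\odot_{\mathrm{a}}0}$. For a monomial, combining $\rho(\varphi) = \kappa\sqrt{1+\eta\kappa^2}\,R(\varphi)$ with repeated application of \eqref{tye65re} gives
\[
\rho(\varphi_1)\cdots\rho(\varphi_n)\,\Omega = \bigl(\kappa\sqrt{1+\eta\kappa^2}\bigr)^n\,W\bigl(\hat R(\varphi_1)\cdots\hat R(\varphi_n)\cdot 1\bigr),
\]
and linearity extends this to every $r \in \mathfrak R$.

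Second, I would prove the reverse inclusion by induction on the top degree $N$ of $F$. The base case $N=0$ is immediate, since $W(c) = c\Omega$ for $c \in \R$. For the inductive step, linearity reduces to $f^{(N)} = \varphi_1 \odot \cdots \odot \varphi_N$. The essential observation is that in the decomposition \eqref{ftd5ed5d} of $\hat R(\varphi)$, only the summand $a^+(\varphi)$ raises degree (by one), while $a^0(\varphi)$, $a_1^-(\varphi)$, $a_2^-(\varphi)$, and the constant summand each preserve or lower degree. Consequently
\[
\hat R(\varphi_1)\cdots\hat R(\varphi_N)\cdot 1 = \varphi_1 \odot \cdots \odot \varphi_N + G,
\]
with $G \in \mathscr F_{\mathrm{fin}}(\mathscr D)$ of top degree at most $N-1$. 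Formula \eqref{tye65re} together with the inductive hypothesis $W(G) = r'\Omega$, $r' \in \mathfrak R$, then yield
\[
W(\varphi_1 \odot \cdots \odot \varphi_N) = \bigl(\kappa\sqrt{1+\eta\kappa^2}\bigr)^{-N}\rho(\varphi_1)\cdots\rho(\varphi_N)\,\Omega - r'\Omega = r\Omega
\]
for some $r \in \mathfrak R$, closing the induction.

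The two uniqueness statements follow from the same triangular structure. Viewing $\mathfrak R$ abstractly as the symmetric algebra on $\mathscr D$ generated by the symbols $\rho(\varphi)$ (with commutativity and linearity in $\varphi$ the only relations), the assignment $\Phi : r \mapsto F_r$ with $r\Omega = W(F_r)$ from the first step is \emph{unitriangular} with respect to the grading by monomial length: the degree-$N$ component of $\Phi(r)$ equals, up to the nonzero scalar $(\kappa\sqrt{1+\eta\kappa^2})^N$, the homogeneous degree-$N$ part of $r$ identified with a symmetric tensor via $\rho(\varphi_1)\cdots\rho(\varphi_N) \leftrightarrow \varphi_1 \odot \cdots \odot \varphi_N$. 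An induction on $N$ then forces $\Phi(r_1) = \Phi(r_2) \Rightarrow r_1 = r_2$, and the converse uniqueness is obtained symmetrically from the inductive construction of Part~2. The main obstacle is articulating this triangularity cleanly: one must ensure that $\mathfrak R$ carries no hidden relations among the $\rho(\varphi)$'s beyond commutativity and linearity in $\varphi$, so that the top-degree identification is genuinely an isomorphism on each graded piece and the triangular map is injective on homogeneous parts.
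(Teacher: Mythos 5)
Your proof is correct and follows essentially the same route as the paper's: both rest on Proposition~\ref{yur75ri7} together with the observation that in $\hat R(\varphi)$ only $a^+(\varphi)$ raises degree, so the correspondence $r\leftrightarrow F$ is unitriangular with respect to the degree filtration, which gives both existence directions and uniqueness. The only (cosmetic) difference is that the paper's induction peels off one factor $R(f_1)$ at a time via \eqref{tfur7}, whereas you expand the full product $\hat R(\varphi_1)\dotsm\hat R(\varphi_N)\cdot 1$ at once.
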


\begin{proof} By Proposition~\ref{yur75ri7}, we have, for $\varphi\in\mathscr D$,
$$R(\varphi)\Omega=W(\varphi)+\beta^{-1}\Omega.$$
Hence
\begin{equation}\label{tyde75i}
W(\varphi)=(R(\varphi)-\beta^{-1})\Omega.\end{equation}
By \eqref{ftd5ed5d} and \eqref{tye65re}, for any $n\ge2$ and $f_1,\dots,f_n\in\mathscr D$,
\begin{align}
&W(f_1\odot\dots\odot f_n)=R(f_1)W(f_2\odot\dots\odot f_n)\notag\\
&\quad-W\left(\bigg[(\beta+\beta^{-1}\eta)a^0(f_1)+a_1^-(f_1)+\eta a_2^-(f_1)+\beta^{-1}\int_X f_1(x)\, m (dx)\bigg]f_2\odot \dots\odot f_n\right).\label{tfur7}
\end{align}
Hence, for $F\in\mathscr F_{\mathrm{fin}}(\mathscr D)$, a unique representation of $W(F)$ as $r\Omega$ ($r\in\mathfrak R$) follows by induction on $n$ and formulas \eqref{tyde75i}, \eqref{tfur7}. The converse statement  follows immediately from Proposition~\ref{yur75ri7} by induction.
\end{proof}

Corollary \ref{yuro68}  implies that, for each $r\in\mathfrak R$, there exists a unique vector
\begin{equation}\label{drte6u}
F=(f^{(0)},f^{(1)},\dots,f^{(n)},0,0,\dots)\in\mathscr F_{\mathrm{fin}}(\mathscr D)\end{equation}such that
$$
r\Omega=f^{(0)}\Omega+\sum_{i=1}^n\int_{X^i} m ^{\otimes i}(dx_1\dotsm dx_i)f^{(i)}(x_1,\dots,x_i) \partial_{x_1,2}^+\partial_{x_1,1}^+\dotsm \partial_{x_i,2}^+\partial_{x_i,1}^+\Omega.$$
Thus, we can rigorously define a linear mapping $\tau:\mathfrak R\to\R$ by $\tau(r):=f^{(0)}$. However, since we used the renormalization, from our construction of $\tau$ is  not {\it a priori\/} clear whether $\tau$ is positive definite, i.e., whether $\tau(r^2)\ge0$ for each $r\in\mathfrak R$.  So, our next aim is to decide whether positive definiteness holds.

\subsection{Measure-valued L\'evy processes and positive definiteness of $\tau$}

We start with preliminaries on measure-valued L\'evy process. For more detail, see e.g.\ \cite{Kal,Kingman,Kingman1,KLV,HKPR}.

 Let $\mathbb M(X)$ denote the space of all Radon measures on $X$. We equip $\mathbb M(X)$ with the topology of vague convergence. Let $\mathscr B(\mathbb M(X))$ denote the corresponding Borel $\sigma$-algebra on $\mathbb M(X)$. Let $(\Omega,\mathscr A,P)$ be a provability space. A {\it random measure on $X$} is a measurable mapping $\gamma:\Omega\to\mathbb M(X)$. A {\it competely random measure\/} is a random measure $\gamma$ such that, for any  mutually disjoint  sets $A_1,\dots,A_n\in\mathscr B_0(X)$, the random variables $\gamma(A_1),\dots,\gamma(A_n)$ are independent.
 Here $\mathscr B_0(X)$ denotes the subset of $\mathscr B(X)$ that consists of all bounded sets from $\mathscr B(X)$.
  A {\it measure-valued L\'evy process\/} is a completely random measure $\gamma$ such that, for any sets $A_1,A_2\in\mathscr B_0(X)$ with $ m (A_1)= m (A_2)$, the random variables $\gamma(A_1)$ and $\gamma(A_2)$ are identically distributed.

 It follows from \cite{Kingman} that a random measure $\gamma$ is a measure-valued L\'evy process if and only if there exist a constant $c\ge0$ and a measure $\zeta$ on $\R_+:=(0,\infty)$ satisfying
 $$ \int_{\R_+}\min\{s,1\}\,\zeta(ds)<\infty$$
 such that the Laplace transform of $\gamma$ is given by
 \begin{equation}\label{bhd67}
 \mathbb E\left(e^{\langle f,\gamma\rangle}\right)=\exp\bigg[
 c\int_X f(x)\, m (dx)+\int_X\int_{\R_+} (e^{sf(x)}-1)\,\zeta(ds)\, m (dx) \bigg],\quad f\in C_0(X),\ f\le0.\end{equation}
Here $C_0(X)$ denotes the space of real-valued continuous functions on $X$ with compact support and $\langle f,\gamma\rangle:=\int_X f(x)\,\gamma(dx)$. The measure $\zeta$ is called the {\it L\'evy measure of the
measure-valued L\'evy process $\gamma$}.

Let $\mathbb K(X)$ denote the subset of $\mathbb M(X)$ consisting of all discrete Radon measures on $X$, i.e., Radon measures of the form
$\sum_{i\in I}s_i\delta_{x_i}$, where the set $I$ is either finite or countable, $s_i>0$,  and $\delta_{x_i}$ denotes here the Dirac measure with mass at $x_i$. We also assume that $x_i\ne x_j$ if $i\ne j$.  A {\it discrete random measure\/} is a random measure which takes values in $\mathbb K(X)$ a.s. Each measure-valued L\'evy process $\gamma$ for which $c=0$ in formula \eqref{bhd67} is a discrete random measure.

The space $\ddot\Gamma(X)$ of multiple configurations in $X$ is the subset of $\mathbb K(X)$ which consists of all Radon measures of the form $\sum_{i\in I}s_i\delta_{x_i}$  with $s_i\in\mathbb N$. A {\it point process
on $X$} is a random measure $\gamma$ which takes values in $\ddot\Gamma(X)$ a.s.

The configuration space $\Gamma(X)$ is defined as the subset of $\ddot\Gamma(X)$ which consists of all Radon measures of the form $\sum_{i\in I}\delta_{x_i}$. Each Radon measure $\sum_{i\in I}\delta_{x_i}$ can be identified with the locally finite set $\{x_i\mid i\in I\}\subset X$. A {\it simple point process on $X$} is a random measure $\gamma$ which takes values in $\Gamma(X)$ a.s.

It should be noted that if $c=0$ and  $\zeta(\R_+)<\infty$, the measure-valued L\'evy process with Fourier transform \eqref{bhd67}
has the property that a.s.\ $\gamma=\sum_{i\in I}s_i\delta_{x_i}$, where the set $\{x_i\mid i\in I\}$ is locally finite, i.e., a configuration in $X$. On the other hand, if $\zeta(\R_+)=\infty$, the set  $\{x_i\mid i\in I\}$ is a.s.\ dense in $X$.

By choosing $c=0$ and the measure $\zeta$ in \eqref{bhd67} to be $z\delta_1$ with $z>0$, one obtains the simple point process $\gamma$ with Laplace transform
$$\mathbb E\left(e^{\langle f,\gamma\rangle}\right)=\exp\bigg[
\int_X (e^{f(x)}-1)\,z m (dx) \bigg].$$
This $\gamma$ is called the {\it Poisson point process with intensity measure $z m $}, since for each $A\in\mathscr B_0(X)$, the random variable $\gamma(A)$ has Poisson distribution with parameter $z m (A)$.

\begin{theorem}\label{cyd5aq42q}
The functional $\tau$ on the real algebra $\mathfrak R$ is positive definite (i.e., $\tau(r^2)\ge0$ for each $r\in\mathfrak R$) if and only if $\eta\ge0$.

Furthermore, if $\eta=0$, then
\begin{equation}\label{cye57}
\tau(\rho(f_1)\dotsm\rho(f_n))=\mathbb E\big(\langle f_1,\gamma\rangle\dotsm \langle f_1,\gamma\rangle\big),\quad f_1,\dots,f_n\in\mathscr D, \end{equation}
where $\gamma$ is the Poisson point process on $X$ with intensity measure $\kappa^2 m $.

If $\eta>0$, then \eqref{cye57} holds with $\gamma$ being a negative binomial point process. More precisely, $\gamma$ is the measure-valued L\'evy process with Laplace  transform \eqref{bhd67} in which $c=0$ and
\begin{equation}\label{crte64}
\zeta=\frac1{\eta}\sum_{k=1}^\infty \bigg(\frac \eta{\eta+\kappa^{-2}}\bigg)^k\frac1k\,\delta_k.\end{equation}
For each $A\in\mathscr B_0(X)$, the distribution of the random variable $\gamma(A)$ is the negative binomial distribution
\begin{equation}\label{yur7e65ew}
 (1+\kappa^2\eta)^{- m (A)/\eta}\sum_{n=0}^\infty \bigg(\frac{\eta}{\eta+\kappa^{-2}}\bigg)^n\frac{\left(\frac{ m (A)}\eta\right)^{(n)}}{n!}\,\delta_n.\end{equation}
Here, we used the standard symbol $a^{(n)}:=a(a+1)(a+2)\dotsm (a+n-1)$, the so-called rising factorial.
\end{theorem}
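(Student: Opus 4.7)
The plan is to reduce, for each bounded Borel $A\subset X$, to a Jacobi-matrix analysis on the cyclic family $e_n:=\chi_A^{\odot n}\in\mathscr F_{\mathrm{fin}}(\mathscr D)$; to establish independence of the $\rho(\chi_{A_i})$ on disjoint sets directly from the structure of $\hat R$; and to read off the L\'evy measure from the Laplace transform. I would first exploit that $\chi_A^2=\chi_A$ to get, from the definitions of $a^+,a^0,a_1^-,a_2^-$,
\[
\hat R(\chi_A)e_n=e_{n+1}+\alpha_n e_n+\gamma_n e_{n-1},\quad \alpha_n:=(\beta+\beta^{-1}\eta)n+\beta^{-1}m(A),\quad \gamma_n:=n(m(A)+(n-1)\eta).
\]
Setting $\lambda:=\kappa\sqrt{1+\eta\kappa^2}$ and defining monic polynomials $p_n$ by $p_0:=1$, $p_{n+1}(x):=(x-\lambda\alpha_n)p_n(x)-\lambda^2\gamma_n p_{n-1}(x)$, an easy induction gives $p_n(\lambda\hat R(\chi_A))e_0=\lambda^n e_n$, and Proposition~\ref{yur75ri7} then yields $p_n(\rho(\chi_A))\Omega=\lambda^n W(e_n)$. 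Since the zeroth component of $e_n$ equals $\delta_{n,0}$, this gives $\tau(p_n(\rho(\chi_A)))=\delta_{n,0}$, and the classical orthogonal-polynomial argument based on the three-term recursion then delivers
\[
\tau\bigl(p_n(\rho(\chi_A))\,p_m(\rho(\chi_A))\bigr)=\delta_{n,m}\,\lambda^{2n}\prod_{k=1}^n\gamma_k. \qquad(\star)
\]

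With $(\star)$ in hand, the positivity dichotomy is immediate. If $\eta<0$, choose $A$ with $0<m(A)<|\eta|$; then $\gamma_1=m(A)>0$ but $\gamma_2=2(m(A)+\eta)<0$, so $\tau(p_2(\rho(\chi_A))^2)=\lambda^4\gamma_1\gamma_2<0$, contradicting positive-definiteness. If $\eta\ge 0$, every $\gamma_n$ is positive, so Favard's theorem (determinacy from polynomial growth of the $\gamma_n$) exhibits $\tau$, restricted to the subalgebra generated by $\rho(\chi_A)$, as the moment functional of a unique probability measure $\mu_A$ on $\R$. Matching the recursion coefficients $\lambda\alpha_n,\lambda^2\gamma_n$ with the classical Charlier ($\eta=0$) or Meixner ($\eta>0$) three-term recursions identifies $\mu_A$ as $\mathrm{Poisson}(\kappa^2 m(A))$ or $\mathrm{NB}(m(A)/\eta,\,\eta\kappa^2/(1+\eta\kappa^2))$; the identity $\binom{n+r-1}{n}=r^{(n)}/n!$ brings the latter into the form \eqref{yur7e65ew}.

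To extend to multiple sets, observe that for disjoint $A_1,\dots,A_k$ the identities $\chi_{A_i}\chi_{A_j}=0$ kill every cross-term in $a^0,a_1^-,a_2^-$, so $\hat R(\chi_{A_1})\cdots\hat R(\chi_{A_k})\Omega_0$ factorizes; a multivariate version of $(\star)$ then forces joint independence of $\rho(\chi_{A_1}),\dots,\rho(\chi_{A_k})$ under $\tau$ with the marginals just identified. These finite-dimensional distributions uniquely determine a measure-valued L\'evy process $\gamma$ on $X$ with reference measure $m$, and I would read off its L\'evy measure by expanding
\[
\log\mathbb E(e^{t\gamma(A)})=-\tfrac{m(A)}{\eta}\bigl[\log(1-pe^t)-\log(1-p)\bigr]=\tfrac{m(A)}{\eta}\sum_{k\ge 1}\frac{p^k}{k}(e^{kt}-1),\qquad p:=\frac{\eta}{\eta+\kappa^{-2}},
\]
and matching with the L\'evy-Khintchine integral $m(A)\int(e^{ts}-1)\zeta(ds)$; this gives \eqref{crte64} for $\eta>0$ and $\zeta=\kappa^2\delta_1$ for $\eta=0$ (by continuity as $\eta\to 0$, or directly from $\log\mathbb E(e^{t\gamma(A)})=\kappa^2 m(A)(e^t-1)$). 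Finally, \eqref{cye57} extends from indicators to $f_1,\dots,f_n\in\mathscr D$ by a standard multilinearity/density argument.

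\textbf{Main obstacle.} The key technical work will be the orthogonality identity $(\star)$ and its multivariate generalization for disjoint sets; both come down to careful bookkeeping with the operators $a^+, a^0, a_1^-, a_2^-$ combined with the identities $\chi_A^2=\chi_A$ and $\chi_{A_i}\chi_{A_j}=0$, after which the identification of the marginals $\mu_A$ and of the L\'evy measure $\zeta$ reduces to routine comparisons with classical formulas.
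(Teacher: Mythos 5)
Your argument is correct in substance, and I verified the key computation: with $e_n=\chi_A^{\odot n}$ one indeed has $\hat R(\chi_A)e_n=e_{n+1}+\alpha_ne_n+\gamma_ne_{n-1}$ with your $\alpha_n$, $\gamma_n$, the induction giving $p_n(\rho(\chi_A))\Omega=\lambda^nW(e_n)$ goes through, and the recursion coefficients $\lambda\alpha_n$, $\lambda^2\gamma_n$ do match the Charlier (resp.\ Meixner) recurrences for $\mathrm{Poisson}(\kappa^2m(A))$ (resp.\ $\mathrm{NB}(m(A)/\eta,\eta/(\eta+\kappa^{-2}))$), while $\gamma_1\gamma_2<0$ for $0<m(A)<|\eta|$ kills positivity when $\eta<0$. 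But your route is genuinely different from the paper's. The paper does not perform a per-set Jacobi-matrix analysis at all: it invokes its Theorem~\ref{dr5e5} (imported from the author's earlier work on Meixner-type Jacobi fields of L\'evy white noise), which says that the operator $a^+(\varphi)+\lambda a^0(\varphi)+a_1^-(\varphi)+\eta a_2^-(\varphi)+\mathrm{const}$ is, for $\lambda=\beta+\beta^{-1}\eta>2\sqrt\eta$, exactly the Jacobi field of a measure-valued L\'evy process $\gamma_{\lambda,\eta}$ acting on orthogonalized continuous polynomials; combining this with Proposition~\ref{yur75ri7} gives $\tau(R(f_1)\dotsm R(f_n))=\mathbb E(\langle f_1,\gamma_{\lambda,\eta}\rangle\dotsm\langle f_n,\gamma_{\lambda,\eta}\rangle)$ for arbitrary $f_i\in\mathscr D$ in one stroke, after which $\gamma=\kappa\sqrt{1+\eta\kappa^2}\,\gamma_{\lambda,\eta}$ and Remark~\ref{dye7i} (via Chihara) yield \eqref{crte64}, \eqref{yur7e65ew}; for $\eta<0$ the paper computes $\tau(W(f\otimes f)^2)=2(\int f^2\,dm)^2+2\eta\int f^4\,dm$ and passes to small cubes — essentially your counterexample. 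What your approach buys is self-containedness (Favard plus classical Charlier/Meixner identifications instead of the external classification theorem) at the price of two steps you should not leave implicit: first, $\hat R(f)$ is tridiagonal on $\{f^{\odot n}\}$ only because $\chi_A^2=\chi_A$, and $\chi_A\notin\mathscr D$, so you must extend $\tau$ to polynomials in $\rho(\chi_A)$ by approximation (the moment formulas from Corollary~\ref{yuro68} are polynomials in integrals $\int f_{i_1}\dotsm f_{i_k}\,dm$, so this is continuous) and transfer the $\eta<0$ negativity back to $\mathfrak R$ the same way; second, the passage from independent, determinate marginals on disjoint sets to an actual completely random measure and to \eqref{cye57} for smooth $f_i$ needs the additivity check $\mathrm{NB}(r_1,p)*\mathrm{NB}(r_2,p)=\mathrm{NB}(r_1+r_2,p)$, a Kolmogorov-extension/existence statement, and a multilinearity-plus-density argument. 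All of these are routine, so I regard the proposal as a valid alternative proof.
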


The proof of Theorem \ref{cyd5aq42q} is based on the property of orthogonal polynomials of a L\'evy white noise proved in \cite[Theorem 2.1 and Corollaries~2.1, 2.3]{Lyt} and \cite{BLM}, see also \cite[Theorem~1.2]{BLR}. We will now briefly explain this result in the special case of a measure-valued L\'evy process.

Let $\gamma$ be a measure-valued L\'evy process such that $c=0$ in \eqref{bhd67} (so that $\gamma$ is a discrete random measure).
We denote $\zeta'(ds):=s^2\zeta(ds)$, the so-called {\it Kolmogorov measure of the measure-valued L\'evy process $\gamma$}. We assume that $\zeta'$ is a probability measure on $\R_+$, and furthermore,
\begin{equation}\label{dre64e}\int_{\R_+}e^{\varepsilon s}\,\zeta'(ds)<\infty\quad\text{for some $\varepsilon>0$}.\end{equation}
 The latter assumption implies that the set of polynomials is dense in $L^2(\R_+,\zeta')$. If the support of the measure $\zeta'$ has infinitely many points, we will denote by $(p_k)_{k=0}^\infty$ the system of monic polynomials that are orthogonal with respect to $\zeta'$. These polynomials satisfy the recurrence relation
\begin{equation}\label{tre6}
sp_k(s)=p_{k+1}(s)+b_kp_k(s)+a_kp_{k-1}(s),\quad k=0,1,2,\dots\end{equation}
with $p_{-1}(s):=0$, $a_k>0$, and $b_k\in\R$.

Let assume that the $\sigma$-algebra $\mathscr A$ from the probability space $(\Omega,\mathscr A,P)$ is the minimal $\sigma$-algebra with respect to which $\gamma(A)$ is measurable for each $A\in\mathscr B_0(X)$. We denote by $\mathscr {CP}$ the {\it set of continuous polynomials of $\gamma$}, i.e., the set of random variables of the form
\begin{equation}\label{vtfr75r}
f^{(0)}+\sum_{i=1}^n\langle f^{(i)},\gamma^{\otimes i}\rangle,
\end{equation}
where $f^{(i)}\in\mathscr D^{\odot_{\mathrm{a}}i}$ and $n\in\mathbb N$. If $f^{(n)}\ne0$, we call the random variable in \eqref{vtfr75r} a continuous polynomial of $\gamma$ of degree $n$. Condition \eqref{dre64e} implies that $\mathscr {CP}$ is a dense subset of $L^2(\Omega,P)$.

Let us denote $\mathscr {CP}_n$ the subset of $\mathscr {CP}$ which consists of all polynomials of $\gamma$ of degree $\le n$. Let ${\mathscr {MP}}_n$ denote the closure of $\mathscr {CP}_n$ in  $L^2(\Omega,P)$ ({\it measurable polynomials of degree $\le n$}). Let $\mathscr{OP}_n:=\mathscr {MP}_n\ominus\mathscr {MP}_{n-1}$, where $\ominus$ means the orthogonal difference in $L^2(\Omega,P)$ ({\it orthogonal polynomials of degree $n$}). As a result, we get the orthogonal decomposition $L^2(\Omega,P)=\bigoplus_{n=0}^\infty \mathscr{OP}_n$.
For $f^{(n)}\in\mathscr D^{\odot_{\mathrm{a}}n}$, we denote by $P(f^{(n)})$ the orthogonal projection of the monomial $\langle f^{(n)},\gamma^{\otimes n}\rangle$
onto $ \mathscr{OP}_n$. Note that $P(f^{(n)})$ does not need to belong to $\mathscr {CP}$. For $F\in\mathscr F_{\mathrm{fin}}(\mathscr D)$ as in \eqref{drte6u}, we denote $P(F):=\sum_{i=0}^n P(f^{(i)})$.
The set of all such random variables we denote by $\mathscr{OCP}$ ({\it orthogonalized continuous polynomials}).

\begin{theorem}\label{dr5e5}
Let $\gamma$ be a measure-valued L\'evy process that satisfies the above assumptions. We have $\mathscr{OCP}\subset \mathscr{CP}$ (and, in fact, $\mathscr{OCP}= \mathscr{CP}$) if and only is there exist constants $\eta\ge0$ and $\lambda>0$ with $\lambda\ge 2\sqrt\eta$ such that: if $\eta=0$ then $\zeta=\lambda^{-2}\delta_\lambda$; and if  $\eta>0$ then   the measure $\zeta'$ has infinitely many points in its support and the system $(p_k)_{k=0}^\infty$ of monic polynomials that are orthogonal with respect to $\zeta'$ satisfies the recurrence relation   \eqref{tre6} with $a_k=\eta k(k+1)$ and $b_k=\lambda(k+1)$. Furthermore, for any $\eta$ and  $\lambda$ as above, we have, for $\varphi\in\mathscr D$ and $F\in\mathscr F_{\mathrm{fin}}(\mathscr D)$,
$$ \langle \varphi,\gamma\rangle P(F)=P\left(\left[
a^+(\varphi)+\lambda a^0(\varphi)+a_1^-(\varphi)+\eta a^-_2(\varphi)+2\big(\lambda+\sqrt{\lambda^2-4\eta}\big)^{-1}\right]F\right),$$
where the operators $a^+(\varphi)$, $a^0(\varphi)$, and $a^-_1(\varphi)$, and  $a^-_2(\varphi)$  were defined in Proposition~\ref{yur75ri7}.
\end{theorem}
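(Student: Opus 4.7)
The theorem is essentially a specialization of the Meixner--class characterization of L\'evy white noises proven in \cite{Lyt} (Theorem~2.1 and Corollaries~2.1, 2.3); see also \cite{BLM} and \cite[Theorem~1.2]{BLR}. Accordingly, the plan is to reduce the statement to those results rather than rederive it from scratch. The first step is to identify the measure-valued L\'evy process $\gamma$ with a (generalized) L\'evy white noise on $X$ whose one-dimensional marginals are determined by $\zeta$. Under the exponential-moment assumption \eqref{dre64e}, continuous polynomials of $\gamma$ are dense in $L^2(\Omega, P)$, and the Fock-type decomposition $L^2(\Omega, P) = \bigoplus_n \mathscr{OP}_n$ is well defined. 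The multiplication operator $\langle \varphi, \gamma \rangle$ then decomposes into a creation, a neutral, and (at most two) annihilation components relative to this grading, with coefficients governed by the Jacobi parameters $(a_k, b_k)$ of the measure $\zeta'$ and by the test function $\varphi$.

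The second step is to extract from this decomposition the equivalence $\mathscr{OCP}\subset\mathscr{CP} \Longleftrightarrow$ (the Meixner--class conditions). In the forward direction one uses that the annihilation components of $\langle\varphi,\gamma\rangle$, when applied to a symmetric tensor $f^{(n)}\in\mathscr{D}^{\odot_{\mathrm a}n}$, must produce a sum of tensors in $\mathscr{D}^{\odot_{\mathrm a}(n-1)}$. A short combinatorial argument (exactly as in \cite{Lyt}) shows that this forces $a_k$ to be a quadratic polynomial in $k$ and $b_k$ to be an affine polynomial in $k$. The monic normalization $p_0\equiv 1$ together with the Favard positivity $a_k>0$ then pins these polynomials down to $a_k=\eta k(k+1)$ and $b_k=\lambda(k+1)$ for some $\eta\ge 0$ and $\lambda>0$; the requirement that $\zeta'$ live on $\R_+$ forces $\lambda\ge 2\sqrt{\eta}$. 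The degenerate case $\eta=0$ collapses $\zeta'$ to a single atom at $\lambda$, equivalently $\zeta=\lambda^{-2}\delta_{\lambda}$; the case $\eta>0$ gives the Meixner polynomials of the negative binomial/gamma family. Conversely, once $(a_k,b_k)$ take this form, the three-term decomposition automatically preserves $\mathscr{CP}$, yielding $\mathscr{OCP}=\mathscr{CP}$.

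The third step is to identify the explicit action formula. Substituting $a_k=\eta k(k+1)$, $b_k=\lambda(k+1)$ into the generic three-term decomposition of $\langle\varphi,\gamma\rangle$ on homogeneous orthogonalized polynomials produces precisely the four operator symbols $a^+(\varphi)$, $\lambda a^0(\varphi)$, $a_1^-(\varphi)$, and $\eta a_2^-(\varphi)$ defined in Proposition~\ref{yur75ri7}; the quadratic coefficient $\eta k(k+1)$ is what makes the second annihilation component $\eta a_2^-(\varphi)$ appear, while $b_k=\lambda(k+1)$ manifests as the single neutral term $\lambda a^0(\varphi)$. The constant term $2(\lambda+\sqrt{\lambda^2-4\eta})^{-1}$ is identified by applying the formula at $F\equiv 1$ and reading off the first moment: it equals $\int_{\R_+} s\,\zeta(ds)=\int_{\R_+}s^{-1}\zeta'(ds)$, which one computes in closed form from the Meixner generating function and simplifies to $(\lambda-\sqrt{\lambda^2-4\eta})/(2\eta)=2(\lambda+\sqrt{\lambda^2-4\eta})^{-1}$ (with the obvious limit $1/\lambda$ when $\eta\to 0$, matching the Poisson case $\zeta=\lambda^{-2}\delta_\lambda$).

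The hardest part is not the algebraic identification but the verification that the annihilation components of $\langle\varphi,\gamma\rangle$ really do act on the $\mathscr{OP}_n$ by the combinatorial formulas of $a_1^-(\varphi)$ and $a_2^-(\varphi)$, as opposed to some more complicated contraction that merely happens to have the same leading behaviour. This is precisely what \cite{Lyt} establishes by constructing an isometric isomorphism between $L^2(\Omega,P)$ and a concrete extended symmetric Fock space over $L^2(X,m)$ in which the relevant operators act in the required polynomial way. I would invoke this isomorphism as a black box; all remaining work is bookkeeping of the constants displayed above.
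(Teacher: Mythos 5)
Your proposal is correct and follows essentially the same route as the paper: Theorem~\ref{dr5e5} is stated there without proof, precisely as a specialization to measure-valued L\'evy processes of \cite[Theorem 2.1 and Corollaries~2.1, 2.3]{Lyt}, \cite{BLM} and \cite[Theorem~1.2]{BLR}, which is exactly the reduction you carry out. Your bookkeeping of the constants (in particular identifying the vacuum coefficient with $\int_{\R_+}s\,\zeta(ds)=2\big(\lambda+\sqrt{\lambda^2-4\eta}\big)^{-1}$) is consistent with Remark~\ref{dye7i} and with the way the theorem is used in the proof of Theorem~\ref{cyd5aq42q}.
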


\begin{remark}\label{dye7i} If $\eta>0$ and $\lambda>2\sqrt\eta$, we get from Theorem~\ref{dr5e5} and e.g.\ \cite[Chapter VI, Section~3]{Chihara} that the L\'evy measure of the corresponding measure-valued L\'evy process $\gamma$ is
$$\zeta=\frac1\eta\sum_{k=1}^\infty\bigg(\frac{\eta}{\upsilon^2}\bigg)^k\frac1k\,\delta_{(\upsilon-\frac\eta\upsilon)k}\,,$$
with
$$\upsilon:=(\lambda+\sqrt{\lambda^2-4\eta})/2,$$
 and for $A\in\mathscr B_0(X)$, the distribution of the random variable $\gamma(A)$ is the negative binomial distribution
 $$\bigg(\frac{\upsilon^2-\eta}{\upsilon^2}\bigg)^{ m (A)/\eta}\sum_{k=0}^\infty\bigg(\frac{\eta}{\upsilon^2}\bigg)^k\frac1{k!}\,\bigg(\frac{ m (A)}\eta\bigg)^{(k)}\,\delta_{(\upsilon-\frac\eta\upsilon)k}\,.$$
 Thus, $\gamma$ is a {\it negative binomial random measure}.

 In the case where $\eta>0$ and $\lambda=2\sqrt\eta$, the L\'evy measure of $\gamma$ is
 $$\zeta(ds)=\frac1{s\eta}\,e^{-\frac{s}{\sqrt\eta}}\,ds,$$
and for $A\in\mathscr B_0(X)$, the distribution of the random variable $\gamma(A)$ is the gamma distribution
$$\left[\eta^{\frac{ m (A)}{2\eta}}\Gamma\bigg(\frac{ m (A)}\eta\bigg)\right]^{-1}u^{\frac{ m (\Delta)}\eta-1}\, e^{-\frac u{\sqrt\eta}}\,\chi_{\R_+}(u)\, du.$$
Thus, $\gamma$ in this case is the {\it gamma random measure}, see e.g.\ \cite{KSSU,KL,TsVY}. The Laplace transform of $\gamma$ can also be written in the form
\begin{equation}\label{vyr7}
\mathbb E\left(e^{\langle f,\gamma\rangle}\right)=\exp\bigg[-\frac1\eta\int_X\log\big(1-\sqrt\eta\, f(x)\big)\, m (dx)\bigg],\quad f\in C_0(X),\
 f\le0.
\end{equation}
\end{remark}

\begin{proof}[Proof of Theorem \ref{cyd5aq42q}] Let
$\eta>0$. Set $\lambda=\beta+\beta^{-1}\eta$. We have
$$2\big(\lambda+\sqrt{\lambda^2-4\eta}\big)^{-1}=\beta^{-1}.$$
As easily seen, $\beta>\sqrt\eta$. Hence,
$$\lambda=\beta+\frac\eta\beta>2\sqrt\eta.$$
Let $\gamma_{\lambda,\eta}$ be the measure-valued L\'evy process on $X$ from Theorem~\ref{dr5e5} that corresponds to the parameters $\lambda$, $\eta$. By Proposition~\ref{yur75ri7} and  Theorem~\ref{dr5e5}, we get
\begin{equation}\label{rts5w}
\tau\big(R(f_1)\dotsm R(f_n)\big)=\mathbb E\big(\langle f_1,\gamma_{\lambda,\eta}\rangle\dotsm \langle f_n,\gamma_{\lambda,\eta}\rangle\big),\quad f_1,\dots,f_n\in\mathscr D.\end{equation}
We define the measure-valued L\'evy process $\gamma:=\kappa\sqrt{1+\eta\kappa^2}\,\gamma_{\lambda,\eta}$. Let $\zeta$ and $\zeta_{\lambda,\eta}$ denote the L\'evy measure of $\gamma$ and $\gamma_{\lambda,\eta}$, respectively. Then $\zeta$ is the pushforward of the measure  $\zeta_{\lambda,\eta}$ under the mapping $\R_+\ni s\mapsto \kappa\sqrt{1+\eta\kappa^2}\,s\in\R_+$. By \eqref{vcyre6jr876r7} and  \eqref{rts5w}, we get \eqref{cye57}. Formulas \eqref{crte64}, \eqref{yur7e65ew} follow by direct calculations from Remark~\ref{dye7i}. Note that, since the L\'evy measure $\zeta$ is concentrated on $\mathbb N$, $\gamma$ is a point process. The proof for $\eta=0$ is analogous.

Let us now prove that $\tau$ is not positive definite for $\eta<0$. Assume the contrary. Let $f\in\mathscr D$. We easily calculate
$$\tau\left(W(f\otimes f)^2\right)=2\bigg(\int_X f^2(x)\, m (dx)\bigg)^2+2\eta\int_Xf^4(x)\, m (dx)\ge0.$$
From here we conclude by approximation that, for any cube $A$ in $X$,
$$ m (A)^2+\eta m (A)\ge0,$$
which obviously fails for $A$ small enough.
\end{proof}

\begin{remark}
In view of Theorem \ref{cyd5aq42q}, we can rigorously understand $\rho(\varphi)$ ($\varphi\in\mathscr D$) as the operator of multiplication by $\langle\varphi,\gamma\rangle$ in the Hilbert space $L^2(\Omega,P)$, which maps  $\mathscr {CP}$  into itself.
It follows from \cite[Lemma~2.1 and Theorem~2.1]{Lyt} that each $\rho(\varphi)$ is essentially self-adjoint on $\mathscr {CP}$.
\end{remark}

\begin{corollary}Let $\eta>0$. Denote by $\tau_\kappa$ the state $\tau$ on $\mathfrak R$ which corresponds to the operator $T=\kappa^2\mathbf 1$. Let $\gamma$ be the gamma random measure with Laplace transform \eqref{vyr7}. Then, for any $f_1,\dots,f_n\in\mathscr D$, we have
\begin{align*}\lim_{\kappa\to\infty}\tau_\kappa\big(R(f_1)\dotsm R(f_n)\big)&=
\lim_{\kappa\to\infty}\left(\kappa\sqrt{1+\eta\kappa^2}\right)^{-n}\tau_\kappa\big(\rho(f_1)\dotsm \rho(f_n)\big)\\
&=\mathbb E\big(\langle f_1,\gamma\rangle\dotsm \langle f_n,\gamma\rangle\big).\end{align*}
\end{corollary}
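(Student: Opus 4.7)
My plan is to reduce the corollary to moment computations for the measure-valued Lévy processes of Theorem~\ref{dr5e5} and then exploit the fact that these moments are \emph{polynomial} in the parameter $\lambda$. The first equality stated in the corollary is immediate from \eqref{vcyre6jr876r7}, which yields $\rho(f_i) = \kappa\sqrt{1+\eta\kappa^2}\, R(f_i)$, hence $\tau_\kappa(\rho(f_1)\dotsm\rho(f_n)) = (\kappa\sqrt{1+\eta\kappa^2})^n\tau_\kappa(R(f_1)\dotsm R(f_n))$. So the only substantive content is the identification of the limit.

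For the second equality, I will recall from the proof of Theorem~\ref{cyd5aq42q} (see formula \eqref{rts5w}) that
$$\tau_\kappa(R(f_1)\dotsm R(f_n))=\mathbb{E}\bigl(\langle f_1,\gamma_{\lambda_\kappa,\eta}\rangle\dotsm\langle f_n,\gamma_{\lambda_\kappa,\eta}\rangle\bigr),$$
where $\lambda_\kappa:=\beta_\kappa+\beta_\kappa^{-1}\eta$ and $\beta_\kappa:=\sqrt{\eta+\kappa^{-2}}$. As $\kappa\to\infty$ we have $\beta_\kappa\searrow\sqrt\eta$, hence $\lambda_\kappa\searrow 2\sqrt\eta$; by Remark~\ref{dye7i}, the measure-valued Lévy process $\gamma_{\lambda,\eta}$ at $\lambda=2\sqrt\eta$ is precisely the gamma random measure $\gamma$ with Laplace transform \eqref{vyr7}.

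To pass to the limit, I will apply Theorem~\ref{dr5e5} iteratively starting from the vacuum element $(1,0,0,\dots)\in\mathscr F_{\mathrm{fin}}(\mathscr D)$, which corresponds to the constant random variable $1$ under $P$. Writing
$$\mathbf B_\lambda(\varphi):=a^+(\varphi)+\lambda a^0(\varphi)+a_1^-(\varphi)+\eta a_2^-(\varphi)+c(\lambda)\int_X\varphi(x)\, m (dx),\qquad c(\lambda):=\frac{2}{\lambda+\sqrt{\lambda^2-4\eta}},$$
one iterates Theorem~\ref{dr5e5} to obtain
$$\langle f_1,\gamma_{\lambda,\eta}\rangle\dotsm\langle f_n,\gamma_{\lambda,\eta}\rangle=P\bigl(\mathbf B_\lambda(f_1)\dotsm\mathbf B_\lambda(f_n)(1,0,0,\dots)\bigr).$$
Taking expectation kills every component of positive degree (they lie in $\mathscr{OP}_k$ with $k\ge 1$, which is orthogonal to constants), so $\mathbb{E}\bigl(\langle f_1,\gamma_{\lambda,\eta}\rangle\dotsm\langle f_n,\gamma_{\lambda,\eta}\rangle\bigr)$ is exactly the zeroth component of $\mathbf B_\lambda(f_1)\dotsm\mathbf B_\lambda(f_n)(1,0,0,\dots)$. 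Because each application of a $\mathbf B_\lambda$ introduces only the scalars $1,\lambda,\eta$ and $c(\lambda)$ together with $ m $-integrals of products of the $f_i$'s, this zeroth component is a fixed polynomial in $\lambda,\eta,c(\lambda)$ whose coefficients are finite sums of integrals against $ m ^{\otimes k}$ of products of the $f_i$'s (with $k\le n$); crucially, it does not depend on $\kappa$ except through $\lambda_\kappa$.

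Continuity of this polynomial in $\lambda$ (and of $c(\lambda)$) on $[2\sqrt\eta,\infty)$, combined with $\lambda_\kappa\to 2\sqrt\eta$ and $c(\lambda_\kappa)\to 1/\sqrt\eta$, yields the convergence
$$\lim_{\kappa\to\infty}\tau_\kappa(R(f_1)\dotsm R(f_n))=\mathbb{E}\bigl(\langle f_1,\gamma\rangle\dotsm\langle f_n,\gamma\rangle\bigr),$$
which is the desired equality. The only non-trivial step is the iterated moment formula via $P$; I expect this to be the main (though still short) obstacle, since one has to be careful that the higher-order terms produced by $\mathbf B_\lambda$ indeed lie in the higher orthogonal polynomial components and hence drop out when taking expectation. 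Everything else is continuity of a polynomial in one variable.
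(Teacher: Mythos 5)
Your proposal is correct and follows essentially the same route as the paper: identify $\tau_\kappa\big(R(f_1)\dotsm R(f_n)\big)$ with the moments of $\gamma_{\lambda_\kappa,\eta}$ via Proposition~\ref{yur75ri7} and Theorem~\ref{dr5e5}, and pass to the limit using $\lambda_\kappa\to 2\sqrt\eta$, $\beta_\kappa^{-1}\to\eta^{-1/2}$ together with Remark~\ref{dye7i} identifying the $\lambda=2\sqrt\eta$ case as the gamma random measure. Your explicit justification of the limit interchange (the zeroth Fock component of $\mathbf B_\lambda(f_1)\dotsm\mathbf B_\lambda(f_n)(1,0,\dots)$ being a polynomial in $\lambda$ and $c(\lambda)$) spells out what the paper leaves as ``follows immediately,'' but it is the same argument.
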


\begin{proof}The statement follows immediately  from Proposition~\ref{yur75ri7}, Theorem~\ref{dr5e5}, and Remark~\ref{dye7i} if we note that
$$\lim_{\kappa\to\infty}\left((\eta+\kappa^{-2})^{1/2}+(\eta+\kappa^{-2})^{-1/2}\eta\right)=2\sqrt\eta,\quad  \lim_{\kappa\to\infty}(\eta+\kappa^{-2})^{-1/2}=\eta^{-1/2}.$$
\end{proof}

\begin{center}
{\bf Acknowledgements}\end{center}
The author acknowledges the financial support of the the SFB 701 ``Spectral
structures and topological methods in mathematics'', Bielefeld University, and of the 
Polish
National Science Center, grant no.\ Dec-2012/05/B/ST1/00626.  I am grateful to M.~Bo\.zejko, G.A. Goldin, Y. Kondratiev, A. Vershik, and J. Wysocza\'nski for useful discussions.
I  would  like to thank the referees for a careful reading of the
manuscript and making useful comments and suggestions.


\begin{thebibliography}{99}

\bibitem{AFS} Accardi, L., Franz, U., Skeide, M.:  Renormalized squares of white noise and other non-Gaussian noises as L\'evy processes on real Lie algebras. Comm. Math. Phys. 228 (2002), 123--150.

\bibitem{ALV} Accardi, L.,  Lu, Y.G., Volovich, I.V.:  White noise approach to classical and quantum stochastic calculi. Centro Vito Volterra, Universita di Roma Tor Vergata, Preprint 275, 1999.


\bibitem{AWoods} Araki, H., Woods, E.: Representations of the C.C.R. for a nonrelativistic infinite free Bose gas, J. Math. Phys. 4 (1963), 637--662.

\bibitem{AWyss} Araki, H., Wyss, W.: Representations of canonical anticommutation relation, Helv. Phys. Acta 37 (1964), 136--159.

\bibitem{BLM} Berezansky, Yu.M., Lytvynov, E., Mierzejewski, D.A.: The Jacobi field of a L\'evy process.  Ukrainian Math. J. 55 (2003), 853--858.

\bibitem{Berezin1} Berezin, F.A.:  The method of second quantization. (In Russian) Nauka, Moscow, 1965. English translation: 
 Academic Press, New York--London, 1966.

\bibitem{Berezin2} Berezin, F.A.: Some remarks on the representations of commutation relations. (In Russian) Uspehi Mat. Nauk 24 (1969), no.~4, 65--88.



\bibitem{BS} Bo\.zejko, M., Speicher, R.: Completely positive maps on Coxeter groups, deformed commutation relations, and operator spaces. Math. Ann. 300 (1994),  97--120.

\bibitem{BLR}  Bo\.zejko, M., Lytvynov, E., Rodionova, I.: An extended anyon Fock space and noncommutative Meixner-type orthogonal polynomials in infinite dimensions. Russian Math. Surveys 70 (2015), 857--899.

\bibitem{BLW}  Bo\.zejko, M., Lytvynov, E., Wysocza\'nski, J.: Noncommutative L\'evy processes for generalized (particularly anyon) statistics. Comm. Math. Phys. 313 (2012),  535--569.

\bibitem{BR}  Bratteli, O.,  Robinson, D.W.: Operator algebras and quantum-statistical mechanics. II. Equilibrium states. Models in quantum-statistical mechanics. Springer-Verlag, New York-Berlin, 1981.

\bibitem{Chihara}  Chihara, T.S.: An introduction to orthogonal polynomials, Gordon and Breach, New York, 1978.

\bibitem{DK} Daletskii, A., Kalyuzhnyi, A.:
$L^2$ dimensions of spaces of braid-invariant harmonic forms. J. Geom. Phys. 62 (2012),  1309--1322.

\bibitem{DA}  Dell'Antonio, G.F.: Structure of the algebra of some free systems, Comm. Math. Phys. 9 (1968), 81--117.

\bibitem{DAFT} Dell'Antonio, G., Figari, R., Teta, A.: Statistics in space dimension two.
Lett. Math. Phys. 40 (1997),  235--256.


\bibitem{DG}  Derezi\'nski, J., G\'erard, C.:  Mathematics of quantization and quantum fields. Cambridge University Press, Cambridge, 2013.

\bibitem{FSSS} Frappat, L., Sciarrino, A.,  Sciuto, S., Sorba, P.: Anyonic realizations of the quantum affine Lie ailgebra $U_q(\widehat A_{N-1})$. Phys. Lett. B 369 (1996), 313--324.


 \bibitem{VGG1}    Gel'fand, I.M.,  Graev, M.I.,  Vershik,  A.M.:  Models of representations of current groups.   Representations of Lie groups and Lie algebras (Budapest, 1971), 121--179, Akad. Kiad\'o, Budapest, 1985.


\bibitem{GM} Goldin, G.A., Majid, S.: On the Fock space for nonrelativistic anyon fields and braided tensor products.
J. Math. Phys. 45, 3770--3787  (2004)



\bibitem{GMS1} Goldin, G.A.,  Menikoff, R.,  Sharp, D.H.: Particle statistics from induced representations of a local current group. J. Math. Phys. 21 (1980), 650--664.


\bibitem{GMS2} Goldin, G.A., Menikoff, R., Sharp, D.H.: Representations of a local current algebra in nonsimply connected space and the Aharonov--Bohm effect. J. Math. Phys. 22 (1981),  1664--1668.

\bibitem{Goldin_Sharp} Goldin, G.A., Sharp, D.H.: Diffeomorphism groups, anyon fields, and $q$ commutators. Phys. Rev. Lett. 76, 1183--1187  (1996)


\bibitem{HKPR} Hagedorn, D.,  Kondratiev, Y., Pasurek, T., R\"ockner, M.: Gibbs states over the cone of discrete measures. J. Funct. Anal. 264 (2013),  2550--2583.

\bibitem{Hiai}  Hiai, F.: $q$-deformed Araki--Woods algebras. Operator algebras and mathematical physics (Constanta, 2001), 169--202, Theta, Bucharest, 2003.

\bibitem{Ivanov} Ivanov V.K.: The algebra of elementary generalized functions. Dokl. Akad. Nauk SSSR 246 (1979), 805--808
(English transl.: Soviet Math. Dokl. 20 (1979), 553--556).

\bibitem{Kal} Kallenberg, O.: Random measures, 3rd Edition. Akad.-Verl., Berlin, 1983


\bibitem{Kingman}  Kingman, J.F.C.: Completely random measures. Pacific J. Math. 21 (1967), 59--78.



\bibitem{Kingman1}   Kingman, J.F.C.: Poisson processes. Oxford Studies in Probability, 3. Oxford Science Publications. The Clarendon Press, Oxford University Press, New York, 1993.

\bibitem{KSSU} Kondratiev, Y.G., da Silva, J.L., Streit, L., Us, G.F.: Analysis on Poisson and gamma spaces. Infin. Dimens. Anal. Quantum Probab. Relat. Top. 1 (1998), 91--117.

\bibitem{KL} Kondratiev, Y.G., Lytvynov, E.W.: Operators of gamma white noise calculus. Infin. Dimens. Anal. Quantum Probab. Relat. Top. 3 (2000),  303--335.



\bibitem{KLV} Kondratiev, Y., Lytvynov, E., Vershik, A.:  Laplace operators on the cone of Radon measures. J. Funct. Anal. 269 (2015),  2947--2976.



\bibitem{Leinaas_Myrheim} Leinass, J.M., Myrheim, J.: On the theory of identical particles. Nuovo Cimento 37 B (1977), 1--23.

\bibitem{LS} Lerda, A.,  Sciuto, S.: Anyons and quantum groups. Nuclear Phys. B 401 (1993),  613--643.

\bibitem{LM2} Liguori, A., Mintchev, M.: Fock spaces with generalized statistics. Lett. Math. Phys. 33 (1995),  283--295.

\bibitem{LM} Liguori, A., Mintchev, M.: Fock representations of quantum fields with generalized statistics. Comm. Math. Phys. 169 (1995),  635--652.

\bibitem{LM3} Liguori, A., Mintchev, M.:  Quantum field theory, bosonization and duality on the half line. Nuclear Phys. B 522 (1998),  345--372.

\bibitem{LMP} Liguori, A., Mintchev, M., Pilo, L.:  Bosonization at finite temperature and anyon condensation. Nuclear Phys. B 569 (2000),  577--605.

\bibitem{LMR0}  Liguori, A., Mintchev, M., Rossi, M.: Anyon quantum fields without a Chern-Simons term. Phys. Lett. B 305 (1993),  52--58.

\bibitem{LMR1}  Liguori, A.,  Mintchev, M.,  Rossi, M.: Unitary group representations in Fock spaces with generalized exchange properties. Lett. Math. Phys. 35 (1995), 163--177.

\bibitem{LMR2} Liguori, A.,  Mintchev, M.,  Rossi, M.: Representations of $\mathcal U_q(\hat A_N)$ in the space of continuous anyons. J. Phys. A 29 (1996),  L493--L498.

\bibitem{LMR3} Liguori, A.,  Mintchev, M., Rossi, M.: Fock representations of exchange algebras with involution. J. Math. Phys. 38 (1997),  2888--2898.


\bibitem{Ly}Lytvynov, E.:  Fermion and boson random point processes as particle distributions of infinite free Fermi and Bose gases of finite density. Rev. Math. Phys. 14 (2002),  1073--1098.

\bibitem{Lyt} Lytvynov, E.:  Polynomials of Meixner's type in infinite dimensionsÑJacobi fields and orthogonality measures. J. Funct. Anal. 200 (2003),  118--149.

\bibitem{LyMei} Lytvynov, E., Mei, L.: On the correlation measure of a family of commuting Hermitian operators with applications to particle densities of the quasi-free representations of the CAR and CCR. J. Funct. Anal. 245 (2007),  62--88.

\bibitem{Shl}  Shlyakhtenko, D.:  Free quasi-free states. Pacific J. Math. 177 (1997),  329--368.

\bibitem{Stern}    Stern, A.:  Anyons and the quantum Hall effect---a pedagogical review. Ann. Physics 323,  204--249  (2008)

\bibitem{TsVY} Tsilevich, N, Vershik, A,  Yor, M.: An infinite-dimensional analogue of the Lebesgue measure and distinguished properties of the gamma process. J. Funct. Anal. 185 (2001), 274--296.


\bibitem{VGG3}  Vershik, A.M.,  Gel'fand, I.M.,  Graev, M.I.:  Representation of $SL(2, R)$, where $R$ is a
ring of functions. Russian Math. Surveys  28 (1973), 83--128. [English translation in
``Representation Theory,'' London Math. Soc. Lecture Note Ser., Vol. 69, pp. 15--60,
Cambridge Univ. Press, Cambridge, UK, 1982.]

\bibitem{VGG2}  Vershik,  A.M.,   Gel'fand, I.M.,  Graev, M.I.:  Commutative model of the representation of the group of flows $SL(2,\mathbf R)^X$ connected with a unipotent subgroup.  Funct. Anal. Appl.  17 (1983), 80--82.


\bibitem{W1} Wilczek, F.: Quantum mechanics of fractional-spin particles.
Phys. Rev. Lett. 49 (1982),  957--959.

\bibitem{W2} Wilczek, F.: Magnetic flux, angular momentum, and statistics. Phys. Rev. Lett. 48 (1982), 1144--1145



\end{thebibliography}
\end{document}